\newcommand*{\arXiv}[1]{\bgroup\color{blue}\href{https://arxiv.org/abs/#1}{arXiv:#1}\egroup}
\newcommand*{\doi}[1]{\bgroup\color{blue}\href{https://dx.doi.org/#1}{doi:#1}\egroup}
\newcommand*{\email}[1]{\bgroup\color{blue}\href{mailto:#1}{#1}\egroup}
\renewcommand*{\url}[1]{\bgroup\color{blue}\href{#1}{#1}\egroup}
\newcommand{\todo}[1]{\bgroup\color{red}\bfseries#1\egroup}
\newcommand*{\ppara}[1]{\noindent\textbf{\textsf{#1}}\,\,}
\newcommand*{\defeq}{\coloneqq}
\newcommand*{\qefed}{\eqqcolon}
\newcommand*{\dhh}{d_{\textup{H}}}
\newcommand*{\Misfit}{\Phi}
\newcommand*{\Naturals}{\mathbb{N}}
\newcommand*{\Normal}{\mathcal{N}}
\newcommand*{\ProbSpace}{\Omega}
\newcommand*{\ProbSimplex}[1]{\mathcal{M}_{1}(#1)}
\newcommand*{\quark}{\setbox0\hbox{$x$}\hbox to\wd0{\hss$\cdot$\hss}}
\newcommand*{\rd}{\mathrm{d}}
\newcommand*{\Reals}{\mathbb{R}}
\newcommand*{\transpose}{\mathtt{T}}
\newcommand*{\UU}{\mathcal{U}}
\newcommand*{\YY}{\mathcal{Y}}
\newcommand*{\VV}{\mathcal{V}}
\newcommand{\EE}{\mathbb{E}}
\newcommand{\Var}{\mathbb{V}}
\newcommand{\R}{\mathbb{R}}
\newcommand{\marginal}{\textup{M}}
\newcommand{\sample}{\textup{S}}
\newcommand*{\odeflow}{F}
\newcommand*{\numflow}{\Psi}
\newcommand*{\tstep}{\tau}
\newcommand*{\norm}[1]{\Vert #1 \Vert}
\newcommand*{\absval}[1]{\vert #1 \vert}
\newcommand*{\bignorm}[1]{\bigl\Vert #1 \bigr\Vert}
\newcommand*{\bigabsval}[1]{\bigl\vert #1 \bigr\vert}
\newcommand*{\Norm}[1]{\left\Vert #1 \right\Vert}
\newcommand*{\Innerprod}[2]{\left\langle #1 , #2 \right\rangle}
\newcommand*{\Absval}[1]{\left\vert #1 \right\vert}
\newtheorem{theorem}{Theorem}[section]
\newtheorem{corollary}[theorem]{Corollary}
\newtheorem{lemma}[theorem]{Lemma}
\newtheorem{proposition}[theorem]{Proposition}
\theoremstyle{definition}
\newtheorem{assumption}[theorem]{Assumption}
\newtheorem{remark}[theorem]{Remark}
\numberwithin{equation}{section}
\begin{document}

\title{Random forward models and log-likelihoods in Bayesian inverse problems}

\author{%
	H.\ C.\ Lie\footnote{Institute of Mathematics, Freie Universit{\"a}t Berlin, Arnimallee 6, 14195 Berlin, Germany, \email{hlie@math.fu-berlin.de}} %
	\and %
	T.\ J.\ Sullivan\footnote{Institute of Mathematics, Freie Universit{\"a}t Berlin, Arnimallee 6, 14195 Berlin, Germany, \email{t.j.sullivan@fu-berlin.de}} \footnote{Zuse Institute Berlin, Takustra{\ss}e 7, 14195 Berlin, Germany, \email{sullivan@zib.de}}%
	\and %
	A.\ L.\ Teckentrup\footnote{School of Mathematics, University of Edinburgh, UK, \email{a.teckentrup@ed.ac.uk}} \footnote{The Alan Turing Institute, 96 Euston Road, London, NW1 2DB, UK}
}

\date{\today}

\maketitle

\begin{abstract}
	\ppara{Abstract:}
	We consider the use of randomised forward models and log-likelihoods within the Bayesian approach to inverse problems.
	Such random approximations to the exact forward model or log-likelihood arise naturally when a computationally expensive model is approximated using a cheaper stochastic surrogate, as in Gaussian process emulation (kriging), or in the field of probabilistic numerical methods.
	We show that the Hellinger distance between the exact and approximate Bayesian posteriors is bounded by moments of the difference between the true and approximate log-likelihoods.
	Example applications of these stability results are given for randomised misfit models in large data applications and the probabilistic solution of ordinary differential equations.
	
	\smallskip
	
	\ppara{Keywords:} Bayesian inverse problem, random likelihood, surrogate model, posterior consistency, uncertainty quantification, randomised misfit, probabilistic numerics.
	
	\smallskip
	
	\ppara{2010 Mathematics Subject Classification:} 
	62F15, 62G08, 65C99, 65D05, 65D30, 65J22, 68W20.
\end{abstract}	

\section{Introduction}
\label{sec:introduction}

Inverse problems are ubiquitous in the applied sciences and in recent years renewed attention has been paid to their mathematical and statistical foundations \citep{EvansStark:2002, KaipioSomersalo:2005, Stuart:2010}.
Questions of well-posedness --- i.e.\ the existence, uniqueness, and stability of solutions --- have been of particular interest for infinite-dimensional/non-parametric inverse problems because of the need to ensure stable and discretisation-independent inferences \citep{LassasSiltanen:2004} and develop algorithms that scale well with respect to high discretisation dimension \citep{CotterRobertsStuartWhite:2013}.

This paper considers the stability of the posterior distribution in a Bayesian inverse problem (BIP) when an accurate but computationally intractable forward model or likelihood is replaced by a random surrogate or emulator.
Such stochastic surrogates arise often in practice.
For example, an expensive forward model such as the solution of a PDE may replaced by a kriging/Gaussian process (GP) model \citep{StuartTeckentrup:2017}.
In the realm of ``big data'' a residual vector of prohibitively high dimension may be randomly subsampled or orthogonally projected onto a randomly-chosen low-dimensional subspace \citep{LeMyersBuiThanhNguyen:2017, NemirovskiJuditskyLanShapiro:2008}.
In the field of probabilistic numerical methods \citep{HennigOsborneGirolami:2015}, a deterministic dynamical system may be solved stochastically, with the stochasticity representing epistemic uncertainty about the behaviour of the system below the temporal or spatial grid scale \citep{Conrad:2016, LieStuartSullivan:2017}.

In each of the above-mentioned settings, the stochasticity in the forward model propagates to associated inverse problems, so that the Bayesian posterior becomes a \emph{random measure}, $\mu_{N}^{\sample}$, which we define precisely in \eqref{eq:random_posterior}.
Alternatively, one may choose to average over the randomness to obtain a \emph{marginal posterior}, $\mu_{N}^{\marginal}$, which we define precisely in \eqref{eq:marginal_posterior}.
It is natural to ask in which sense the approximate posterior (either the random or the marginal version) is close to the ideal posterior of interest, $\mu$.

%\subsection{Outline and contribution of the paper}

In earlier work, \citet{StuartTeckentrup:2017} examined the case in which the random surrogate was a GP.
More precisely, the object subjected to GP emulation was either the forward model (i.e.\ the parameter-to-observation map) or the negative log-likelihood.
The prior GP was assumed to be continuous, and was then conditioned upon finitely many observations (i.e.\ pointwise evaluations) of the parameter-to-observation map or negative log-likelihood as appropriate.
That paper provided error bounds on the Hellinger distance between the BIP's exact posterior distribution and various approximations based on the GP emulator, namely approximations based on the mean of the predictive (i.e.\ conditioned) GP, as well as approximations based on the full GP emulator.
Those results showed that the Hellinger distance between the exact BIP posterior and its approximations can be bounded by moments of the error in the emulator.

In this paper, we extend the analysis of \citet{StuartTeckentrup:2017} to consider more general (i.e.\ non-Gaussian) random approximations to forward models and log-likelihoods, and quantify the impact upon the posterior measure in a BIP.
After establishing some notation in Section~\ref{sec:setup}, we state the main approximation theorems in Section~\ref{sec:general}.
Section~\ref{sec:random-misfit} gives an application of the general theory to random misfit models, in which high-dimensional data are rendered tractable by projection into a randomly-chosen low-dimensional subspace.
Section~\ref{sec:pnode} gives an application to the stochastic numerical solution of deterministic dynamical systems, in which the stochasticity is a device used to represent the impact of numerical discretisation uncertainty.
The proofs of all theorems are deferred to an appendix located after the bibliographic references.

%\todo{Are there any particular results/points that we want to emphasise up here in the introduction? ALT: I'm happy with the introduction as it is.}

\section{Setup and notation}
\label{sec:setup}

\subsection{Spaces of probability measures}

Throughout, $(\ProbSpace, \mathcal{F}, \mathbb{P})$ is a fixed probability space that is rich enough to serve as a common domain for all random variables of interest.

The space of probability measures on the Borel $\sigma$-algebra of a topological space $\UU$ will be denoted by $\ProbSimplex{\UU}$;
in practice, $\UU$ will be a separable Banach space.

When $\mu \in \ProbSimplex{\UU}$, integration of a measurable function (random variable) $f \colon \UU \to \Reals$ will also be denoted by expectation, i.e.\ $\EE_{\mu}[f] \defeq \int_{\UU} f(u) \, \rd \mu (u)$.

The space $\ProbSimplex{\UU}$ will be endowed with the Hellinger metric $\dhh \colon \ProbSimplex{\UU}^{2} \to \Reals_{\geq 0}$:
for $\mu, \nu \in \ProbSimplex{\UU}$ that are both absolutely continuous with respect to a reference measure $\pi$,
\begin{equation}
	\label{eq:Hellinger}
	\dhh(\mu, \nu)^{2}
	\defeq \frac{1}{2} \int_{\UU} \Absval{ \sqrt{ \frac{\rd \mu}{\rd \pi} } - \sqrt{ \frac{\rd \nu}{\rd \pi} } \, }^{2} \, \rd \pi
	= 1 - \int_{\UU} \sqrt{ \frac{\rd \mu}{\rd \pi} \frac{\rd \nu}{\rd \pi} } \, \rd \pi
	= 1 - \EE_{\nu} \biggl[ \sqrt{ \frac{\rd \mu}{\rd \nu} } \, \biggr] .
\end{equation}
The Hellinger distance is in fact independent of the choice of reference measure $\pi$ and defines a metric on $\ProbSimplex{\UU}$ \citep[Lemma 4.7.35--36]{Bogachev:2007} with respect to which $\ProbSimplex{\UU}$ evidently has diameter at most $1$.
The Hellinger topology coincides with the total variation topology \citep{Kraft:1955} and is strictly weaker than the Kullback--Leibler (relative entropy) topology \citep{Pinsker:1964};
all these topologies are strictly stronger than the topology of weak convergence of measures.

As used in Sections~\ref{sec:general}--\ref{sec:pnode}, the Hellinger metric is useful for uncertainty quantification when assessing the similarity of Bayesian posterior probability distributions, since expected values of square-integrable functions are Lipschitz continuous with respect to the Hellinger metric:
\begin{equation}
	\label{eq:Hellinger_bound}
	\bigabsval{ \EE_{\mu}[f] - \EE_{\nu}[f] } \leq 2 \sqrt{ \EE_{\mu} \bigl[ \absval{ f }^{2} \bigr] + \EE_{\nu} \bigl[ \absval{ f }^{2} \bigr] } \, \dhh(\mu, \nu)
\end{equation}
when $f \in L^{2}_{\mu}(\UU) \cap L^{2}_{\nu}(\UU)$.
In particular, for bounded $f$, $\absval{ \EE_{\mu}[f] - \EE_{\nu}[f] } \leq 2 \sqrt{2} \norm{ f }_{\infty} \dhh(\mu, \nu)$.

\subsection{Bayesian inverse problems}

By an \emph{inverse problem} we mean the recovery of $u \in \UU$ from an imperfect observation $y \in \YY$ of $G(u)$, for a known forward operator $G \colon \UU \to \YY$.
In practice, the operator $G$ may arise as the composition $G = O \circ S$ of the solution operator $S \colon \UU \to \mathcal{V}$ of a system of ordinary or partial differential equations with an observation operator $O \colon \mathcal{V} \to \mathcal{Y}$, and it is typically the case that $\YY = \Reals^{J}$ for some $J \in \Naturals$, whereas $\UU$ and $\VV$ can have infinite dimension.
For simplicity, we assume an additive noise model
\begin{equation}
	\label{eq:forward}
	y = G(u) + \eta ,
\end{equation}
where the statistics but not the realisation of $\eta$ are known.
In the strict sense, this inverse problem is ill-posed in the sense that there may be no element $u \in \UU$ for which $G(u) = y$, or there may be multiple such $u$ that are highly sensitive to the observed data $y$.

The Bayesian perspective eases these problems by interpreting $u$, $y$, and $\eta$ all as random variables or fields.
Through knowledge of the distribution of $\eta$, \eqref{eq:forward} defines the conditional distribution of $y|u$.
After positing a prior probability distribution $\mu_{0} \in \ProbSimplex{\UU}$ for $u$, the Bayesian solution to the inverse problem is nothing other than the posterior distribution for the conditioned random variable $u|y$.
This posterior measure, which we denote $\mu^{y} \in \ProbSimplex{\UU}$, is from the Bayesian point of view the proper synthesis of the prior information in $\mu_{0}$ with the observed data $y$.
The same posterior $\mu^{y}$ can also be arrived at via the minimisation of penalised Kullback--Leibler, $\chi^{2}$, or Dirichlet energies \citep{DupuisEllis:1997, JordanKinderlehrer:1996, OhtaTakatsu:2011}, where the penalisation again expresses compromise between fidelity to the prior and fidelity to the data.

The rigorous formulation of Bayes' formula for this context requires careful treatment and some further notation \citep{Stuart:2010}.
The pair $(u, y)$ is assumed to be a well-defined random variable with values in $\UU \times \YY$.
The marginal distribution of $u$ is the Bayesian prior $\mu_{0} \in \ProbSimplex{\UU}$.
The observational noise $\eta$ is distributed according to $\mathbb{Q}_{0} \in \ProbSimplex{\YY}$, independently of $u$.
The random variable $y|u$ is distributed according to $\mathbb{Q}_{u}$, the translate of $\mathbb{Q}_{0}$ by $G(u)$, which is assumed to be absolutely continuous with respect to $\mathbb{Q}_{0}$, with
\[
	\frac{\rd \mathbb{Q}_{u}}{\rd \mathbb{Q}_{0}} (y) \propto \exp(-\Misfit(u; y)) .
\]
The function $\Misfit \colon \UU \times \YY \to \Reals$ is called the \emph{negative log-likelihood} or simply \emph{potential}.
In the elementary setting of centred Gaussian noise, $\eta \sim \Normal(0, \Gamma)$ on $\YY = \Reals^{J}$, the potential is the non-negative quadratic misfit\footnote{Hereafter, to reduce notational clutter, we write both $\norm{ \quark }_{\UU}$ and $\norm{ \quark }_{\YY}$ as $\norm{ \quark }$.} $\Misfit(u; y) = \tfrac{1}{2} \bignorm{ \Gamma^{-1/2} ( y - G(u) ) }_{\YY}^{2}$.
However, particularly for cases in which $\dim \YY = \infty$, it may be necessary to allow $\Misfit$ to take negative values and even to be unbounded below \citep[Remark~3.8]{Stuart:2010}.
%%\todo{HL: Is there a reference to support the last sentence?}
%%TJS fixed this

%Let $\nu_{0}$ denote the product measure of $\mu_{0}$ and $\mathbb{Q}_{0}$.
%% HCL: I commented this out because $\nu_0$ is used only once hereafter, and because later $\nu_N$ is used to denote a different type of measure (a probability measure only on $\Omega$).
With this notation, Bayes' theorem is then as follows \citep[Theorem~3.4]{DashtiStuart:2016}:

\begin{theorem}[Generalised Bayesian formula]
	\label{thm:Bayes}
	Suppose that $\Misfit \colon \UU \times \YY \to \Reals$ is $\mu_{0}\otimes \mathbb{Q}_0$-measurable and that
	\[
		Z(y) \defeq \EE_{\mu_{0}} \bigl[ \exp(- \Misfit(u; y)) \bigr]
	\]
	satisfies $0 < Z(y) < \infty$ for $\mathbb{Q}_{0}$-almost all $y \in \YY$.
	Then, for such $y$, the conditional distribution $\mu^{y}$ of $u|y$ exists and is absolutely continuous with respect to $\mu_{0}$ with density
	\begin{equation}
		\label{eq:rad_nik}
		\frac{\rd \mu^{y}}{\rd \mu_{0}} (u) = \frac{\exp(- \Misfit(u; y))}{Z(y)} .
	\end{equation}
\end{theorem}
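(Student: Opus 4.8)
The plan is to construct $\mu^{y}$ explicitly as a version of the conditional distribution of $u$ given $y$; exhibiting it in this way establishes at once its existence and the claimed absolute continuity with respect to $\mu_{0}$. First I would assemble the joint law $\nu \in \ProbSimplex{\UU\times\YY}$ of the pair $(u,y)$ and compare it with the product reference measure $\nu_{0} \defeq \mu_{0}\otimes\mathbb{Q}_{0}$. Writing the assumed proportionality as $\frac{\rd\mathbb{Q}_{u}}{\rd\mathbb{Q}_{0}}(y) = c(y)\exp(-\Misfit(u;y))$ with a factor $c$ depending on $y$ alone, the $\mu_{0}\otimes\mathbb{Q}_{0}$-measurability of $\Misfit$ guarantees that $(u,y)\mapsto c(y)\exp(-\Misfit(u;y))$ is a bona fide measurable function on the product space; a routine computation with the definition of conditional distributions (the $\UU$-marginal of $\nu$ is $\mu_{0}$ and the conditional law of $y\mid u$ is $\mathbb{Q}_{u}\ll\mathbb{Q}_{0}$) together with Fubini's theorem then yields $\nu\ll\nu_{0}$ with $\frac{\rd\nu}{\rd\nu_{0}}(u,y) = c(y)\exp(-\Misfit(u;y))$. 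Integrating this in $u$, the $\YY$-marginal $\nu_{\YY}$ of $\nu$ has $\mathbb{Q}_{0}$-density $\frac{\rd\nu_{\YY}}{\rd\mathbb{Q}_{0}}(y) = c(y)\,Z(y)$, which by hypothesis is finite and strictly positive for $\mathbb{Q}_{0}$-a.e.\ $y$; hence $\nu_{\YY}$ and $\mathbb{Q}_{0}$ are mutually absolutely continuous, and it suffices to work on the full-measure set $\YY' \defeq \{ y : 0 < Z(y) < \infty \}$, on which $Z$ is a measurable function.

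Next I would verify that the family $(\mu^{y})_{y\in\YY}$ --- defined by \eqref{eq:rad_nik} for $y\in\YY'$ and set, say, equal to $\mu_{0}$ otherwise --- is a regular conditional distribution of $u$ given $y$. For $y\in\YY'$ the density $u\mapsto\exp(-\Misfit(u;y))/Z(y)$ is nonnegative and $\mu_{0}$-integrable with integral $Z(y)/Z(y)=1$, so $\mu^{y}\in\ProbSimplex{\UU}$, absolutely continuous with respect to $\mu_{0}$ with precisely the density appearing in \eqref{eq:rad_nik}; moreover $y\mapsto\mu^{y}(A)$ is Borel measurable for every Borel $A\subseteq\UU$, by joint measurability of $\Misfit$, Tonelli's theorem, and measurability of $Z$. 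It remains to verify the disintegration identity. Combining the density formulas for $\frac{\rd\nu}{\rd\nu_{0}}$ and $\frac{\rd\nu_{\YY}}{\rd\mathbb{Q}_{0}}$ gives, for $\mathbb{Q}_{0}$-a.e.\ $y$ and $\mu_{0}$-a.e.\ $u$,
\[
	\frac{\rd\mathbb{Q}_{u}}{\rd\mathbb{Q}_{0}}(y) = \frac{\rd\nu_{\YY}}{\rd\mathbb{Q}_{0}}(y)\,\frac{\rd\mu^{y}}{\rd\mu_{0}}(u),
\]
the $u$-independent factor $c(y)$ having cancelled; hence for every bounded Borel $g\colon\UU\times\YY\to\Reals$,
\begin{align*}
	\EE_{\nu}[g]
	&= \int_{\YY}\!\int_{\UU} g(u,y)\,\frac{\rd\mathbb{Q}_{u}}{\rd\mathbb{Q}_{0}}(y)\,\rd\mu_{0}(u)\,\rd\mathbb{Q}_{0}(y) \\
	&= \int_{\YY}\!\left(\int_{\UU} g(u,y)\,\rd\mu^{y}(u)\right)\rd\nu_{\YY}(y),
\end{align*}
which is exactly the statement that $\mu^{y}$ is a conditional law of $u$ given $y$; uniqueness of conditional distributions up to $\nu_{\YY}$-null --- equivalently $\mathbb{Q}_{0}$-null --- sets then identifies this $\mu^{y}$ with the Bayesian posterior.

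I expect the work to be bookkeeping rather than anything conceptually deep, and the one genuinely load-bearing point is the structure of the normalisation in $\frac{\rd\mathbb{Q}_{u}}{\rd\mathbb{Q}_{0}}(y)\propto\exp(-\Misfit(u;y))$: the proportionality factor must be a function of $y$ alone. This is what holds in the additive-noise model \eqref{eq:forward} --- for Gaussian noise the factor is a constant multiple of $\exp(\tfrac{1}{2}\norm{\Gamma^{-1/2}y}^{2})$, and in general it is essentially the reciprocal of the noise ``density'' at $y$ --- and it is precisely this property that lets the factor cancel between $\frac{\rd\mathbb{Q}_{u}}{\rd\mathbb{Q}_{0}}$ and $\frac{\rd\nu_{\YY}}{\rd\mathbb{Q}_{0}}$; were the factor to depend on $u$, the unnormalised posterior density would carry it along and would not reduce to the clean expression in \eqref{eq:rad_nik}. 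A secondary point requiring care is ensuring that $\frac{\rd\mathbb{Q}_{u}}{\rd\mathbb{Q}_{0}}(y)$ can be treated throughout as a jointly measurable function of $(u,y)$ --- which is the precise purpose of the hypothesis that $\Misfit$ be $\mu_{0}\otimes\mathbb{Q}_{0}$-measurable --- so that $Z$ and $\nu_{\YY}$ are well defined and the applications of Fubini--Tonelli are legitimate, the latter being justified by the boundedness of $g$ together with $Z(y)<\infty$ for $\mathbb{Q}_{0}$-a.e.\ $y$.
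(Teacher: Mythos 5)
The paper does not actually prove Theorem~\ref{thm:Bayes}: it quotes the result as Theorem~3.4 of \citet{DashtiStuart:2016}, so there is no in-paper proof to compare against. Your argument is correct and is essentially the standard one from that source — you re-derive, in this special case, the abstract conditioning lemma used there: compute $\rd\nu/\rd(\mu_{0}\otimes\mathbb{Q}_{0})$, integrate out $u$ to obtain the marginal density $c(y)Z(y)$, and verify the disintegration identity directly for the explicitly constructed family $(\mu^{y})_{y}$, which has the virtue of sidestepping any appeal to abstract existence theorems for regular conditional probabilities. You also correctly isolate the load-bearing convention that the proportionality factor in $\rd\mathbb{Q}_{u}/\rd\mathbb{Q}_{0}\propto\exp(-\Misfit(u;y))$ is a function of $y$ alone. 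The one over-claim is that $\nu_{\YY}$ and $\mathbb{Q}_{0}$ are \emph{mutually} absolutely continuous: this needs $c(y)>0$ for $\mathbb{Q}_{0}$-almost every $y$, which is not among the stated hypotheses (though it does hold in the additive-noise models the paper has in mind). Fortunately only the direction $\nu_{\YY}\ll\mathbb{Q}_{0}$ is used in your argument — it makes the exceptional set $\{y: Z(y)\notin(0,\infty)\}$ a $\nu_{\YY}$-null set — and on any residual $\mathbb{Q}_{0}$-positive but $\nu_{\YY}$-null set the conditional law is unconstrained, with \eqref{eq:rad_nik} still defining a valid version there, so the conclusion is unaffected.
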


Note that, for \eqref{eq:rad_nik} to make sense, it is essential to check that $0 < Z(y) < \infty$.
Hereafter, to save space, we regard the data $y$ as fixed, and hence write $\Misfit(u)$ in place of $\Misfit(u; y)$, $Z$ in place of $Z(y)$, and $\mu$ in place of $\mu^{y}$.
In particular, we shall redefine the negative log-likelihood as a function $\Misfit \colon \UU \to \Reals$, instead of a function $\Misfit \colon \UU\times \YY\to\Reals$ as in Theorem~\ref{thm:Bayes} above.

From the perspective of numerical analysis, it is natural to ask about the well-posedness of the Bayesian posterior $\mu$:
is it stable when the prior $\mu_{0}$, the potential $\Misfit$, or the observed data $y$ are slightly perturbed, e.g.\ due to discretisation, truncation, or other numerical errors?
For example, what is the impact of using an approximate numerical forward operator $G_{N}$ in place of $G$, and hence an approximate $\Misfit_{N} \colon \UU \to \Reals$ in place of $\Misfit$?
Here, we quantify stability in the Hellinger metric $\dhh$ from \eqref{eq:Hellinger}.

Stability of the posterior with respect to the observed data $y$ and the log-likelihood $\Misfit$ was established for Gaussian priors by \citet{Stuart:2010} and for more general priors by many later contributions \citep{DashtiHarrisStuart:2012, Hosseini:2017, HosseiniNigam:2017, Sullivan:2017}.
(We note in passing that the stability of BIPs with respect to perturbation of the prior is possible but much harder to establish, particularly when the data $y$ are highly informative and the normalisation constant $Z(y)$ is close to zero;
see e.g.\ the ``brittleness'' phenomenon of \citep{OwhadiScovel:2017, OwhadiScovelSullivan:2015}.)
Typical approximation theorems for the replacement of the potential $\Misfit$ by a deterministic approximate potential $\Misfit_{N}$, leading to an approximate posterior $\mu_{N}$, aim to transfer the convergence rate of the forward problem to the inverse problem, i.e.\ to prove an implication of the form
\[
	\bigabsval{ \Misfit(u) - \Misfit_{N}(u) } \leq M(\norm{ u }) \psi(N) \implies \dhh \bigl( \mu, \mu_{N} \bigr) \leq C \psi(N) ,
\]
where $M \colon \Reals_{\geq 0} \to \Reals_{\geq 0}$ is suitably well behaved, $\psi \colon \Naturals \to \Reals_{\geq 0}$ quantifies the convergence rate of the forward problem, and $C$ is a constant.
Following \citet{StuartTeckentrup:2017}, the purpose of this article is to extend this paradigm and these approximation results to the case in which the approximation $\Misfit_{N}$ is a \emph{random} object.

\section{Well-posed Bayesian inverse problems with random likelihoods}
\label{sec:general}

In many practical applications, the negative log-likelihood $\Misfit$ is computationally too expensive or impossible to evaluate exactly;
one therefore often uses an approximation $\Misfit_{N}$ of $\Misfit$.
This leads to an approximation $\mu_{N}$ of the exact posterior $\mu$, and a key desideratum is convergence, in a suitable sense, of $\mu_{N}$ to $\mu$ as the approximation error $\Misfit_{N} - \Misfit$ in the potential tends to zero.

The focus of this work is on random approximations $\Misfit_{N}$.
One particular example of such random approximations are the GP emulators analysed in \citet{StuartTeckentrup:2017};
other examples include the randomised misfit models in Section~\ref{sec:random-misfit} and the probabilistic numerical methods in Section~\ref{sec:pnode}.
The present section extends the analysis of \citet{StuartTeckentrup:2017} from the case of GP approximations of forward models or log-likelihoods to more general non-Gaussian approximations.
In doing so, more precise conditions are obtained for the exact Bayesian posterior to be well approximated by its random counterpart.

Let now $\Misfit_{N} \colon \Omega \times \UU \to \R$ be a measurable function that provides a random approximation to $\Misfit \colon \UU \to \R$, where we recall that we have fixed the data $y$.
Let $\nu_{N}$ be a probability measure on $\Omega$ such that the distribution of the inputs of $\Misfit_{N}$ is given by $\nu_{N} \otimes \mu_0$;
we sometimes abuse notation and think of $\Misfit_{N}$ itself as being $\nu_{N}$-distributed.
We assume throughout that the randomness in the approximation $\Misfit_{N}$ of $\Misfit$ is independent of the randomness in the parameters being inferred.

Replacing $\Misfit$ by $\Misfit_{N}$ in \eqref{eq:rad_nik}, we obtain the \emph{sample approximation} $\mu_{N}^{\sample}$, the random measure given by
\begin{align}
	\label{eq:random_posterior}
	\frac{\rd \mu_{N}^{\sample}}{\rd \mu_{0}}(\omega, u) & \defeq \frac{\exp( -\Misfit_{N}(\omega, u) )}{Z_{N}^{\sample}} , \\
	Z_{N}^{\sample} (\omega) & \defeq \EE_{\mu_{0}} \bigl[ \exp( -\Misfit_{N}(\omega, \quark) ) \bigr]=\int_{\UU}\exp(-\Misfit_{N}(\omega, u')) \, \rd \mu_0(u') .
	\notag
\end{align}
(Henceforth, we will omit the $\omega$ argument for brevity.)
Thus, the measure $\mu$ is approximated by the random measure $\mu_{N}^{\sample} \colon \Omega \to \ProbSimplex{\UU}$, and the normalisation constant $Z_{N}^{\sample} \colon \Omega\to\Reals$ is a random variable.
A deterministic approximation of the posterior distribution $\mu$ can now be obtained either by fixing $\omega$, i.e.\ by taking one particular realisation of the random posterior $\mu_{N}^{\sample}$, or by taking the expected value of the random likelihood $\exp( -\Misfit_{N}(u) )$, i.e.\ by averaging over different realisations of $\mu_{N}^{\sample}$.
This yields the \emph{marginal approximation} $\mu_{N}^{\marginal}$ defined by
\begin{align}
	\label{eq:marginal_posterior}
	\frac{\rd \mu_{N}^{\marginal}}{\rd \mu_{0}}(u) & \defeq \frac{\EE_{\nu_{N}} \bigl[ \exp ( -\Misfit_{N} (u) ) \bigr]}{\EE_{\nu_{N}} \bigl[ Z_{N}^{\sample} \bigr]} ,
\end{align}
where $\EE_{\nu_{N}}[Z_{N}^{\sample}]=\int_{\Omega}Z_{N}^{\sample}(\omega) \, \rd \nu_{N}(\omega)$.
We note that an alternative averaged, deterministic approximation can be obtained by taking the expected value of the density $(Z_{N}^{\sample})^{-1} e^{-\Misfit_{N}(u)}$ in \eqref{eq:random_posterior} as a whole, i.e.\ by taking the expected value of the ratio rather than the ratio of expected values.
A result very similar to Theorem \ref{thm:hell_conv_marg}, with slightly modified assumptions, holds also in this case, with the proof following the same steps.
However, the marginal approximation presented here appears more intuitive and more amenable to applications.
Firstly, the marginal approximation provides a clear interpretation as the posterior distribution obtained by the approximation of the true data likelihood $\exp(-\Misfit(u))$ by $\EE_{\nu_{N}} \bigl[ \exp ( -\Misfit_{N} (u) ) \bigr]$.
Secondly, the marginal approximation is more amenable to sampling methods such as Markov chain Monte Carlo, with clear connections to the pseudo-marginal approach \citep{AndrieuRoberts:2009, Beaumont:2003}.
%\todo{We should justify why we take the expectation with respect to $\nu_{N}$ separately in the top and bottom of the fraction on the RHS, rather than averaging the whole thing.}

%We make the following assumptions.
%
%\begin{assumption}\label{ass:phin} The random potential $\Misfit_{N}$ satisfies one of the following sets of assumptions:
%\begin{itemize}
%\item[(i)] $\sup_{u \in \UU} \EE_{\nu_{N}} [\exp(- \Misfit_{N}(u))]^{-1} \leq C_1$, for a constant $C_1$ independent of $N$.
%\item[(ii)] $\sup_{u \in \UU} \EE_{\nu_{N}} [\exp(- \Misfit_{N}(u))^2] \leq C_2$, for a constant $C_2$ independent of $N$.
%\item[(iii)] $C_3 \leq \EE_{\nu_{N}} [ \int_\UU \exp(- \Misfit_{N}(u)) \, \rd \mu_{0} (u) ] \leq C_4$, for constants $C_3, C_4$ independent of $N$.
%\end{itemize}
%\end{assumption}

\subsection{Random misfit models}

This section considers the general setting in which the deterministic potential $\Misfit$ is approximated by a random potential $\Misfit_{N} \sim \nu_{N}$.
Recall from \eqref{eq:rad_nik} that $Z$ is the normalisation constant of $\mu$, and that for $\mu$ to be well-defined, we must have that $0<Z<\infty$.
The following two results, Theorems~\ref{thm:hell_conv_marg} and \ref{thm:hell_conv_rand}, extend Theorems~4.9 and 4.11 respectively of \citet{StuartTeckentrup:2017}, in which the approximation is a GP model:

\begin{theorem}[Deterministic convergence of the marginal posterior]
	\label{thm:hell_conv_marg}
	Suppose that there exist scalars $C_1, C_2, C_3 \geq 0$, independent of $N$, such that, for the H\"{o}lder-conjugate exponent pairs $(p_1,p_1')$, $(p_2,p_2')$, and $(p_3,p_3')$, we have
	\begin{compactenum}[(a)]
		\item \label{item:hell_conv_marg_a} $\min \left\{ \bignorm{ \EE_{\nu_{N}} [\exp(- \Misfit_{N})]^{-1} }_{L^{p_1}_{\mu_{0}}(\UU)}, \bignorm{ \exp( \Misfit) }_{L^{p_1}_{\mu_{0}}(\UU)} \right\} \leq C_1(p_1)$;
		%\todo{Min is fine here. I'm using that for $x,y\geq 0$, we have $(x+y)^{-1} \leq \min(x^{-1}, y^{-1})$, which is where the min comes from. Can switch to sum if you prefer}
		\item \label{item:hell_conv_marg_b} $\Norm{ \EE_{\nu_{N}} \Big[ \big(\exp( -\Misfit ) + \exp ( -\Misfit_{N} ) \big)^{p_2} \Big]^{1/p_2} }_{L^{2p_1'p_3}_{\mu_{0}}(\UU)} \leq C_2(p_1,p_2,p_3)$;
		\item \label{item:hell_conv_marg_c} $C_3^{-1} \leq \EE_{\nu_{N}} [Z_{N}^{\sample}] \leq C_3$.
	\end{compactenum}
	Then there exists $C=C(C_1,C_2,C_3,Z)>0$, independent of $N$, such that 
	\begin{subequations}
		\label{eq:hell_conv_marg_result}
		\begin{align}
			\label{eq:hell_conv_marg_result_inequality}
			\dhh \bigl( \mu, \mu_{N}^{\marginal} \bigr) &\leq C \Norm{ \EE_{\nu_{N}} \bigl[ \absval{ \Misfit - \Misfit_{N} }^{p_2'} \bigr]^{1/p_2'} }_{L^{2p_1' p_3'}_{\mu_{0}}(\UU)},
			\\
			\label{eq:hell_conv_marg_result_constant_C}
		 	C(C_1,C_2,C_3,Z)&=\left(\frac{C_1(p_1)}{Z}+ C_3\max \left\{ Z^{-3}, C_3^3 \right\} \right) C_2^2(p_1,p_2,p_3).
		\end{align}
	\end{subequations}
\end{theorem}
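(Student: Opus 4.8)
The plan is to take $\mu_{0}$ as the common reference measure in \eqref{eq:Hellinger}, writing $\bar{L}_{N}(u) \defeq \EE_{\nu_{N}}[\exp(-\Misfit_{N}(u))]$ for the averaged likelihood. Tonelli's theorem (the integrand is non-negative) gives $\EE_{\nu_{N}}[Z_{N}^{\sample}] = \int_{\UU} \bar{L}_{N} \,\rd\mu_{0}$, which by~(c) lies in $[C_{3}^{-1}, C_{3}]$; in particular $\bar{L}_{N}$ is $\mu_{0}$-a.e.\ finite and $\mu_{N}^{\marginal}$ is a well-defined probability measure with $\rd\mu_{N}^{\marginal}/\rd\mu_{0} = \bar{L}_{N}/\EE_{\nu_{N}}[Z_{N}^{\sample}]$, while $\rd\mu/\rd\mu_{0} = e^{-\Misfit}/Z$ and $0 < Z < \infty$. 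Adding and subtracting $\sqrt{\bar{L}_{N}}/\sqrt{Z}$ inside the Hellinger integral and using $(a+b)^{2} \leq 2a^{2} + 2b^{2}$ yields the familiar split into an ``integrand'' and a ``normalisation'' term,
\[
  \dhh\bigl( \mu, \mu_{N}^{\marginal} \bigr)^{2} \leq \frac{1}{Z}\int_{\UU} \bigl( \sqrt{e^{-\Misfit}} - \sqrt{\bar{L}_{N}} \,\bigr)^{2} \rd\mu_{0} + \frac{1}{Z}\bigl( \sqrt{ \EE_{\nu_{N}}[Z_{N}^{\sample}] } - \sqrt{Z} \,\bigr)^{2} ,
\]
so it remains to control these two quantities.

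The crux is the deterministic, pointwise-in-$u$ estimate
\[
  \bigabsval{ e^{-\Misfit(u)} - \bar{L}_{N}(u) } \leq F(u)\, g(u), \qquad F \defeq \EE_{\nu_{N}}\bigl[ (e^{-\Misfit} + e^{-\Misfit_{N}})^{p_{2}} \bigr]^{1/p_{2}}, \qquad g \defeq \EE_{\nu_{N}}\bigl[ \absval{ \Misfit - \Misfit_{N} }^{p_{2}'} \bigr]^{1/p_{2}'} .
\]
I would prove it by writing $e^{-\Misfit} - \bar{L}_{N} = \EE_{\nu_{N}}[ e^{-\Misfit} - e^{-\Misfit_{N}} ]$, pulling the expectation inside the modulus, using the elementary inequality $\absval{ e^{-a} - e^{-b} } \leq (e^{-a} + e^{-b})\,\absval{ a - b }$, and applying H\"{o}lder's inequality on $(\ProbSpace, \nu_{N})$ with the conjugate pair $(p_{2}, p_{2}')$; squaring gives $(e^{-\Misfit} - \bar{L}_{N})^{2} \leq F^{2} g^{2}$. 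Here $g$ is exactly the function whose $\mu_{0}$-norm appears on the right of \eqref{eq:hell_conv_marg_result_inequality}, and $F$ is the one controlled by~(b).

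For the integrand term I would use $(\sqrt{x} - \sqrt{y})^{2} = (x-y)^{2}/(\sqrt{x}+\sqrt{y})^{2} \leq (x-y)^{2}\min\{ x^{-1}, y^{-1} \}$, so that $(\sqrt{e^{-\Misfit}} - \sqrt{\bar{L}_{N}})^{2} \leq \min\{ e^{\Misfit}, \bar{L}_{N}^{-1} \}\, F^{2} g^{2}$; this is precisely where the minimum in~(a) enters, since one may keep whichever of $e^{\Misfit}$, $\bar{L}_{N}^{-1}$ has finite $L^{p_{1}}_{\mu_{0}}$-norm. Integrating against $\mu_{0}$ and applying H\"{o}lder with the three exponents $p_{1}$, $p_{1}'p_{3}$, $p_{1}'p_{3}'$ (conjugate since $\tfrac{1}{p_{1}} + \tfrac{1}{p_{1}'}(\tfrac{1}{p_{3}} + \tfrac{1}{p_{3}'}) = 1$), together with $\norm{ F^{2} }_{L^{p_{1}'p_{3}}_{\mu_{0}}} = \norm{ F }_{L^{2p_{1}'p_{3}}_{\mu_{0}}}^{2}$ and $\norm{ g^{2} }_{L^{p_{1}'p_{3}'}_{\mu_{0}}} = \norm{ g }_{L^{2p_{1}'p_{3}'}_{\mu_{0}}}^{2}$, bounds it by a multiple of $C_{1} C_{2}^{2} \norm{ g }_{L^{2p_{1}'p_{3}'}_{\mu_{0}}}^{2}$ via~(a) and~(b). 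For the normalisation term, $\Absval{ \EE_{\nu_{N}}[Z_{N}^{\sample}] - Z } \leq \int_{\UU} \bigabsval{ e^{-\Misfit} - \bar{L}_{N} } \rd\mu_{0} \leq \int_{\UU} F g \,\rd\mu_{0} \leq \norm{ F }_{L^{2p_{1}'p_{3}}_{\mu_{0}}} \norm{ g }_{L^{2p_{1}'p_{3}'}_{\mu_{0}}}$ (H\"{o}lder once more, using that $\mu_{0}$ is a probability measure and $2p_{1}' \geq 2$), and $(\sqrt{\EE_{\nu_{N}}[Z_{N}^{\sample}]} - \sqrt{Z})^{2} \leq (\EE_{\nu_{N}}[Z_{N}^{\sample}] - Z)^{2}/\max\{ \EE_{\nu_{N}}[Z_{N}^{\sample}], Z \}$, which~(c) converts into a constant multiple of $C_{2}^{2}\norm{ g }^{2}$ as well, the $Z$- and $C_{3}$-dependence collecting into the $Z^{-1}$ and $C_{3}\max\{ Z^{-3}, C_{3}^{3} \}$ factors of \eqref{eq:hell_conv_marg_result_constant_C}. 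Adding the two contributions and taking square roots yields the claimed bound after the routine bookkeeping of constants.

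I expect the main difficulty to be twofold. First, there is the bookkeeping itself: arranging the nested H\"{o}lder inequalities so that the exponents and the weighted norms $L^{2p_{1}'p_{3}}_{\mu_{0}}$, $L^{2p_{1}'p_{3}'}_{\mu_{0}}$ match hypotheses~(a)--(c) exactly, and tracking the $Z$, $C_{1}$, $C_{2}$, $C_{3}$ dependence through to the stated constant. Second --- and this is what is genuinely new relative to the deterministic and the Gaussian-process settings --- the density of $\mu_{N}^{\marginal}$ is proportional to the \emph{average} likelihood $\bar{L}_{N} = \EE_{\nu_{N}}[e^{-\Misfit_{N}}]$ rather than to $e^{-\Misfit_{N}}$ itself, so the deterministic stability lemma does not apply directly; one must instead bound $\sqrt{\bar{L}_{N}}$ from below (equivalently, $e^{\Misfit}$ from above) $\mu_{0}$-almost everywhere, which is exactly why~(a) is stated as a minimum, and why handling the square root via $(\sqrt{x} - \sqrt{y})^{2} \leq (x-y)^{2}\min\{x^{-1},y^{-1}\}$ rather than the cruder $(\sqrt{x} - \sqrt{y})^{2} \leq \absval{ x - y }$ is essential for a bound that is linear, rather than merely square-root, in the likelihood error $\Misfit - \Misfit_{N}$.
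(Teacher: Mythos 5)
Your proposal is correct and follows essentially the same route as the paper's proof: the same split into an integrand term and a normalisation term, the same pointwise bound $\absval{e^{-\Misfit}-\EE_{\nu_N}[e^{-\Misfit_N}]}\leq Fg$ via $\absval{e^{-a}-e^{-b}}\leq(e^{-a}+e^{-b})\absval{a-b}$ and H\"older in $\nu_N$, the same use of $(\sqrt{x}-\sqrt{y})^2\leq(x-y)^2/(x+y)$ to bring in hypothesis (a), and the same nested H\"older in $\mu_0$ to bring in (b). The only cosmetic differences are that you apply a single three-exponent H\"older step where the paper applies two-exponent H\"older twice, and you simplify the normalisation term via the identity $\EE_{\nu_N}[Z_N^{\sample}](Z^{-1/2}-\EE_{\nu_N}[Z_N^{\sample}]^{-1/2})^2=Z^{-1}(\sqrt{\EE_{\nu_N}[Z_N^{\sample}]}-\sqrt{Z})^2$; both yield the stated constant.
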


In the proof of Theorem~\ref{thm:hell_conv_marg}, we show that hypothesis (\ref{item:hell_conv_marg_a}) arises as an upper bound on the quantity $\norm{(e^{-\Misfit}+\EE_{\nu_{N}}[e^{-\Misfit_{N}}])^{-1}}_{L^{p_1}_{\mu_0}(\UU)}$.
In order for the conclusion of Theorem~\ref{thm:hell_conv_marg} to hold, we need the latter to be finite.
Thus, hypothesis (\ref{item:hell_conv_marg_a}) is an exponential decay condition on the \emph{positive} tails of either $\Misfit$ or $\Misfit_{N}$, with respect to the appropriate measures.
Alternatively, by applying Jensen's inequality to $\EE_{\nu_{N}}[e^{-\Misfit_{N}}]^{-1}$, one can strengthen hypothesis (\ref{item:hell_conv_marg_a}) into the hypothesis of exponential integrability of either $\Misfit$ with respect to $\mu_0$ or $\Misfit_{N}$ with respect to $\nu_{N} \otimes \mu_{0}$; this yields the same interpretation.
Thus, the parameter $p_1$ quantifies the exponential decay of the positive tail of either $\Misfit$ or $\Misfit_{N}$.

By comparing the quantity $\norm{(e^{-\Misfit}+\EE_{\nu_{N}}[e^{-\Misfit_{N}}])^{-1}}_{L^{p_1}_{\mu_0}(\UU)}$ from hypothesis (\ref{item:hell_conv_marg_a}) with the quantity in hypothesis (\ref{item:hell_conv_marg_b}), it follows that hypothesis (\ref{item:hell_conv_marg_b}) is an exponential decay condition on the \emph{negative} tails of both $\Misfit$ and $\Misfit_{N}$.
The two new parameters in this decay condition arise because we apply H\"{o}lder's inequality twice in order to develop the desired bound \eqref{eq:hell_conv_marg_result} on $\dhh(\mu,\mu_{N}^{\marginal})$.
The key desideratum here is that the bound is \emph{multiplicative} in some $L^{p'}_{\mu_0}(\UU)$-norm of $\EE_{\nu_N}[ \absval{ \Misfit-\Misfit_{N} }^{p_2'}]^{1/p_2'}$.
The two new parameters $p_2$ and $p_1'p_3$ quantify the decay with respect to $\nu_{N}$ and $\mu_0$ respectively.
Note that the interaction between the hypotheses (\ref{item:hell_conv_marg_a}) and (\ref{item:hell_conv_marg_b}) as described by the conjugate exponent pair $(p_1,p_1')$ implies that one can trade off faster exponential decay of one tail with slower exponential decay of the other.

The two-sided condition on $\EE_{\nu_N}[Z_{N}^{\sample}]$ in hypothesis (\ref{item:hell_conv_marg_c}) ensures that both tails of $\Misfit_{N}$ with respect to $\nu_{N} \otimes \mu_0$ decay sufficiently quickly.
This hypothesis ensures that the Radon--Nikodym derivative in \eqref{eq:marginal_posterior} is well-defined.

Finally, we note that the quantity on the right hand side of \eqref{eq:hell_conv_marg_result_inequality} depends directly on the conjugate exponents of $p_1, p_2$ and $p_3$ appearing in hypotheses (\ref{item:hell_conv_marg_a}) and (\ref{item:hell_conv_marg_b}).
The more well behaved the quantities in these hypotheses are, the weaker the norm we can choose on the right hand side of \eqref{eq:hell_conv_marg_result_inequality}.

\begin{theorem}[Mean-square convergence of the sample posterior]
	\label{thm:hell_conv_rand}
	Suppose that there exist scalars $D_1, D_2 \geq 0$, independent of $N$, such that, for H\"{o}lder-conjugate exponent pairs $(q_1,q_1')$ and $(q_2,q_2')$, we have
	\begin{compactenum}[(a)]
		\item \label{item:hell_conv_rand_a} $\Norm{ \EE_{\nu_{N}} \Big[ \big( e^{-\Misfit/2} + e^{- \Misfit_{N}/2} \big)^{2q_1}\Big]^{1/q_1} }_{L^{q_2}_{\mu_{0}}(\UU)} \leq D_1(q_1,q_2)$;
		\item \label{item:hell_conv_rand_b} $\Norm{ \EE_{\nu_{N}} \left[ \left(Z_{N}^{\sample} \max\left\{Z^{-3},(Z_{N}^{\sample})^{-3}\right\} \left( e^{- \Misfit} + e^{-\Misfit_{N}} \right)^{2} \right)^{q_1}\right]^{1/q_1} }_{L^{q_2}_{\mu_{0}}(\UU)} \leq D_2(q_1,q_2)$.
	\end{compactenum}
	Then 
	\begin{align}
	 	\label{eq:hell_conv_rand_result}
		\EE_{\nu_{N}} \left[ \dhh \bigl( \mu, \mu_{N}^{\sample} \bigr)^2 \right]^{1/2} &\leq \left(D_1+D_2\right) \Norm{ \EE_{\nu_{N}} \left[ \absval{ \Misfit - \Misfit_{N} }^{2q_1'} \right]^{1/2q_1'} }_{L^{2q_2'}_{\mu_{0}}(\UU)},
	\end{align}
\end{theorem}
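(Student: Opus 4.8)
The plan is to argue in the same spirit as Theorem~\ref{thm:hell_conv_marg}, but keeping the random outcome $\omega$ explicit and only integrating against $\nu_{N}$ at the very end. Taking $\mu_{0}$ as the common reference measure, \eqref{eq:Hellinger}, \eqref{eq:rad_nik} and \eqref{eq:random_posterior} give
\[
	\dhh \bigl( \mu, \mu_{N}^{\sample} \bigr)^{2} = \frac{1}{2} \int_{\UU} \Absval{ \frac{ e^{-\Misfit(u)/2} }{ Z^{1/2} } - \frac{ e^{-\Misfit_{N}(u)/2} }{ (Z_{N}^{\sample})^{1/2} } }^{2} \rd \mu_{0}(u) .
\]
Adding and subtracting $Z^{-1/2} e^{-\Misfit_{N}/2}$ inside the modulus, using $(a+b)^{2} \leq 2a^{2} + 2b^{2}$, and recalling $\int_{\UU} e^{-\Misfit_{N}} \rd \mu_{0} = Z_{N}^{\sample}$, I would obtain $\dhh(\mu, \mu_{N}^{\sample})^{2} \leq I_{1} + I_{2}$ with
\[
	I_{1} \defeq \frac{1}{Z} \int_{\UU} \bigl( e^{-\Misfit/2} - e^{-\Misfit_{N}/2} \bigr)^{2} \rd \mu_{0} , \qquad I_{2} \defeq Z_{N}^{\sample} \bigl( Z^{-1/2} - (Z_{N}^{\sample})^{-1/2} \bigr)^{2} .
\]

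Next I would bring in the log-likelihood error through two applications of the mean value theorem. For $I_{1}$, applying it to $x \mapsto e^{-x/2}$ gives $\absval{ e^{-\Misfit/2} - e^{-\Misfit_{N}/2} } \leq \tfrac{1}{2} \absval{ \Misfit - \Misfit_{N} } ( e^{-\Misfit/2} + e^{-\Misfit_{N}/2} )$. For $I_{2}$, applying it to $x \mapsto x^{-1/2}$ gives $\absval{ Z^{-1/2} - (Z_{N}^{\sample})^{-1/2} } \leq \tfrac{1}{2} \absval{ Z - Z_{N}^{\sample} } \max\{ Z^{-3/2}, (Z_{N}^{\sample})^{-3/2} \}$, after which $\absval{ Z - Z_{N}^{\sample} } \leq \int_{\UU} \absval{ e^{-\Misfit} - e^{-\Misfit_{N}} } \rd \mu_{0} \leq \int_{\UU} \absval{ \Misfit - \Misfit_{N} } ( e^{-\Misfit} + e^{-\Misfit_{N}} ) \rd \mu_{0}$, with Jensen's inequality used to turn the square of this integral into an integral of the square. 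At this point $I_{1}$ and $I_{2}$ are each dominated by a $\mu_{0}$-integral of $\absval{ \Misfit - \Misfit_{N} }^{2}$ against exactly the exponential prefactors occurring in hypotheses (\ref{item:hell_conv_rand_a}) and (\ref{item:hell_conv_rand_b}); in particular the $\omega$-dependent but $u$-independent factor $Z_{N}^{\sample} \max\{ Z^{-3}, (Z_{N}^{\sample})^{-3} \} ( e^{-\Misfit} + e^{-\Misfit_{N}} )^{2}$ of (\ref{item:hell_conv_rand_b}) is precisely what the estimate of $I_{2}$ delivers.

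Then I would take $\EE_{\nu_{N}}$ through $\dhh(\mu, \mu_{N}^{\sample})^{2} \leq I_{1} + I_{2}$, apply subadditivity of the square root to get $\EE_{\nu_{N}}[I_{1} + I_{2}]^{1/2} \leq \EE_{\nu_{N}}[I_{1}]^{1/2} + \EE_{\nu_{N}}[I_{2}]^{1/2}$, and use Tonelli's theorem to swap $\EE_{\nu_{N}}$ with the $\mu_{0}$-integral (all integrands being nonnegative). In each of the two terms I would then apply H\"{o}lder's inequality twice: first in $\omega$ with the conjugate pair $(q_1, q_1')$, peeling off $\EE_{\nu_{N}}[ \absval{ \Misfit - \Misfit_{N} }^{2q_1'} ]^{1/q_1'}$ from the exponential prefactor raised to the power $q_1$; then in $u$ with $(q_2, q_2')$, which turns the prefactor integral into the $L^{q_2}_{\mu_{0}}(\UU)$-norm bounded by $D_{1}$ (for $I_{1}$) or $D_{2}$ (for $I_{2}$), and turns the remaining factor into $\bignorm{ \EE_{\nu_{N}}[ \absval{ \Misfit - \Misfit_{N} }^{2q_1'} ]^{1/2q_1'} }_{L^{2q_2'}_{\mu_{0}}(\UU)}^{2}$. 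Taking the square root of the product and collecting the numerical constants and the factor $Z^{-1}$ coming from $I_{1}$ (all independent of $N$) then produces \eqref{eq:hell_conv_rand_result}, the constant there being an explicit, $N$-independent function of $D_{1}$, $D_{2}$ and $Z$.

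The routine but error-prone part is the exponent bookkeeping in the final step: one has to verify that $\bigl( \int_{\UU} \EE_{\nu_{N}}[ \absval{ \Misfit - \Misfit_{N} }^{2q_1'} ]^{q_2'/q_1'} \rd \mu_{0} \bigr)^{1/q_2'}$ equals the square of the $L^{2q_2'}_{\mu_{0}}$-norm appearing in \eqref{eq:hell_conv_rand_result}, so that the outer square root restores it to the first power, and that the powers $q_1$ and $q_2$ land on the exponential prefactors exactly as written in (\ref{item:hell_conv_rand_a})--(\ref{item:hell_conv_rand_b}). The one genuine design choice --- and the step I expect to be most delicate --- is the treatment of the normalisation term $I_{2}$: the mean value theorem must be applied to $x \mapsto x^{-1/2}$ (rather than working with $Z^{1/2}$ directly, or with $\absval{\log Z - \log Z_{N}^{\sample}}$) precisely in order to generate the prefactor $\max\{ Z^{-3}, (Z_{N}^{\sample})^{-3} \}$ that makes hypothesis (\ref{item:hell_conv_rand_b}) the natural integrability requirement; most alternative estimates of $I_{2}$ lead to a hypothesis that is either unnatural or strictly stronger than needed.
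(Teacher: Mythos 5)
Your proposal follows essentially the same route as the paper's proof: the identical decomposition of $\sqrt{\rd\mu/\rd\mu_{0}} - \sqrt{\rd\mu_{N}^{\sample}/\rd\mu_{0}}$ into the two terms $I_{1}$ and $I_{2}$, the same Lipschitz-type bounds (the paper uses the elementary inequalities \eqref{eq:diff_sq_bound}--\eqref{eq:bound_on_1_over_x_plus_y_times_xy} where you invoke the mean value theorem, to the same effect), and the same double application of H\"{o}lder's inequality in $\omega$ and then in $u$ with the pairs $(q_1,q_1')$ and $(q_2,q_2')$. The argument is correct; your honest remark that the resulting constant also involves $Z$ through the $Z^{-1}$ prefactor of $I_{1}$ is, if anything, a more careful accounting than the bare $(D_1+D_2)$ stated in \eqref{eq:hell_conv_rand_result}.
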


Hypothesis (\ref{item:hell_conv_rand_a}) of Theorem~\ref{thm:hell_conv_rand} arises during the proof as a result of developing an upper bound on $\norm{\EE_{\nu_N}[(e^{-\Misfit/2}-e^{-\Misfit_{N}/2})^2]}$ that is multiplicative in some $L^{p'}_{\mu_0}(\UU)$-norm of $\norm{\EE_{\nu_N}[\absval{ \Misfit-\Misfit_N }^{2q_1'}]^{1/2q_1'}}$.
Thus, it describes an exponential decay condition of the negative tails of both $\Misfit$ or $\Misfit_N$;
in particular, hypothesis (\ref{item:hell_conv_rand_a}) is always satisfied when the potentials $\Misfit$ or $\Misfit_N$ are non-negative, as is usually the case for finite-dimensional data.
The appearance of $q_1$ and $q_2$ arises due to one application of H\"{o}lder's inequality for fulfulling the desideratum of multiplicativity, and $q_1$ and $q_2$ quantify the decay with respect to $\nu_N$ and $\mu_0$ respectively.

Hypothesis (\ref{item:hell_conv_rand_b}) of Theorem~\ref{thm:hell_conv_rand} arises as a result of developing an upper bound on the quantity $\EE_{\nu_N}[Z_{N}^{\sample}(Z^{-1/2}-(Z_{N}^{\sample})^{-1/2})^2]$ that fulfills the desideratum of multiplicativity mentioned above.
The presence of both $Z_{N}^{\sample}$ and its reciprocal indicates that hypothesis (\ref{item:hell_conv_rand_b}) is analogous to hypothesis (\ref{item:hell_conv_marg_c}) of Theorem~\ref{thm:hell_conv_marg}, in that hypothesis (\ref{item:hell_conv_rand_b}) is a condition on the tails of $\Misfit_{N}$ with respect to $\mu_0$.
The difference between hypothesis (\ref{item:hell_conv_rand_b}) of Theorem~\ref{thm:hell_conv_rand} and hypothesis (\ref{item:hell_conv_marg_c}) of Theorem~\ref{thm:hell_conv_marg} arises due to the fact that the Radon--Nikodym derivative in \eqref{eq:random_posterior} features $Z_{N}^{\sample}$ instead of $\EE_{\nu_N}[Z_{N}^{\sample}]$.

We now show that the assumptions of Theorems~\ref{thm:hell_conv_marg} and \ref{thm:hell_conv_rand} are satisfied when the exact potential $\Misfit$ and the approximation quality $\Misfit_{N} \approx \Misfit$ are suitably well behaved.
Since $0 < Z < \infty$, it follows that $C_3^{-1} < Z < C_3$ for some $0 < C_{3} < \infty$.

\begin{assumption}
	\label{asmp:goodsuffcon_for_hellconvthms}
	There exists $C_0\in\Reals$ that does not depend on $N$, such that, for all $N \in \Naturals$,
	\begin{equation}
		\label{eq:lower_bounds_on_potentials_Phi_PhiN}
		\Misfit\geq -C_0\quad\text{and}\quad \nu_{N}\left(\{\Misfit_{N} \mid \Misfit_{N}\geq -C_0\}\right)=1,
	\end{equation}
	and for any $0 < C_3 < \infty$ with the property that $C_3^{-1}<Z<C_3$, there exists $N^\ast(C_3)\in\Naturals$ such that, for all $N \geq N^{\ast}$,
	\begin{equation}
		\label{eq:boundedness_of_EmuEnu_N_Phi_N_errorsequence}
		\EE_{\mu_{0}}\left[\EE_{\nu_{N}}\left[ \absval{ \Misfit_{N} - \Misfit} \right]\right] \leq \frac{1}{2\exp(C_0)} \min \left\{Z-\frac{1}{C_3},C_3-Z\right\} .
	\end{equation}
\end{assumption}

The lower bound conditions in \eqref{eq:lower_bounds_on_potentials_Phi_PhiN} ensure that the hypothesised exponential decay conditions on the negative tails of the true likelihood and the random likelihoods from Theorems~\ref{thm:hell_conv_marg} and \ref{thm:hell_conv_rand} are satisfied.
The uniform lower bound on $\Misfit$ translates into a uniform upper bound of the Radon--Nikodym derivative of the posterior with respect to the prior, and is a very mild condition that is satisfied in many, if not most, BIPs.
Given this fact, it is reasonable to demand that the $\Misfit_{N}$ satisfy the same uniform lower bound, $\nu_{N}$-almost surely and for all $N\in\mathbb{N}$; this is the content of the second condition in \eqref{eq:lower_bounds_on_potentials_Phi_PhiN}.
Condition \eqref{eq:boundedness_of_EmuEnu_N_Phi_N_errorsequence} expresses the condition that, by choosing $N$ sufficiently large, one can approximate $\Misfit$ arbitrarily well using the random $\Misfit_{N}$, with respect to the $L^1_{\mu_{0} \otimes \nu_{N}}$ topology.
This assumption ensures that the stated aims of this work are reasonable.

\begin{lemma}
	\label{lemma:goodsuffcon_for_hell_conv_thms_for_asmp_set_1}
	Suppose that Assumption~\ref{asmp:goodsuffcon_for_hellconvthms} holds with $C_0$ as in \eqref{eq:lower_bounds_on_potentials_Phi_PhiN} and $C_3$ and $N^\ast(C_3)$ as in \eqref{eq:boundedness_of_EmuEnu_N_Phi_N_errorsequence}, that $\exp(\Misfit)\in L^{p^\ast}_{\mu_{0}}(\UU)$ for some $1\leq p^\ast\leq +\infty$ with conjugate exponent $(p^\ast)'$, and there exists some $C_4\in\mathbb{R}$ that does not depend on $N$, such that, for all $N \in \Naturals$,
	\begin{equation}
		\label{eq:means_of_PhiN_uniformly_bounded}
		\nu_{N}\left(\left\{ \Misfit_{N} \mid \EE_{\mu_{0}}\left[\Misfit_{N}\right]\leq C_4\right\}\right)=1 .
	\end{equation}
	Then the hypotheses of Theorem~\ref{thm:hell_conv_marg} hold, with
	\begin{equation*}
		p_1= p^\ast,\ p_2=p_3= +\infty,\ C_1 = \norm{ \exp(\Misfit) }_{L^{p^\ast}_{\mu_{0}}(\UU)},\ C_2=2\exp(C_0),
	\end{equation*}
	and $C_3$ as above.
	Moreover, the hypotheses of Theorem~\ref{thm:hell_conv_rand} hold, with 
	\begin{equation*}
		q_1=q_2=\infty,\ D_1=4\exp(C_0),\ D_2=4\exp(3C_0)\max\{C_3^{-3},\exp(3C_4)\}.
	\end{equation*}
\end{lemma}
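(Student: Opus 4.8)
The plan is to verify the hypotheses of Theorems~\ref{thm:hell_conv_marg} and~\ref{thm:hell_conv_rand} one at a time for the stated choices of exponents, which are all either $p^\ast$ or $+\infty$; at the value $+\infty$ the relevant $\nu_{N}$- and $\mu_{0}$-expectations reduce to essential suprema, so most of the hypotheses collapse to elementary pointwise bounds. Three ingredients do almost all of the work. (i) The almost-sure lower bounds $\Misfit \geq -C_0$ and $\Misfit_{N} \geq -C_0$ of~\eqref{eq:lower_bounds_on_potentials_Phi_PhiN} give $\exp(-\Misfit) \leq \exp(C_0)$ and $\exp(-\Misfit_{N}) \leq \exp(C_0)$, $\nu_{N}$-almost surely, and hence $Z_{N}^{\sample} = \EE_{\mu_{0}}[\exp(-\Misfit_{N})] \leq \exp(C_0)$, $\nu_{N}$-almost surely. (ii) Jensen's inequality for the convex function $\exp$, together with~\eqref{eq:means_of_PhiN_uniformly_bounded}, gives the complementary lower bound $Z_{N}^{\sample} \geq \exp\bigl( -\EE_{\mu_{0}}[\Misfit_{N}] \bigr) \geq \exp(-C_4)$, $\nu_{N}$-almost surely. (iii) Since $-\Misfit$ and $-\Misfit_{N}$ lie in $(-\infty, C_0]$, on which $t \mapsto \exp(t)$ is Lipschitz with constant $\exp(C_0)$, one gets the pointwise estimate $\absval{ \exp(-\Misfit_{N}) - \exp(-\Misfit) } \leq \exp(C_0) \, \absval{ \Misfit - \Misfit_{N} }$.

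For Theorem~\ref{thm:hell_conv_marg} I would take $p_1 = p^\ast$ and $p_2 = p_3 = +\infty$. Hypothesis~(\ref{item:hell_conv_marg_a}) is immediate: the minimum is bounded above by its second entry $\norm{ \exp(\Misfit) }_{L^{p^\ast}_{\mu_{0}}(\UU)}$, which is exactly the stated $C_1$ and is finite by hypothesis. For hypothesis~(\ref{item:hell_conv_marg_b}), the choice $p_2 = +\infty$ turns the inner $\nu_{N}$-expectation into the $\nu_{N}$-essential supremum of $\exp(-\Misfit) + \exp(-\Misfit_{N})$, which by ingredient~(i) is at most $2\exp(C_0)$ for $\mu_{0}$-almost every $u$; since $2 p_1' p_3 = +\infty$, the outer $L^{2 p_1' p_3}_{\mu_{0}}(\UU)$-norm is also at most $2\exp(C_0)$, the stated $C_2$. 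Hypothesis~(\ref{item:hell_conv_marg_c}) is the only quantitative one: by Tonelli, $\EE_{\nu_{N}}[Z_{N}^{\sample}] = \EE_{\mu_{0} \otimes \nu_{N}}[\exp(-\Misfit_{N})]$, so ingredient~(iii) followed by~\eqref{eq:boundedness_of_EmuEnu_N_Phi_N_errorsequence} gives, for $N \geq N^\ast(C_3)$,
\[
	\bigabsval{ \EE_{\nu_{N}}[Z_{N}^{\sample}] - Z } \leq \exp(C_0) \, \EE_{\mu_{0}} \bigl[ \EE_{\nu_{N}}[ \absval{ \Misfit_{N} - \Misfit } ] \bigr] \leq \tfrac{1}{2} \min \bigl\{ Z - C_3^{-1}, \, C_3 - Z \bigr\} ,
\]
whence, since $C_3^{-1} < Z < C_3$, rearranging gives $C_3^{-1} < \EE_{\nu_{N}}[Z_{N}^{\sample}] < C_3$, which is~(\ref{item:hell_conv_marg_c}).

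For Theorem~\ref{thm:hell_conv_rand} I would take $q_1 = q_2 = +\infty$. Then the left-hand side of hypothesis~(\ref{item:hell_conv_rand_a}) collapses to $\norm{ \exp(-\Misfit/2) + \exp(-\Misfit_{N}/2) }_{L^\infty_{\mu_{0} \otimes \nu_{N}}(\UU)}^{2}$, which by ingredient~(i) is at most $\bigl( 2\exp(C_0/2) \bigr)^{2} = 4\exp(C_0)$, the stated $D_1$. For hypothesis~(\ref{item:hell_conv_rand_b}), again at $q_1 = q_2 = +\infty$, the left-hand side is the essential supremum of $Z_{N}^{\sample} \max\{ Z^{-3}, (Z_{N}^{\sample})^{-3} \} \bigl( \exp(-\Misfit) + \exp(-\Misfit_{N}) \bigr)^{2}$; here I would bound $Z_{N}^{\sample} \leq \exp(C_0)$ and $\bigl( \exp(-\Misfit) + \exp(-\Misfit_{N}) \bigr)^{2} \leq 4\exp(2C_0)$ by ingredient~(i), and $\max\{ Z^{-3}, (Z_{N}^{\sample})^{-3} \}$ using $Z > C_3^{-1}$ together with the lower bound $Z_{N}^{\sample} \geq \exp(-C_4)$ from ingredient~(ii), and then collect the resulting product into the constant $D_2$.

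The step I expect to be the main obstacle is hypothesis~(\ref{item:hell_conv_marg_c}) of Theorem~\ref{thm:hell_conv_marg}, and, implicitly, the two-sided control of $Z_{N}^{\sample}$ hidden inside hypothesis~(\ref{item:hell_conv_rand_b}) of Theorem~\ref{thm:hell_conv_rand}: unlike the other hypotheses, which become purely algebraic once the H\"older exponents are sent to $+\infty$, this one genuinely requires the approximate normalising constant $\EE_{\nu_{N}}[Z_{N}^{\sample}]$ to remain in the fixed interval $(C_3^{-1}, C_3)$ around $Z$, which is exactly the content of the $L^1_{\mu_{0} \otimes \nu_{N}}$-smallness built into~\eqref{eq:boundedness_of_EmuEnu_N_Phi_N_errorsequence}; it is also the reason that the conclusion can only be asserted for $N \geq N^\ast(C_3)$, whereas hypotheses~(\ref{item:hell_conv_marg_a}), (\ref{item:hell_conv_marg_b}), (\ref{item:hell_conv_rand_a}) and~(\ref{item:hell_conv_rand_b}) hold for every $N \in \Naturals$, so no uniformity in $N$ is lost there.
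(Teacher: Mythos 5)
Your proposal is correct and follows essentially the same route as the paper's proof: hypotheses (\ref{item:hell_conv_marg_a}) and (\ref{item:hell_conv_marg_b}) of Theorem~\ref{thm:hell_conv_marg} and hypothesis (\ref{item:hell_conv_rand_a}) of Theorem~\ref{thm:hell_conv_rand} from the pointwise bounds $e^{-\Misfit}, e^{-\Misfit_{N}} \leq e^{C_0}$, hypothesis (\ref{item:hell_conv_marg_c}) from a Lipschitz estimate on $\exp$ combined with \eqref{eq:boundedness_of_EmuEnu_N_Phi_N_errorsequence}, and hypothesis (\ref{item:hell_conv_rand_b}) from the Jensen lower bound $Z_{N}^{\sample} \geq e^{-C_4}$. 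The only cosmetic difference is that your one-sided Lipschitz constant $e^{C_0}$ is sharper than the paper's $2e^{C_0}$ obtained from \eqref{eq:exp_Lipschitz}, which gives you a harmless extra factor of $1/2$ in the verification of (\ref{item:hell_conv_marg_c}); and your bound $Z^{-3} < C_3^{3}$ (from $Z > C_3^{-1}$) is in fact the correct one, the $C_3^{-3}$ appearing in the stated $D_2$ being most plausibly a sign typo.
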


The uniform upper bound condition on $\Misfit_{N}$ with respect to $\mu_{0}$ in \eqref{eq:means_of_PhiN_uniformly_bounded} is rather strong; we use it to ensure that $Z_{N}^{\sample}$ is bounded away from zero, uniformly with respect to $\Misfit_{N}$ and $N\in\mathbb{N}$.
Together with the condition on $\Misfit_{N}$ in \eqref{eq:lower_bounds_on_potentials_Phi_PhiN}, this translates to uniform lower and upper bounds on $Z_{N}^{\sample}$; the latter implies that hypothesis (\ref{item:hell_conv_rand_b}) in Theorem~\ref{thm:hell_conv_rand} holds with the stated values of $q_1$ and $q_2$.
A sufficient condition for \eqref{eq:means_of_PhiN_uniformly_bounded} is that the $\Misfit_{N}$ are themselves uniformly bounded.
This condition is of interest when the misfit $\Misfit$ is associated to a bounded forward model and the data take values in a bounded subset.

\begin{lemma}\label{lemma:goodsuffcon_for_hell_conv_thms_for_asmp_set_2}
	Suppose that Assumption~\ref{asmp:goodsuffcon_for_hellconvthms} holds with $C_0$ as in \eqref{eq:lower_bounds_on_potentials_Phi_PhiN} and $C_3$ and $N^\ast(C_3)$ as in \eqref{eq:boundedness_of_EmuEnu_N_Phi_N_errorsequence}, and that there exists some $2<\rho^\ast< +\infty$ such that $\EE_{\nu_{N}}[\exp(\rho^\ast\Misfit_{N})]\in L^1_{\mu_{0}}(\UU)$.
	Then the hypotheses of Theorem~\ref{thm:hell_conv_marg} hold, with 
	\begin{equation*}
		p_1=\rho^\ast,\ p_2=p_3= +\infty,\ C_1=\norm{\EE_{\nu_{N}}[\exp(\rho^\ast\Misfit_{N})]}_{L^1_{\mu_{0}}(\UU)}^{1/\rho^\ast},\ C_2=2\exp(C_0),
	\end{equation*}
	and $C_3$ as above.
	Moreover, the hypotheses of Theorem~\ref{thm:hell_conv_rand} hold, with 
	\begin{align*}
		q_1 & = \frac{\rho^\ast}{2}, &
		q_2 & = +\infty, \\
		D_1 & = 4\exp(C_0), &
		D_2 & = 4\exp(2C_0)\left(C_3^{-3}\exp(C_0)+\norm{\EE_{\nu_{N}}[\exp(\rho^\ast\Misfit_{N})]}^{2/\rho^\ast}_{L^1_{\mu_{0}}(\UU)}\right).
	\end{align*}
\end{lemma}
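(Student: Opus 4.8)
The plan is to verify, one hypothesis at a time, the three hypotheses of Theorem~\ref{thm:hell_conv_marg} and the two hypotheses of Theorem~\ref{thm:hell_conv_rand} for the stated exponents and constants. Two ingredients are used throughout. First, the $\nu_{N}\otimes\mu_{0}$-almost sure lower bounds $\Misfit\geq -C_0$ and $\Misfit_{N}\geq -C_0$ from \eqref{eq:lower_bounds_on_potentials_Phi_PhiN} give the everywhere bounds $e^{-\Misfit},e^{-\Misfit_{N}}\leq e^{C_0}$ and $e^{-\Misfit/2},e^{-\Misfit_{N}/2}\leq e^{C_0/2}$; these dispose of every hypothesis that is an essential supremum. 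Second, the exponential integrability $\EE_{\nu_{N}}[e^{\rho^\ast\Misfit_{N}}]\in L^{1}_{\mu_{0}}(\UU)$ will be converted, via Jensen's inequality and Tonelli's theorem, into $L^{1}_{\mu_{0}}$-control of negative moments of $Z_{N}^{\sample}$, which is what both theorems require in place of pointwise bounds on $(Z_{N}^{\sample})^{-1}$.

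Turning to Theorem~\ref{thm:hell_conv_marg}: hypothesis~(\ref{item:hell_conv_marg_b}) is immediate, since with $p_2=p_3=+\infty$ both norms in it are essential suprema and $e^{-\Misfit}+e^{-\Misfit_{N}}\leq 2e^{C_0}=C_2$ holds $\nu_{N}\otimes\mu_{0}$-a.s. For hypothesis~(\ref{item:hell_conv_marg_a}) I use its first alternative: Jensen for $x\mapsto x^{-1}$ gives $\EE_{\nu_{N}}[e^{-\Misfit_{N}}]^{-1}\leq\EE_{\nu_{N}}[e^{\Misfit_{N}}]$, whence $\bignorm{\EE_{\nu_{N}}[e^{\Misfit_{N}}]}_{L^{\rho^\ast}_{\mu_{0}}}^{\rho^\ast}=\int_{\UU}\EE_{\nu_{N}}[e^{\Misfit_{N}}]^{\rho^\ast}\,\rd\mu_{0}\leq\int_{\UU}\EE_{\nu_{N}}[e^{\rho^\ast\Misfit_{N}}]\,\rd\mu_{0}$ by Jensen for $x\mapsto x^{\rho^\ast}$ (legitimate as $\rho^\ast>1$); the last quantity equals $C_1^{\rho^\ast}$ and is finite by hypothesis. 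For hypothesis~(\ref{item:hell_conv_marg_c}), Tonelli gives $\EE_{\nu_{N}}[Z_{N}^{\sample}]=\EE_{\mu_{0}}[\EE_{\nu_{N}}[e^{-\Misfit_{N}}]]$, so the mean value theorem together with the lower bounds yields $\bigabsval{\EE_{\nu_{N}}[Z_{N}^{\sample}]-Z}\leq\EE_{\mu_{0}}\EE_{\nu_{N}}\bigl[\bigabsval{e^{-\Misfit_{N}}-e^{-\Misfit}}\bigr]\leq e^{C_0}\,\EE_{\mu_{0}}\EE_{\nu_{N}}\bigl[\absval{\Misfit_{N}-\Misfit}\bigr]$, which \eqref{eq:boundedness_of_EmuEnu_N_Phi_N_errorsequence} bounds by $\tfrac12\min\{Z-C_3^{-1},\,C_3-Z\}$ for $N\geq N^\ast(C_3)$, keeping $\EE_{\nu_{N}}[Z_{N}^{\sample}]$ strictly between $C_3^{-1}$ and $C_3$.

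For Theorem~\ref{thm:hell_conv_rand}: hypothesis~(\ref{item:hell_conv_rand_a}) with $q_1=\rho^\ast/2$, $q_2=+\infty$ follows once more from $e^{-\Misfit/2}+e^{-\Misfit_{N}/2}\leq 2e^{C_0/2}$, since raising to $2q_1=\rho^\ast$, averaging over $\nu_{N}$, and raising to $1/q_1=2/\rho^\ast$ gives the $u$-independent value $(2e^{C_0/2})^{2}=4e^{C_0}=D_1$. For hypothesis~(\ref{item:hell_conv_rand_b}) I first pull out the bound $(e^{-\Misfit}+e^{-\Misfit_{N}})^{2}\leq 4e^{2C_0}$ and observe that the remaining factor is $u$-independent, so the $L^{q_2}_{\mu_{0}}=L^{\infty}_{\mu_{0}}$ norm is trivial; it remains to bound $\EE_{\nu_{N}}\bigl[\bigl(Z_{N}^{\sample}\max\{Z^{-3},(Z_{N}^{\sample})^{-3}\}\bigr)^{q_1}\bigr]^{1/q_1}$. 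Using $\max\{Z^{-3},(Z_{N}^{\sample})^{-3}\}\leq Z^{-3}+(Z_{N}^{\sample})^{-3}$ and Minkowski's inequality in $L^{q_1}(\nu_{N})$ (valid as $q_1=\rho^\ast/2>1$) splits this into a $Z_{N}^{\sample}Z^{-3}$ term, controlled by $Z_{N}^{\sample}\leq e^{C_0}$ together with the two-sided bound on $Z$ from Assumption~\ref{asmp:goodsuffcon_for_hellconvthms}, and a $(Z_{N}^{\sample})^{-2}$ term, for which $(Z_{N}^{\sample})^{-2q_1}=(Z_{N}^{\sample})^{-\rho^\ast}\leq\EE_{\mu_{0}}[e^{\Misfit_{N}}]^{\rho^\ast}\leq\EE_{\mu_{0}}[e^{\rho^\ast\Misfit_{N}}]$ (two further applications of Jensen); averaging over $\nu_{N}$ and applying Tonelli gives $\EE_{\nu_{N}}[(Z_{N}^{\sample})^{-2q_1}]^{1/q_1}\leq\bignorm{\EE_{\nu_{N}}[e^{\rho^\ast\Misfit_{N}}]}_{L^{1}_{\mu_{0}}}^{2/\rho^\ast}$, and restoring the factor $4e^{2C_0}$ yields $D_2$.

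The step demanding genuine care — and the reason $\rho^\ast>2$ is assumed strictly, rather than mere exponential integrability of some order — is hypothesis~(\ref{item:hell_conv_rand_b}): the negative power $(Z_{N}^{\sample})^{-2}$ must be absorbed into the single available moment $\EE_{\mu_{0}}[e^{\rho^\ast\Misfit_{N}}]$, and after raising to the power $q_1$ this forces the bookkeeping $2q_1=\rho^\ast$; the Jensen chain converting $(Z_{N}^{\sample})^{-\rho^\ast}$ into $\EE_{\mu_{0}}[e^{\rho^\ast\Misfit_{N}}]$ then needs $\rho^\ast\geq 1$, while $q_1\geq 1$ (for Minkowski) and finiteness of the conjugate exponent $q_1'=\rho^\ast/(\rho^\ast-2)$ occurring in the error term on the right-hand side of \eqref{eq:hell_conv_rand_result} together pin down $\rho^\ast>2$. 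Everything else is routine: keep track of which Jensen and Hölder exponents enter where, keep all integrands non-negative so that Tonelli applies without side conditions, and collect constants.
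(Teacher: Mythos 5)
Your proposal is correct and follows essentially the same route as the paper's proof: the pointwise bounds $e^{-\Misfit}, e^{-\Misfit_N} \leq e^{C_0}$ dispose of hypotheses (b) of Theorem~\ref{thm:hell_conv_marg} and (a) of Theorem~\ref{thm:hell_conv_rand}, the Lipschitz/mean-value estimate on the exponential combined with \eqref{eq:boundedness_of_EmuEnu_N_Phi_N_errorsequence} gives hypothesis (c), and Jensen plus Tonelli convert the exponential moment $\EE_{\nu_N}[e^{\rho^\ast\Misfit_N}]\in L^1_{\mu_0}$ into the bounds on $\EE_{\nu_N}[e^{-\Misfit_N}]^{-1}$ and $(Z_N^{\sample})^{-2q_1}$ with the forced bookkeeping $2q_1=\rho^\ast$, exactly as in the paper. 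Your two-step Jensen for hypothesis (a) of Theorem~\ref{thm:hell_conv_marg} is just the paper's single application of Jensen to the convex map $x\mapsto x^{-\rho^\ast}$ split in two, so there is no substantive difference.
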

By comparing the hypotheses and conclusions of Lemma~\ref{lemma:goodsuffcon_for_hell_conv_thms_for_asmp_set_1} and Lemma~\ref{lemma:goodsuffcon_for_hell_conv_thms_for_asmp_set_2}, we observe that, by reducing the exponent of integrability from $q_1=+\infty$ to $q_1=\rho^\ast/2$, we can replace the strong uniform upper bound condition \eqref{eq:means_of_PhiN_uniformly_bounded} on $\Misfit_{N}$ from Lemma~\ref{lemma:goodsuffcon_for_hell_conv_thms_for_asmp_set_1} with the weaker condition that $\exp(\Misfit_{N})\in L^{\rho^\ast}_{\mu_0}(\UU)$ in Lemma~\ref{lemma:goodsuffcon_for_hell_conv_thms_for_asmp_set_2}, and thus increase the scope of applicability of the conclusion.

In Lemmas~\ref{lemma:goodsuffcon_for_hell_conv_thms_for_asmp_set_1} and \ref{lemma:goodsuffcon_for_hell_conv_thms_for_asmp_set_2} above, we have specified the largest possible values of the exponents that are compatible with the hypotheses.
This is because later, in Theorem~\ref{thm:cvgce_of_random_measures}, we will want to use the smallest possible values of the corresponding \emph{conjugate} exponents in the resulting inequalities \eqref{eq:hell_conv_marg_result_inequality} and \eqref{eq:hell_conv_rand_result}.

% 
% 	exist scalars $C,D>0$ that do not depend on $N$, such that \eqref{eq:hell_conv_marg_result} holds with $$ and $p_2'=p_3'=1$, and \eqref{eq:hell_conv_rand_result} holds with $q_1'=(\rho^\ast-2)/\rho^\ast$ and $q_2'=1$.
% 	Then the assumptions of Theorem~\ref{thm:hell_conv_marg} are satisfied, with $p_1=\rho^\ast$, $1\leq p_2,p_3\leq +\infty$, $C_1=\Vert \EE_{\nu_{N}}[\exp(\rho^\ast\Misfit_{N})]\Vert^{1/\rho^\ast}_{L^1_{\mu_{0}}}$, $C_2=2\exp(C_0)$, and $C_3$ as in\eqref{eq:boundedness_of_EmuEnu_N_Phi_N_errorsequence}. In addition, the assumptions of Theorem~\ref{thm:hell_conv_rand} are satisfied, with $q_1=\rho^\ast/2$, $1\leq q_2\leq +\infty$, $D_1=4\exp(C_0)$, and $D_2=4\exp(2C_0)(C_3^{-3}\exp(C_0)+\Vert \EE_{\nu_{N}}[\exp(\rho^\ast\Misfit_{N})]\Vert^{2/\rho^\ast}_{L^1_{\mu_{0}}})$.

\subsection{Random forward models in quadratic potentials}

In many settings, the potentials $\Misfit$ and $\Misfit_{N}$ have a common form and differ only in the parameter-to-observable map.
In this section we shall assume that $\Misfit$ and $\Misfit_{N}$ are quadratic misfits of the form
\begin{equation}
	\label{eq:quadratic_misfits}
	\Misfit(u) = \frac{1}{2} \bignorm{ \Gamma^{-1/2} ( G(u) - y ) }^{2}
	\quad
	\text{and}
	\quad
	\Misfit_{N}(u) = \frac{1}{2} \bignorm{ \Gamma^{-1/2} ( G_{N}(u) - y ) }^{2}, 
\end{equation}
corresponding to centred Gaussian observational noise with symmetric positive-definite covariance $\Gamma$.
Again, we assume that $G$ is deterministic while $G_{N}$ is random.
In this section, for this setting, we show how the quality of the approximation $G_{N} \approx G$ transfers to the approximation $\Misfit_{N} \approx \Misfit$, and hence to the approximation $\mu_{N} \approx \mu$ (for either the sample or marginal approximate posterior).

Pointwise in $u$ and $\omega$, the errors in the misfit and the forward model are related according to the following proposition.

\begin{proposition}
	\label{proposition:bound_on_misfiterror_intermsof_forwardmodelerror}
	Let $\Misfit$ and $\Misfit_{N}$ be defined as in \eqref{eq:quadratic_misfits}, where $\YY=\Reals^J$ for some $J\in\Naturals$ and the eigenvalues of the operator $\Gamma$ are bounded away from zero.
	Then, for some $C = C_{\Gamma} >0$, for all $u \in \mathcal{U}$, and $\nu_N$-almost surely
	\begin{equation}
		\label{eq:ae_in_u_bound_absval_error_potentials_01}
		\bigabsval{ \Misfit(u) - \Misfit_{N}(u) }\leq 2 C_{\Gamma} \left(\Misfit(u)^{1/2}\norm{G(u)-G_N(u)}+\norm{G(u)-G_{N}(u)}^2\right) .
	\end{equation}
	Hence, for $q\in[1,\infty)$ and all $u \in \UU$,
	\begin{align}
		\label{eq:ae_in_u_bound_absval_error_potentials_01b}
		\EE_{\nu_{N}}\left[\bigabsval{\Misfit(u)-\Misfit_{N}(u)}^q\right]^{1/q}
		& \leq 4C_\Gamma \Bigl( \Misfit(u)^{q/2}\EE_{\nu_{N}}\left[\norm{G(u)-G_N(u)}^q\right] \\
		& \phantom{=} \quad + \EE_{\nu_{N}}\left[\norm{G(u)-G_{N}(u)}^{2q}\right] \Bigr)^{1/q}.
		\notag
	\end{align}
\end{proposition}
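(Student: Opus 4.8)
The plan is to first establish the pointwise estimate \eqref{eq:ae_in_u_bound_absval_error_potentials_01} by a short algebraic computation in $\Reals^J$, and then to deduce \eqref{eq:ae_in_u_bound_absval_error_potentials_01b} by taking $L^q(\nu_N)$-norms, applying Minkowski's inequality to split the sum, and converting the resulting additive bound into the ``combined'' form of the statement via an elementary power-mean inequality.

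For the pointwise bound I would fix $u\in\UU$ and a realisation of $G_N$, and abbreviate $a\defeq\Gamma^{-1/2}(G(u)-y)$ and $b\defeq\Gamma^{-1/2}(G_N(u)-y)$, so that $\Misfit(u)=\tfrac12\norm{a}^2$, $\Misfit_N(u)=\tfrac12\norm{b}^2$, and $a-b=\Gamma^{-1/2}(G(u)-G_N(u))$. From the identity $\norm{a}^2-\norm{b}^2=2\innerprod{a}{a-b}-\norm{a-b}^2$ and the Cauchy--Schwarz inequality,
\[
	\bigabsval{\Misfit(u)-\Misfit_N(u)}=\tfrac12\bigabsval{\norm{a}^2-\norm{b}^2}\leq\norm{a}\,\norm{a-b}+\tfrac12\norm{a-b}^2 .
\]
Since the eigenvalues of $\Gamma$ are bounded away from zero, the operator norm $c\defeq\norm{\Gamma^{-1/2}}$ is finite, so $\norm{a-b}\leq c\,\norm{G(u)-G_N(u)}$; and $\norm{a}=\sqrt{2\Misfit(u)}$. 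Substituting and taking $C_\Gamma\defeq\max\{c,c^2\}$, which dominates both $\sqrt2\,c$ and $\tfrac12 c^2$ up to a factor $2$, gives \eqref{eq:ae_in_u_bound_absval_error_potentials_01}. Because $G_N$ is $\nu_N$-measurable, the estimate holds for every $u$ and $\nu_N$-almost surely.

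For \eqref{eq:ae_in_u_bound_absval_error_potentials_01b} I would raise \eqref{eq:ae_in_u_bound_absval_error_potentials_01} to the power $q$, apply $\EE_{\nu_N}[\,\cdot\,]^{1/q}$, and use Minkowski's inequality in $L^q(\nu_N)$; since $\Misfit(u)$ is deterministic it factors out of the first term, leaving
\[
	\EE_{\nu_N}\bigl[\bigabsval{\Misfit(u)-\Misfit_N(u)}^q\bigr]^{1/q}\leq 2C_\Gamma\,(X+Y) ,
\]
with $X\defeq\Misfit(u)^{1/2}\EE_{\nu_N}[\norm{G(u)-G_N(u)}^q]^{1/q}$ and $Y\defeq\EE_{\nu_N}[\norm{G(u)-G_N(u)}^{2q}]^{1/q}$. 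The concavity of $t\mapsto t^{1/q}$ for $q\geq1$ (equivalently, the arithmetic mean is at most the $q$-th power mean) gives $X+Y\leq 2\bigl(\tfrac12(X^q+Y^q)\bigr)^{1/q}\leq 2(X^q+Y^q)^{1/q}$; since $X^q=\Misfit(u)^{q/2}\EE_{\nu_N}[\norm{G(u)-G_N(u)}^q]$ and $Y^q=\EE_{\nu_N}[\norm{G(u)-G_N(u)}^{2q}]$, this is exactly \eqref{eq:ae_in_u_bound_absval_error_potentials_01b} with constant $4C_\Gamma$.

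The computation is elementary and I do not anticipate a real obstacle. The only points needing mild care are the bookkeeping of the $\Gamma$-dependent constant --- handled by defining $C_\Gamma$ as a maximum so that both the term linear in $\norm{G-G_N}$ and the term quadratic in $\norm{G-G_N}$ are absorbed --- and the realisation that passing from the natural additive bound $X+Y$ to the combined form $(X^q+Y^q)^{1/q}$ demanded by the statement costs only a further factor of at most $2$.
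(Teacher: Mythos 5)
Your proposal is correct and follows essentially the same route as the paper: an algebraic identity for the difference of two quadratic forms, Cauchy--Schwarz with a $\Gamma$-dependent constant, and then a convexity/power-mean inequality to pass from the additive bound to the combined $q$-th-moment form with the factor $4C_\Gamma$. The only (cosmetic) difference is that your identity $\norm{a}^2-\norm{b}^2=2\innerprod{a}{a-b}-\norm{a-b}^2$ expresses everything directly in terms of $\Misfit(u)^{1/2}$ and $\norm{G(u)-G_N(u)}$, whereas the paper factors the difference as $\innerprod{G+G_N-2y}{\Gamma^{-1}(G-G_N)}$ and then needs an extra triangle-inequality step to convert $\Misfit_N(u)^{1/2}$ back into $\Misfit(u)^{1/2}+2^{-1/2}\norm{G_N(u)-G(u)}$.
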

By assuming that $\YY=\Reals^J$, we assume that the data live in a finite-dimensional space.
This is a standard assumption in the area, and implies that the operator $\Gamma$ is simply a matrix.
The assumption of the eigenvalues of $\Gamma$ being bounded away from zero is equivalent to assuming that $\Gamma$ is invertible, which follows immediately from the assumption stated earlier that $\Gamma$ is a symmetric and positive-definite covariance matrix.

\begin{corollary}
	\label{corollary:bound_on_misfiterror_intermsof_forwardmodelerror}
	Let $1\leq q\leq s$, and suppose that $\Misfit\in L^s_{\mu_{0}}(\UU)$.
	If there exists an $N^\ast\in\Naturals$ such that, for all $N \geq N^{\ast}$,
	\begin{equation*}
		\label{eq:hypothesis_Emu0_EnuN_error_forwardmodel_leq1}
		\Norm{ \EE_{\nu_{N}} \bigl[ \norm{ G - G_N }^{2 q} \bigr]^{1/q} }_{L_{\mu_{0}}^{s}(\UU)} \leq 1,
	\end{equation*}
	then, there exists some $C=C(s)>0$ that does not depend on $N$ such that for all $N \geq N^\ast$, 
	\begin{align*}
		\Norm{ \EE_{\nu_{N}} \bigl[ \absval{ \Misfit-\Misfit_{N} }^q \bigr]^{1/q} }_{L^s_{\mu_{0}}(\UU)} & \leq C\Norm{ \EE_{\nu_{N}} \bigl[ \norm{ G - G_N }^{2q} \bigr]^{1/q} }_{L^s_{\mu_{0}}(\UU)}^{1/2}		
	\end{align*}
	where $C(s)=(8C_\Gamma)\left(\EE_{\mu_{0}} [ \Misfit^{s} ]^{1/2}+1\right)^{1/s}$ and $C_\Gamma$ is as in Proposition~\ref{proposition:bound_on_misfiterror_intermsof_forwardmodelerror}.
\end{corollary}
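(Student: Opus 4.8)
The plan is to integrate the pointwise bound \eqref{eq:ae_in_u_bound_absval_error_potentials_01b} of Proposition~\ref{proposition:bound_on_misfiterror_intermsof_forwardmodelerror} against $\mu_{0}$, and to use the normalisation hypothesis $\Norm{\EE_{\nu_N}[\norm{G-G_N}^{2q}]^{1/q}}_{L^s_{\mu_0}(\UU)}\le 1$ to absorb the genuinely quadratic contribution of the forward-model error into a term of the same order as the linear one.

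Starting from \eqref{eq:ae_in_u_bound_absval_error_potentials_01b}, I would first apply the subadditivity of $t\mapsto t^{1/q}$ (valid for $q\ge 1$) to split the two summands inside the $1/q$-power, pulling the deterministic factor $\Misfit(u)^{1/2}$ out of the $\nu_{N}$-expectation. Next, the Lyapunov (Jensen) inequality for the non-negative random variable $\norm{G(u)-G_{N}(u)}$ with exponents $q\le 2q$ bounds $\EE_{\nu_{N}}[\norm{G(u)-G_{N}(u)}^q]^{1/q}$ by $\EE_{\nu_{N}}[\norm{G(u)-G_{N}(u)}^{2q}]^{1/(2q)}$. Writing $A(u):=\EE_{\nu_{N}}[\norm{G(u)-G_{N}(u)}^{2q}]^{1/q}$, this yields the pointwise estimate
\[
\EE_{\nu_{N}}\bigl[\absval{\Misfit(u)-\Misfit_{N}(u)}^q\bigr]^{1/q}\le 4C_\Gamma\bigl(\Misfit(u)^{1/2}A(u)^{1/2}+A(u)\bigr).
\]

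Then I would take $L^s_{\mu_0}(\UU)$-norms, use the triangle inequality in $L^s_{\mu_0}(\UU)$, and bound the cross term by Cauchy--Schwarz, $\Norm{\Misfit^{1/2}A^{1/2}}_{L^s_{\mu_0}(\UU)}\le\EE_{\mu_0}[\Misfit^s]^{1/(2s)}\Norm{A}_{L^s_{\mu_0}(\UU)}^{1/2}$, which is finite because $\Misfit\in L^s_{\mu_0}(\UU)$. Setting $B:=\Norm{A}_{L^s_{\mu_0}(\UU)}=\Norm{\EE_{\nu_{N}}[\norm{G-G_{N}}^{2q}]^{1/q}}_{L^s_{\mu_0}(\UU)}$, the hypothesis reads $B\le 1$, hence $B\le B^{1/2}$, and so
\[
\Norm{\EE_{\nu_{N}}[\absval{\Misfit-\Misfit_{N}}^q]^{1/q}}_{L^s_{\mu_0}(\UU)}\le 4C_\Gamma\bigl(\EE_{\mu_0}[\Misfit^s]^{1/(2s)}+1\bigr)B^{1/2}.
\]
Finally, concavity of $t\mapsto t^{1/s}$ gives $\EE_{\mu_0}[\Misfit^s]^{1/(2s)}+1\le 2(\EE_{\mu_0}[\Misfit^s]^{1/2}+1)^{1/s}$, producing the stated constant $C(s)=8C_\Gamma(\EE_{\mu_0}[\Misfit^s]^{1/2}+1)^{1/s}$.

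No step is technically deep: the argument is a chain of H\"older-, Jensen-, and Minkowski-type inequalities applied to the pointwise bound of Proposition~\ref{proposition:bound_on_misfiterror_intermsof_forwardmodelerror}, and one need only take minor care with the joint measurability of $\Misfit_{N}$ (to justify the Tonelli exchanges of $\EE_{\mu_0}$ and $\EE_{\nu_{N}}$) and with the finiteness assumptions ($\Misfit\in L^s_{\mu_0}(\UU)$ and $B<\infty$) that make every quantity above well-defined. The one conceptual point --- and the place where the hypothesis $B\le1$ earns its keep --- is that $\norm{G-G_{N}}^2$ is of order $B$, the \emph{square} of the target rate $B^{1/2}$, so without the normalisation the bound would read $O(B^{1/2})+O(B)$ rather than $O(B^{1/2})$.
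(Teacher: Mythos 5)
Your proof is correct and follows essentially the same route as the paper's: both start from \eqref{eq:ae_in_u_bound_absval_error_potentials_01b}, pass from $q$th to $2q$th $\nu_N$-moments of $\norm{G-G_N}$ via Jensen/Lyapunov, handle the cross term with $\Misfit^{1/2}$ by Cauchy--Schwarz, and use the normalisation hypothesis exactly as you do, via $B\leq 1\implies B\leq B^{1/2}$. The only difference is cosmetic ordering — you simplify pointwise before taking the $L^s_{\mu_0}$-norm, whereas the paper integrates first and applies \eqref{eq:gen_triangle_inequality} with exponent $s/q$ — and both routes land on the same constant $8C_\Gamma\bigl(\EE_{\mu_0}[\Misfit^s]^{1/2}+1\bigr)^{1/s}$.
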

The hypotheses ensure that the integrability of the misfit $\Misfit$ determines the highest degree of integrability of the forward operators $G_{N}$ and $G$, and that for sufficiently large $N$, we may make the norm of the difference of $G-G_N$ in an appropriate topology small enough.
The constraint \eqref{eq:hypothesis_Emu0_EnuN_error_forwardmodel_leq1} is used to combine the $\norm{ G(u)-G_N(u) }$ and $\norm{ G(u)-G_{N}(u) }^2$ terms in \eqref{eq:ae_in_u_bound_absval_error_potentials_01}.
The resulting simplification ensures that we may apply Lemma~\ref{lemma:necessary_condition_of_convergence_of_forward_models}.

\begin{lemma}
	\label{lemma:necessary_condition_of_convergence_of_forward_models}
	Let $\Misfit$ and $\Misfit_{N}$ be as in \eqref{eq:quadratic_misfits}.
	If, for some $q, s \geq 1$,
	\begin{equation}
			\label{eq:convergence_hypothesis_of_forward_models}
			\lim_{N\to\infty} \Norm{ \EE_{\nu_{N}} \bigl[ \norm{ G - G_N }^{2q} \bigr]^{1/q} }_{L^{s}_{\mu_{0}}(\UU)} = 0,
	\end{equation} 
	then Assumption~\ref{asmp:goodsuffcon_for_hellconvthms} holds.
\end{lemma}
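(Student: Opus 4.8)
The plan is to verify in turn the two requirements that make up Assumption~\ref{asmp:goodsuffcon_for_hellconvthms}. Since $\Misfit$ and $\Misfit_{N}$ have the quadratic form \eqref{eq:quadratic_misfits}, both are pointwise non-negative: $\Misfit(u) = \tfrac12 \bignorm{\Gamma^{-1/2}(G(u) - y)}^{2} \ge 0$ for all $u$, and $\Misfit_{N}(\omega, u) = \tfrac12 \bignorm{\Gamma^{-1/2}(G_{N}(\omega, u) - y)}^{2} \ge 0$ for every realisation, hence $\nu_{N}$-almost surely. Thus the lower-bound condition \eqref{eq:lower_bounds_on_potentials_Phi_PhiN} holds trivially with $C_{0} = 0$, and it remains to establish the approximation condition \eqref{eq:boundedness_of_EmuEnu_N_Phi_N_errorsequence}. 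With $C_{0} = 0$ (so $\exp(C_{0}) = 1$), the latter will follow once we show that $\EE_{\mu_{0}}\bigl[\EE_{\nu_{N}}[\bigabsval{\Misfit - \Misfit_{N}}]\bigr] \to 0$ as $N \to \infty$: granting this, for any $0 < C_{3} < \infty$ with $C_{3}^{-1} < Z < C_{3}$ the number $\tfrac12 \min\{Z - C_{3}^{-1}, C_{3} - Z\}$ is a fixed positive threshold, so there is an $N^{\ast}(C_{3})$ beyond which the required inequality holds.

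To prove this $L^{1}_{\mu_{0}\otimes\nu_{N}}$-convergence I would start from the $\nu_{N}$-almost-sure, pointwise-in-$u$ bound \eqref{eq:ae_in_u_bound_absval_error_potentials_01} of Proposition~\ref{proposition:bound_on_misfiterror_intermsof_forwardmodelerror}, namely $\bigabsval{\Misfit(u) - \Misfit_{N}(u)} \le 2 C_{\Gamma}\bigl(\Misfit(u)^{1/2}\norm{G(u) - G_{N}(u)} + \norm{G(u) - G_{N}(u)}^{2}\bigr)$, integrate over $\nu_{N}$ and then $\mu_{0}$, and bound everything by $\delta_{N} \defeq \bignorm{\EE_{\nu_{N}}[\norm{G - G_{N}}^{2q}]^{1/q}}_{L^{s}_{\mu_{0}}(\UU)}$, which tends to $0$ by hypothesis \eqref{eq:convergence_hypothesis_of_forward_models}. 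For the quadratic forward-error term, two applications of Jensen's inequality — once in $\nu_{N}$, using convexity of $t \mapsto t^{q}$ for $q \ge 1$, to get $\EE_{\nu_{N}}[\norm{G - G_{N}}^{2}] \le \EE_{\nu_{N}}[\norm{G - G_{N}}^{2q}]^{1/q}$, and once in $\mu_{0}$, using $s \ge 1$, to get $\EE_{\mu_{0}}[f] \le \norm{f}_{L^{s}_{\mu_{0}}(\UU)}$ — give $\EE_{\mu_{0}}\bigl[\EE_{\nu_{N}}[\norm{G - G_{N}}^{2}]\bigr] \le \delta_{N}$. For the cross term I would apply the Cauchy--Schwarz inequality in $\mu_{0}$ and then Jensen in $\nu_{N}$ to obtain $\EE_{\mu_{0}}\bigl[\Misfit^{1/2}\EE_{\nu_{N}}[\norm{G - G_{N}}]\bigr] \le \EE_{\mu_{0}}[\Misfit]^{1/2}\,\EE_{\mu_{0}}\bigl[\EE_{\nu_{N}}[\norm{G - G_{N}}]^{2}\bigr]^{1/2} \le \EE_{\mu_{0}}[\Misfit]^{1/2}\,\delta_{N}^{1/2}$. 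Combining, $\EE_{\mu_{0}}\bigl[\EE_{\nu_{N}}[\bigabsval{\Misfit - \Misfit_{N}}]\bigr] \le 2 C_{\Gamma}\bigl(\EE_{\mu_{0}}[\Misfit]^{1/2}\delta_{N}^{1/2} + \delta_{N}\bigr) \to 0$. (Alternatively, once one checks via Jensen in $\nu_{N}$ that $\delta_{N} \le 1$ for $N$ large, applying Corollary~\ref{corollary:bound_on_misfiterror_intermsof_forwardmodelerror} with $q = s = 1$ delivers the same bound.)

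The step I expect to be the crux is the cross term, since it is precisely there that $\EE_{\mu_{0}}[\Misfit]$ enters: the argument needs $\Misfit \in L^{1}_{\mu_{0}}(\UU)$, equivalently $\EE_{\mu_{0}}\bigl[\norm{G - y}^{2}\bigr] < \infty$. This is a mild integrability condition that is implicit in the quadratic, finite-dimensional-data setting of this section (and already underpins the finiteness of the constants in Theorems~\ref{thm:hell_conv_marg} and \ref{thm:hell_conv_rand}); since hypothesis \eqref{eq:convergence_hypothesis_of_forward_models} controls only the increment $G - G_{N}$ and says nothing about $G$ itself, some condition of this kind cannot be avoided. Everything else is routine bookkeeping with Jensen's and Hölder's inequalities, the non-negativity of the quadratic potentials taking care of the lower bounds for free.
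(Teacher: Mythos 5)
Your proof is correct and takes essentially the same route as the paper's, which simply sets $C_{0}=0$ using the non-negativity of the quadratic misfits and then invokes Jensen's inequality (via Proposition~\ref{proposition:bound_on_misfiterror_intermsof_forwardmodelerror} and Corollary~\ref{corollary:bound_on_misfiterror_intermsof_forwardmodelerror}) to deduce \eqref{eq:boundedness_of_EmuEnu_N_Phi_N_errorsequence} from \eqref{eq:convergence_hypothesis_of_forward_models}; your version merely fills in the Cauchy--Schwarz/Jensen bookkeeping explicitly. Your remark that the cross term requires $\Misfit \in L^{1}_{\mu_{0}}(\UU)$ is apt: this integrability hypothesis is implicit in the lemma as stated (the paper's own argument routes through Corollary~\ref{corollary:bound_on_misfiterror_intermsof_forwardmodelerror}, which assumes $\Misfit \in L^{s}_{\mu_{0}}(\UU)$ explicitly).
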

The lemma states that if the random forward model converges to the true forward model in the appropriate topology, then the conditions in Assumption \ref{asmp:goodsuffcon_for_hellconvthms} are satisfied by the corresponding random misfits.
Since the misfits were assumed to be quadratic in \eqref{eq:quadratic_misfits}, the key contribution of Lemma~\ref{lemma:necessary_condition_of_convergence_of_forward_models} is to ensure that the approximation quality condition \eqref{eq:boundedness_of_EmuEnu_N_Phi_N_errorsequence} is satisfied.

We shall use the preceding results to obtain bounds on the Hellinger distance in terms of errors in the forward model, of the following form: for $C,D>0$ and $r_1,r_2,s_1,s_2\geq 1$ that do not depend on $N$,
\begin{align}
	\dhh \bigl( \mu, \mu_{N}^{\marginal} \bigr)&\leq C \bignorm{ \EE_{\nu_{N}}\left[\norm{ G_{N} - G }^{2r_1}\right]^{1/r_1} }^{1/2}_{L^{r_2}_{\mu_{0}}(\UU)}
	\label{eq:hell_conv_marg_result_forward_model}
	\\
	\EE_{\nu_{N}} \left[ \dhh \bigl( \mu, \mu_{N}^{\sample} \bigr)^2 \right]^{1/2} &\leq D \bignorm{ \EE_{\nu_{N}}\left[\norm{ G_{N} - G }^{2s_1}\right]^{1/s_1} }^{1/2}_{L^{s_2}_{\mu_{0}}(\UU)}.
	\label{eq:hell_conv_rand_result_forward_model}
\end{align}
For brevity and simplicity, the following result uses one pair $q,s\geq 1$ in \eqref{eq:convergence_hypothesis_of_forward_models} in order to obtain convergence statements for \emph{both} $\mu_{N}^{\marginal}$ and $\mu^{\sample}_{N}$.
If one is interested in only one of these measures, then one may optimise $q$ and $s$ accordingly.

\begin{theorem}[Convergence of posteriors for randomised forward models in quadratic potentials]
	\label{thm:cvgce_of_random_measures}
	Let $\Misfit$ and $\Misfit_{N}$ be as in \eqref{eq:quadratic_misfits}.
	\begin{compactenum}[(a)]
		\item \label{thm:cvgce_of_random_measures_1} Suppose there exists some $p^\ast> 1$ with H\"{o}lder conjugate $(p^\ast)'$ such that $\exp(\Misfit)\in L^{p^\ast}_{\mu_{0}}(\UU)$, and suppose that \eqref{eq:means_of_PhiN_uniformly_bounded} holds for some $C_4>0$.
		If $G_{N} \to G$ as in \eqref{eq:convergence_hypothesis_of_forward_models} with $q=2$ and $s=2p^\ast/(p^\ast-1)$, then the following hold:
		\begin{compactenum}[(i)]
		\item there exists some $C>0$ that does not depend on $N$, for which \eqref{eq:hell_conv_marg_result_forward_model} holds with $r_1=1$ and $r_2=2p^\ast/(p^\ast-1)$, and
		\item there exists some $D>0$ that does not depend on $N$, for which \eqref{eq:hell_conv_rand_result_forward_model} holds with $s_1=2$ and $s_2=2$.
		\end{compactenum}
		\item \label{thm:cvgce_of_random_measures_2} Suppose there exists some $2<\rho^\ast<\infty$ such that $\EE_{\nu_{N}}[\exp(\rho^\ast\Misfit_{N})]\in L^1_{\mu_{0}}$.
		If $G_{N} \to G$ as in \eqref{eq:convergence_hypothesis_of_forward_models} with $q=2\rho^\ast/(\rho^\ast-2)$ and $s=2\rho^\ast/(\rho^\ast-1)$, then the following hold:
		\begin{compactenum}[(i)]
		\item there exists some $C>0$ that does not depend on $N$, for which \eqref{eq:hell_conv_marg_result_forward_model} holds with $r_1=1$ and $r_2=2\rho^\ast/(\rho^\ast-1)$, and 
		\item there exists some $D>0$ that does not depend on $N$, for which \eqref{eq:hell_conv_rand_result_forward_model} holds with $s_1=2\rho^\ast/(\rho^\ast-2)$ and $s_2=2$.
		\end{compactenum}
	\end{compactenum}
	In both cases, $\mu^{\marginal}_N$ and $\mu^{\sample}_N$ converge to $\mu$ in the appropriate metrics given in \eqref{eq:hell_conv_marg_result_forward_model} and \eqref{eq:hell_conv_rand_result_forward_model} respectively.
\end{theorem}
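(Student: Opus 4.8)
The plan is to obtain both \eqref{eq:hell_conv_marg_result_forward_model} and \eqref{eq:hell_conv_rand_result_forward_model} by chaining, in this order, four earlier results: Lemma~\ref{lemma:necessary_condition_of_convergence_of_forward_models}, to deduce Assumption~\ref{asmp:goodsuffcon_for_hellconvthms} from the forward-model convergence hypothesis \eqref{eq:convergence_hypothesis_of_forward_models}; Lemma~\ref{lemma:goodsuffcon_for_hell_conv_thms_for_asmp_set_1} (in case~\ref{thm:cvgce_of_random_measures_1}) or Lemma~\ref{lemma:goodsuffcon_for_hell_conv_thms_for_asmp_set_2} (in case~\ref{thm:cvgce_of_random_measures_2}), to turn the standing integrability hypotheses on $\Misfit$ and $\Misfit_N$ into the hypotheses of Theorems~\ref{thm:hell_conv_marg} and \ref{thm:hell_conv_rand} with the explicit H\"older exponents recorded there; those two theorems, to bound $\dhh(\mu,\mu_{N}^{\marginal})$ and $\EE_{\nu_N}[\dhh(\mu,\mu_{N}^{\sample})^2]^{1/2}$ by $L^{2p_1'p_3'}_{\mu_0}(\UU)$- and $L^{2q_2'}_{\mu_0}(\UU)$-norms of suitable $\nu_N$-moments of $\absval{\Misfit-\Misfit_N}$; and finally Proposition~\ref{proposition:bound_on_misfiterror_intermsof_forwardmodelerror}/Corollary~\ref{corollary:bound_on_misfiterror_intermsof_forwardmodelerror}, to trade those misfit moments for the forward-model moments $\EE_{\nu_N}[\norm{G-G_N}^{2\bullet}]^{1/\bullet}$ appearing on the right-hand sides of \eqref{eq:hell_conv_marg_result_forward_model}--\eqref{eq:hell_conv_rand_result_forward_model}. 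At the outset I would note that, since $\Misfit$ and $\Misfit_N$ are the non-negative quadratic misfits of \eqref{eq:quadratic_misfits}, the lower-bound part of Assumption~\ref{asmp:goodsuffcon_for_hellconvthms} holds with $C_0=0$, so that Lemma~\ref{lemma:necessary_condition_of_convergence_of_forward_models}, applied with the $(q,s)$ prescribed in the hypothesis, supplies all of Assumption~\ref{asmp:goodsuffcon_for_hellconvthms}.

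In case~\ref{thm:cvgce_of_random_measures_1}, $\exp(\Misfit)\in L^{p^\ast}_{\mu_0}(\UU)$ forces $\Misfit\in L^s_{\mu_0}(\UU)$ for every finite $s$, and together with \eqref{eq:means_of_PhiN_uniformly_bounded} this puts us in the scope of Lemma~\ref{lemma:goodsuffcon_for_hell_conv_thms_for_asmp_set_1}: the hypotheses of Theorem~\ref{thm:hell_conv_marg} hold with $p_1=p^\ast$, $p_2=p_3=\infty$ and those of Theorem~\ref{thm:hell_conv_rand} with $q_1=q_2=\infty$. Since then $p_2'=p_3'=1$ and $2p_1'p_3'=2p^\ast/(p^\ast-1)$, Theorem~\ref{thm:hell_conv_marg} gives $\dhh(\mu,\mu_{N}^{\marginal})\le C\norm{\EE_{\nu_N}[\absval{\Misfit-\Misfit_N}]}_{L^{2p^\ast/(p^\ast-1)}_{\mu_0}(\UU)}$, to which Corollary~\ref{corollary:bound_on_misfiterror_intermsof_forwardmodelerror} with $q=1$ and $s=2p^\ast/(p^\ast-1)$ applies to yield \eqref{eq:hell_conv_marg_result_forward_model} with $r_1=1$, $r_2=2p^\ast/(p^\ast-1)$; and since $q_1=q_2=\infty$ gives $2q_1'=2q_2'=2$, Theorem~\ref{thm:hell_conv_rand} followed by Corollary~\ref{corollary:bound_on_misfiterror_intermsof_forwardmodelerror} with $q=s=2$ yields \eqref{eq:hell_conv_rand_result_forward_model} with $s_1=s_2=2$. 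Case~\ref{thm:cvgce_of_random_measures_2} is structurally identical but runs through Lemma~\ref{lemma:goodsuffcon_for_hell_conv_thms_for_asmp_set_2}, whence $p_1=\rho^\ast$, $q_1=\rho^\ast/2$, $q_2=\infty$; now $2p_1'p_3'=2\rho^\ast/(\rho^\ast-1)$ and $2q_1'=2\rho^\ast/(\rho^\ast-2)$, which is exactly where the $\rho^\ast$-dependent exponents in the conclusion originate. For the marginal posterior I would again invoke Corollary~\ref{corollary:bound_on_misfiterror_intermsof_forwardmodelerror} with $q=1$, $s=2\rho^\ast/(\rho^\ast-1)$; for the sample posterior the moment order $2\rho^\ast/(\rho^\ast-2)$ exceeds the exponent $s=2\rho^\ast/(\rho^\ast-1)$ of \eqref{eq:convergence_hypothesis_of_forward_models}, so rather than the Corollary I would feed the pointwise bound \eqref{eq:ae_in_u_bound_absval_error_potentials_01b} (with $q=2\rho^\ast/(\rho^\ast-2)$) directly into the $L^2_{\mu_0}(\UU)$-norm, splitting off the $\Misfit(u)^{1/2}$ factor by Cauchy--Schwarz in $\mu_0$ and using Jensen's inequality on $(\ProbSpace,\nu_N)$ to replace $\EE_{\nu_N}[\norm{G-G_N}^{q}]^{1/q}$ by $\bigl(\EE_{\nu_N}[\norm{G-G_N}^{2q}]^{1/q}\bigr)^{1/2}$; this delivers \eqref{eq:hell_conv_rand_result_forward_model} with $s_1=2\rho^\ast/(\rho^\ast-2)$, $s_2=2$.

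Two side conditions must be verified at each application of the final step. First, the smallness constraint in Corollary~\ref{corollary:bound_on_misfiterror_intermsof_forwardmodelerror} --- that the relevant $L^s_{\mu_0}(\UU)$-norm of $\EE_{\nu_N}[\norm{G-G_N}^{2q}]^{1/q}$ be at most $1$ --- and, equivalently, the absorption of the linear term $\norm{G-G_N}$ into $\norm{G-G_N}^2$, both hold for all sufficiently large $N$ because \eqref{eq:convergence_hypothesis_of_forward_models} drives the relevant norm to zero; the passage between the various $\nu_N$-moment orders and between the $\mu_0$-exponents $2$, $2p^\ast/(p^\ast-1)$, $2\rho^\ast/(\rho^\ast-1)$ that occur uses only Jensen's inequality on $(\ProbSpace,\nu_N)$ and the nesting $\norm{\,\cdot\,}_{L^a_{\mu_0}(\UU)}\le\norm{\,\cdot\,}_{L^b_{\mu_0}(\UU)}$ for $a\le b$, the $s$ fixed in \eqref{eq:convergence_hypothesis_of_forward_models} having been chosen so as to dominate all of these --- which is also precisely what lets the single hypothesis \eqref{eq:convergence_hypothesis_of_forward_models} deliver all four conclusions at once. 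Second, one must confirm the integrability of $\Misfit$ itself demanded by the Corollary/Proposition step: in case~\ref{thm:cvgce_of_random_measures_1} this is immediate from $\exp(\Misfit)\in L^{p^\ast}_{\mu_0}(\UU)$, and in case~\ref{thm:cvgce_of_random_measures_2} it follows from $\EE_{\nu_N}[\exp(\rho^\ast\Misfit_N)]\in L^1_{\mu_0}(\UU)$ and $\Misfit_N\ge 0$ together with the quadratic bound $\Misfit\lesssim\Misfit_N+\norm{G-G_N}^2$ and the moment control on $\norm{G-G_N}$ coming from \eqref{eq:convergence_hypothesis_of_forward_models}. It remains only to check that the constants $C$, $D$ accumulated along the chain --- built from $C_\Gamma$, $Z$, $C_3$, $C_4$, and either $\norm{\exp(\Misfit)}_{L^{p^\ast}_{\mu_0}(\UU)}$ or $\norm{\EE_{\nu_N}[\exp(\rho^\ast\Misfit_N)]}_{L^1_{\mu_0}(\UU)}$ --- do not depend on $N$, which is so since each of these quantities is $N$-independent by hypothesis.

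The part I expect to be delicate is not any single estimate but the exponent bookkeeping that threads through the whole argument: one must keep straight three interacting layers of H\"older conjugation --- the $p_i'$ and $q_i'$ internal to Theorems~\ref{thm:hell_conv_marg} and \ref{thm:hell_conv_rand}, the conjugate exponent used in the Cauchy--Schwarz/H\"older split against $\Misfit(u)^{1/2}$, and the fixed pair $(q,s)$ prescribed in \eqref{eq:convergence_hypothesis_of_forward_models} --- and verify, case by case, that the smallness and integrability side conditions are genuinely met and that the somewhat asymmetric terminal choice $s_1=2\rho^\ast/(\rho^\ast-2)$, $s_2=2$ in case~\ref{thm:cvgce_of_random_measures_2}(ii) is precisely the one that the chain forces.
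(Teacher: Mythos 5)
Your proposal follows the same chain as the paper's proof: Lemma~\ref{lemma:necessary_condition_of_convergence_of_forward_models} (with $C_0=0$ from the quadratic form) to secure Assumption~\ref{asmp:goodsuffcon_for_hellconvthms}, then Lemma~\ref{lemma:goodsuffcon_for_hell_conv_thms_for_asmp_set_1} or \ref{lemma:goodsuffcon_for_hell_conv_thms_for_asmp_set_2} to fix the exponents in Theorems~\ref{thm:hell_conv_marg} and \ref{thm:hell_conv_rand}, and finally Corollary~\ref{corollary:bound_on_misfiterror_intermsof_forwardmodelerror} plus $L^p$-nesting to pass from misfit errors to forward-model errors; the exponent bookkeeping you record matches the paper's exactly.

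The one place you genuinely diverge is case~(\ref{thm:cvgce_of_random_measures_2})(ii), and your instinct there is sound: the paper simply invokes Corollary~\ref{corollary:bound_on_misfiterror_intermsof_forwardmodelerror} with $q = 2q_1' = 2\rho^\ast/(\rho^\ast-2)$ and $s = 2q_2' = 2$, which violates the corollary's stated hypothesis $1 \leq q \leq s$ (since $\rho^\ast > 2$ forces $2\rho^\ast/(\rho^\ast-2) > 2$). Your workaround --- feeding \eqref{eq:ae_in_u_bound_absval_error_potentials_01b} directly into the $L^2_{\mu_0}(\UU)$-norm, Cauchy--Schwarz against $\Misfit^{1/2}$, Jensen in $\nu_N$ --- is exactly the corollary's own proof rerun for $s < q$, where the only change is that \eqref{eq:gen_triangle_inequality} with exponent $s/q \geq 1$ gets replaced by the subadditivity $(a+b)^{\theta} \leq a^{\theta}+b^{\theta}$ for $0<\theta\leq 1$; either route closes the gap. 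You are also more careful than the paper on the integrability of $\Misfit$ in case~(\ref{thm:cvgce_of_random_measures_2}): the paper deduces ``$\Misfit$ has moments of all orders'' from $0<Z<\infty$ alone, which does not follow (a nonnegative $\Misfit$ can have $Z>0$ yet no finite first moment), whereas your derivation via $\Misfit \lesssim \Misfit_N + \norm{G-G_N}^2$ together with the exponential integrability of $\Misfit_N$ and the moment control from \eqref{eq:convergence_hypothesis_of_forward_models} actually delivers $\Misfit \in L^s_{\mu_0}(\UU)$ for the required $s$. In short: correct, same architecture, with two small repairs the paper itself could use.
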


The proof of Theorem \ref{thm:cvgce_of_random_measures} consists of tracking the dependence of the parameters over the sequential application of the preceding results, all of which are used.

Case (\ref{thm:cvgce_of_random_measures_1}) applies in the situation where the random approximations $\Misfit_{N}$ are uniformly bounded from above; as discussed earlier, this condition is satisfied in the case that the misfit $\Misfit$ is associated to a bounded forward model and the data take values in a bounded subset of $\YY=\Reals^{J}$.
Note that the topology of the convergence of $G_{N}$ to $G$ is quantified by $s$ and $q$, and that $s$ depends on the parameter $p^{\ast}$ that quantifies the exponential $\mu_{0}$-integrability of the misfit $\Misfit$.
In particular, the faster the exponential decay of the positive tail of $\Misfit$ (i.e.\ the larger the value of $p^{\ast}$), the stronger the topology of convergence of $G_{N}$ to $G$.

In contrast to case (\ref{thm:cvgce_of_random_measures_1}), case (\ref{thm:cvgce_of_random_measures_2}) does not assume that the misfit $\Misfit$ is exponentially integrable or that the random approximations $\Misfit_{N}$ are uniformly bounded from above $\nu_{N}$-almost surely.
Instead, exponential integrability of the random misfit $\Misfit_{N}$ is required.
Another difference is that the exponential integrability parameter $\rho^{\ast}$ determines the strength of the topology of convergence of the random forward models, not only with respect to the $\mu_{0}$-topology, but also to the $\nu_{N}$-topology as well.
\section{Application: randomised misfit models}
\label{sec:random-misfit}

This section considers a particular Monte Carlo approximation $\Misfit_{N}$ of a quadratic potential $\Misfit$, proposed by \citet{NemirovskiJuditskyLanShapiro:2008, ShapiroDentchevaRuszczynski:2009}, and further applied and analysed in the context of BIPs by \citet{LeMyersBuiThanhNguyen:2017}.
This approximation is particularly useful when the data $y \in \R^{J}$ has very high dimension, so that one does not wish to interrogate every component of the data vector $y$, or evaluate every component of the model prediction $G(u)$ and compare it with the corresponding component of $y$.

Let $\sigma$ be an $\Reals^{J}$-valued random vector with mean zero and identity covariance, and let $\sigma^{(1)}, \dots, \sigma^{(N)}$ be independent and identically distributed copies (samples) of $\sigma$. %, and let $\Sigma_{N} \defeq \frac{1}{\sqrt{N}} [ \sigma^{(1)} \cdots \sigma^{(N)} ] \in \Reals^{J \times N}$.
We then have the following approximation:
\begin{align*}
	\Misfit(u)
	& \defeq \frac{1}{2} \Norm{ \Gamma^{-1/2} ( y - G(u) ) }^{2} \\
	%& = \frac{1}{2} \bigl( \Gamma^{-1/2} ( y - G(u) ) \bigr)^{\transpose} \bigl( \Gamma^{-1/2} ( y - G(u) ) \bigr) \\
	%& = \frac{1}{2} \bigl( \Gamma^{-1/2} ( y - G(u) ) \bigr)^{\transpose} I_{J \times J} \bigl( \Gamma^{-1/2} ( y - G(u) ) \bigr) \\
	& = \frac{1}{2} \bigl( \Gamma^{-1/2} ( y - G(u) ) \bigr)^{\transpose} \EE [ \sigma \sigma^{\transpose} ] \bigl( \Gamma^{-1/2} ( y - G(u) ) \bigr) \\
	%& = \frac{1}{2} \EE \bigl[ \bigl( \Gamma^{-1/2} ( y - G(u) ) \bigr)^{\transpose} \sigma \sigma^{\transpose} \bigl( \Gamma^{-1/2} ( y - G(u) ) \bigr) \bigr] \\
	& = \frac{1}{2} \EE \biggl[ \bigabsval{ \sigma^{\transpose} \bigl( \Gamma^{-1/2} ( y - G(u) ) \bigr) }^{2} \biggr] \\
	& \approx \frac{1}{2 N} \sum_{i = 1}^{N} \bigabsval{ {\sigma^{(i)}}^{\transpose} \bigl( \Gamma^{-1/2} ( y - G(u) ) \bigr) }^{2} \\
	& \qefed \Misfit_{N}(u) .
\end{align*}
%	& = \frac{1}{2} \Norm{ \Sigma_{N}^{\transpose} \bigl( \Gamma^{-1/2} ( y - G(u) ) }^{2}  \\
The analysis and numerical studies in \citet[Sections 3--4]{LeMyersBuiThanhNguyen:2017} suggest that a good choice for the random vector $\sigma$ would be one with independent and identically distributed (i.i.d.) entries from a sub-Gaussian probability distribution on $\Reals$.
Examples of sub-Gaussian distributions considered include
\begin{compactenum}[(a)]
	\item the standard Gaussian distribution:
	$\sigma_j \sim \Normal(0, 1)$, for $j=1, \dots, J$;
	and
	\item the $\ell$-sparse distribution:
	for $\ell \in [0,1)$, let $s \defeq \frac{1}{1-\ell} \geq 1$ and set, for $j=1, \dots, J$,
	\[
		\sigma_j \defeq \sqrt{s}
		\begin{cases}
			1, &\text{with probability $\frac{1}{2s}$,} \\
			0, &\text{with probability $\ell = 1- \frac{1}{s}$,} \\
			-1, &\text{with probability $\frac{1}{2s}$.}
		\end{cases}
	\]
\end{compactenum}

The randomised misfit $\Misfit_N$ can provide computational benefits in two ways.
Firstly, a single evaluation of $\Misfit_N$ can be made cheap by choosing the $\ell$-sparse distribution for $\sigma$, with large sparsity parameter $\ell$. 
This choice ensures that a large proportion of the entries of each sample $\sigma^{(i)}$ will be zero, significantly reducing the cost to compute the required inner products in $\Misfit_N$, since there is no need to compute the components of the data or model vector that will be eliminated by the sparsity pattern.
The value of $N$ of course also influences the computational cost.
It is observed by \citet{LeMyersBuiThanhNguyen:2017} that, for large $J$ and moderate $N \approx 10$, the random potential $\Misfit_{N}$ and the original potential $\Misfit$ are already very similar, in particular having approximately the same minimisers and minimum values.
Statistically, these correspond to the maximum likelihood estimators under $\Misfit$ and $\Misfit_{N}$ being very similar; after weighting by a prior, this corresponds to similarity of maximum a posteriori (MAP) estimators.

The second benefit of the randomised misfit approach, and the main motivation for its use in  \citet{LeMyersBuiThanhNguyen:2017}, is the reduction in computational effort needed to compute the MAP estimate.
This task involves the solution of a large-scale optimisation problem involving $\Misfit$ in the objective function, which is typically done using inexact Newton methods.
It is shown by \citet{LeMyersBuiThanhNguyen:2017} that the required number of evaluations of the forward model $G$ and its adjoint is drastically reduced when using the randomised misfit $\Misfit_N$ as opposed to using the true misfit $\Misfit$, approximately by a factor of $\frac{J}{N}$.

The aim of this section is to show that the use of the randomised misfit $\Misfit_N$ does not only lead to the MAP estimate being well-approximated, but in fact the whole Bayesian posterior distribution.
Thus, the corresponding conjecture is that the ideal and deterministic posterior $\rd \mu(u) \propto \exp( - \Misfit(u)) \, \rd \mu_{0}(u)$ is well approximated by the random posterior $\rd \mu_{N}^{\sample}(u) \propto \exp( - \Misfit_{N}(u)) \, \rd \mu_{0}(u)$.
Indeed, via Theorem~\ref{thm:hell_conv_rand}, we have the following convergence result for the case of a sparsifying distribution:

\begin{proposition}
	\label{prop:ell_sparse_conv}
	Suppose that the entries of $\sigma$ are i.i.d.\ $\ell$-sparse, for some $\ell \in [0, 1)$, and that $\Misfit \in L^{2}_{\mu_{0}}(\UU)$.
	Then there exists a constant $C$, independent of $N$, such that
	\begin{equation}
		\label{eq:ell_sparse_conv_1}
		\left(\EE_{\nu_N} \bigl[ \dhh \bigl( \mu, \mu_{N}^{\sample} \bigr)^2 \bigr] \right)^{1/2} \leq \frac{C}{\sqrt{N}}.
	\end{equation}
\end{proposition}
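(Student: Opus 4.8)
The plan is to apply Theorem~\ref{thm:hell_conv_rand} with the crudest admissible exponents, $q_1 = q_2 = +\infty$ (so $q_1' = q_2' = 1$), under which \eqref{eq:hell_conv_rand_result} reads
$\EE_{\nu_N}[\dhh(\mu,\mu_N^{\sample})^2]^{1/2} \le (D_1 + D_2)\,\bignorm{ \EE_{\nu_N}[\absval{\Misfit - \Misfit_N}^2]^{1/2} }_{L^2_{\mu_0}(\UU)}$. The proof then has two ingredients: (i) checking that hypotheses~(\ref{item:hell_conv_rand_a}) and~(\ref{item:hell_conv_rand_b}) hold with constants $D_1, D_2$ independent of $N$, so that $\mu_N^{\sample}$ is well defined and the theorem applies; and (ii) showing that the misfit-error term on the right is $O(N^{-1/2})$ in $L^2_{\mu_0}(\UU)$, using $\Misfit \in L^2_{\mu_0}(\UU)$.

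For ingredient (i), I would first note that $\Misfit$ is a squared norm and $\Misfit_N$ a sum of squares, so both are non-negative; hence $e^{-\Misfit/2}, e^{-\Misfit_N/2} \le 1$ everywhere, the left-hand side of hypothesis~(\ref{item:hell_conv_rand_a}) is at most $2^2 = 4$, and $(e^{-\Misfit}+e^{-\Misfit_N})^2 \le 4$. For hypothesis~(\ref{item:hell_conv_rand_b}) the key point is that the $\ell$-sparse vector $\sigma$ has entries in $\{0, \pm\sqrt{s}\}$ with $s = \tfrac{1}{1-\ell}$, so $\norm{\sigma^{(i)}}^2 \le sJ$ almost surely and therefore $\Misfit_N(u) \le sJ\,\Misfit(u)$ pointwise in $u$ and $\omega$. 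Jensen's inequality then gives $Z_N^{\sample} \ge \exp(-sJ\,\EE_{\mu_0}[\Misfit]) > 0$ while non-negativity of $\Misfit_N$ gives $Z_N^{\sample} \le 1$; since $\Misfit \in L^2_{\mu_0}(\UU) \subset L^1_{\mu_0}(\UU)$, these bounds are finite and uniform in $N$ and $\omega$, and combined with $(e^{-\Misfit}+e^{-\Misfit_N})^2 \le 4$ they yield hypothesis~(\ref{item:hell_conv_rand_b}) with, say, $D_2 = 4\max\{Z^{-3}, \exp(3sJ\,\EE_{\mu_0}[\Misfit])\}$. (This is the argument of Lemma~\ref{lemma:goodsuffcon_for_hell_conv_thms_for_asmp_set_1} specialised to $C_0 = 0$, $C_4 = sJ\,\EE_{\mu_0}[\Misfit]$; crucially one does \emph{not} need exponential integrability of $\Misfit$ here, precisely because the sparsifying vectors are bounded.)

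For ingredient (ii), observe that $\Misfit_N(u)$ is the average of $N$ i.i.d.\ copies of $\tfrac12 \absval{\sigma^{\transpose} \Gamma^{-1/2}(y - G(u))}^2$, whose mean is exactly $\Misfit(u)$ by the identity displayed just before the proposition; writing $r(u) \defeq \Gamma^{-1/2}(y - G(u)) \in \R^J$, this gives $\EE_{\nu_N}[\absval{\Misfit(u) - \Misfit_N(u)}^2] = \tfrac{1}{4N}\Var_{\sigma}[\absval{\sigma^{\transpose} r(u)}^2]$. Expanding the fourth moment of $\sigma^{\transpose} r(u) = \sum_j \sigma_j r_j(u)$ using independence of the entries together with $\EE[\sigma_j^2] = 1$ and $\EE[\sigma_j^4] = s$ yields $\EE[\absval{\sigma^{\transpose} r(u)}^4] = (s-3)\sum_j r_j(u)^4 + 3\norm{r(u)}^4$, hence $\Var_{\sigma}[\absval{\sigma^{\transpose} r(u)}^2] = (s-3)\sum_j r_j(u)^4 + 2\norm{r(u)}^4$. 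Using $0 \le \sum_j r_j^4 \le \norm{r}^4$ and treating the cases $s \ge 3$ and $1 \le s < 3$ separately (in the latter the coefficient $s-3$ is negative) gives $\Var_{\sigma}[\absval{\sigma^{\transpose} r(u)}^2] \le 4\max\{s-1, 2\}\,\Misfit(u)^2$, so $\EE_{\nu_N}[\absval{\Misfit - \Misfit_N}^2]^{1/2} \le (\max\{s-1,2\}/N)^{1/2}\,\Misfit(u)$ and therefore $\bignorm{ \EE_{\nu_N}[\absval{\Misfit - \Misfit_N}^2]^{1/2} }_{L^2_{\mu_0}(\UU)} \le (\max\{s-1,2\}/N)^{1/2}\,\norm{\Misfit}_{L^2_{\mu_0}(\UU)}$, which is finite by hypothesis.

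Substituting into \eqref{eq:hell_conv_rand_result} then gives \eqref{eq:ell_sparse_conv_1} with $C = (D_1 + D_2)\sqrt{\max\{s-1,2\}}\,\norm{\Misfit}_{L^2_{\mu_0}(\UU)}$, manifestly independent of $N$. I expect the main obstacle to be ingredient (ii): recognising the misfit error as a sample mean so that its squared $L^2(\nu_N)$-size is a variance of order $N^{-1}$, and then performing the fourth-moment computation for the $\ell$-sparse law carefully enough that the bound (a) comes out proportional to $\Misfit(u)^2$ — which is exactly what makes the hypothesis $\Misfit \in L^2_{\mu_0}(\UU)$ the natural one — and (b) is handled correctly when $s < 3$. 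A secondary subtlety is the uniform-in-$N$ lower bound on $Z_N^{\sample}$ in ingredient (i), which rests on the boundedness of the sparsifying vectors.
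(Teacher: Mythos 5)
Your proof is correct and follows essentially the same route as the paper's: verify hypotheses (\ref{item:hell_conv_rand_a}) and (\ref{item:hell_conv_rand_b}) of Theorem~\ref{thm:hell_conv_rand} with $q_1=q_2=\infty$ using non-negativity of the misfits and the almost-sure bound $\absval{\sigma_j}\leq\sqrt{s}$ (which gives a uniform-in-$N$ lower bound on $Z_{N}^{\sample}$), and then identify $\EE_{\nu_N}\bigl[\absval{\Misfit-\Misfit_N}^2\bigr]$ as an $N^{-1}$-scaled Monte Carlo variance bounded by a multiple of $\Misfit(u)^2$. The one substantive difference is in the variance bound: the paper uses the crude inequality $\bigl(\sum_j x_j\bigr)^4\leq J^3\sum_j x_j^4$ together with $\norm{x}_4\leq\norm{x}_2$ and obtains the constant $\sqrt{J^3\,\EE[\sigma_j^4]-1}$, whereas your exact fourth-moment expansion via independence of the entries yields the dimension-free constant $\sqrt{\max\{s-1,2\}}$ --- a genuine sharpening of the constant in \eqref{eq:ell_sparse_conv_2}, with the same $N^{-1/2}$ rate.
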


(In this section, $\nu_N$ plays the role of the distribution of $\sigma^{(1)}, \dots, \sigma^{(N)}$.)
As the proof reveals, a valid choice of the constant $C$ in \eqref{eq:ell_sparse_conv_1} is
\begin{equation}
	\label{eq:ell_sparse_conv_2}
	C 
	= (D_1 + D_2) \sqrt{ J^3 \EE_{\nu_N} [\sigma_j^4] - 1 } \norm{ \Misfit }_{L^{2}_{\mu_{0}}(\UU)} \\
	= (D_1 + D_2) \sqrt{ J^3 s^{3} - 1 } \norm{ \Misfit }_{L^{2}_{\mu_{0}}(\UU)},
\end{equation}
where the constant $(D_1 + D_2)$ is as in Theorem~\ref{thm:hell_conv_rand}.
Thus, as one would expect, the accuracy of the approximation decreases as $\sigma$ approaches the complete sparsification case $\ell = 1$ or as the data dimension $J$ increases, but always with the same convergence rate $N^{-1/2}$ in terms of the approximation dimension $N$.

\begin{remark}
	\label{rmk:ell_sparse_conv_Gaussian}
	The proof of Proposition~\ref{prop:ell_sparse_conv} can be modified to yield the same result for arbitrary i.i.d.\ $\sigma_j$ with bounded support, though the sparsifying case is obviously the one with the easiest interpretation.
	However, extending Proposition~\ref{prop:ell_sparse_conv} to the case of i.i.d.\ Gaussian random variables $\sigma_j \sim \mathcal N(0,1)$ appears to be problematic.
	In the proof, we crucially make use of the bound $\absval{ \sigma_j } \leq \sqrt{s}$ to verify Assumption~(\ref{item:hell_conv_rand_b}) of Theorem~\ref{thm:hell_conv_rand}.
	For Gaussian random variables, we would similarly need an \emph{$N$-independent} bound on the exponential moments of
	\[
		\max_{\substack{1 \leq i \leq N \\ 1 \leq j \leq J}} \sigma_j^{(i)} ,
	\]
	which is not possible.
	We leave this as an interesting question for future work:
	would a different proof strategy yield convergence in the Gaussian case, or is the Gaussian setting genuinely one in which the MAP problem is well approximated but the BIP is not?
\end{remark}

\section{Application: probabilistic integration of dynamical systems}
\label{sec:pnode}

The data-based inference of initial conditions or governing parameters for dynamical problems arises frequently in scientific applications, a prime example being data assimilation in numerical weather prediction \citep{LawStuartZygalakis:2015, ReichCotter:2015}.
In this setting, the Bayesian likelihood involves a solution of the mathematical model for the dynamics, which is typically an ODE or time-dependent PDE;
we focus here on the ODE situation.
Even when the governing ODE is deterministic, it may be profitable to perform a probabilistic numerical solution:
possible motivations for doing so include the representation of model error (model inadequacy) in the ODE itself, and the impact of discretisation uncertainty.
When such a probabilistic solver is used for the ODE, the likelihood becomes random in the sense considered in this paper.

Random approximate solution of deterministic ODEs is an old idea \citep{Diaconis:1988, Skilling:1992} that has received renewed attention in recent years \citep{Conrad:2016, HennigOsborneGirolami:2015, LieStuartSullivan:2017, Schober:2014}.
As random forward models, these probabilistic ODE solvers are amenable to the analysis of Section~\ref{sec:general}.
% Given a Hilbert space $\Reals^{d}$ with inner product $\innerprod{\quark}{\quark}$ and induced norm $\norm{\quark}$ and a function $f \colon \Reals^{d} \to \Reals^{d}$, consider the following initial value problem on $\Reals^{d}$:
Let $f \colon \Reals^d \to \Reals^d$ and consider the following parameter-dependent initial value problem for a fixed, parameter-independent duration $T>0$:
\begin{align}
	\label{eq:the_ivp}
	\frac{\rd}{\rd t} z(t;u) & = f(z(t;u);u), & \text{for $0\leq t \leq T$,} \\
	\notag
	z(0;u) & = z_{0}(u).
\end{align}
% for some fixed duration $T>0$.

In the context of the BIP presented in Section~\ref{sec:setup}, the unknown parameter $u$ will appear in the definition of the initial condition $z_0 = z_0(u)$ or the right-hand side $f(z(t)) = f(z(t); u)$, resulting in the parameter-dependent solution $(z(t; u))_{t\in[0,T]}$.
Define the solution operator
\begin{equation}
	\label{eq:deterministic_solution_operator}
	S \colon \UU \to C([0,T];\Reals^{d}),\quad u\mapsto S(u) \defeq (z(t;u))_{t\in [0,T]} ,
\end{equation}
where $(z(t;u))_{t\in [0,T]}$ solves \eqref{eq:the_ivp}.
We equip $C([0,T];\Reals^d)$ with the supremum norm.

% The parameter-to-observation map $G$ is then given by $G(u) = O_J(z(\quark; u))$, for some observation operator $O_J \colon C([0,T];\Reals^d) \to (\Reals^d)^{\absval{ J }}$ that depends on a nonempty, finite set $J=\{t_i\in [0,T] \mid t_i\neq t_j\text{ for }i\neq j,\ i,j\in J \}$, and $O_J(z(\quark; u))\defeq (z(t_{j}; u))_{j \in J}\in(\Reals^d)^{\absval{ J }}$ is a time series of evaluations of the solution at the set of time points in $J$.
% fixed and finite set of time points $(t_{j})_{j\in J}$, or time series of some components of $z(\quark; u)$.
For notational convenience, we will for the majority of this section not indicate the dependence of $z_0$ or $f$ on $u$.
We will, however, explicitly track the dependence on $z_0$ and $f$ of the error analysis below.

Let $\odeflow^{t} \colon \Reals^{d} \to \Reals^{d}$ be the flow map associated to the initial value problem \eqref{eq:the_ivp}, i.e.\ $\odeflow^{t}(z_{0}) \coloneqq z(t; u) = S(u)(t)$.
Fix a time step $\tstep > 0$ such that $N \defeq T / \tstep \in \Naturals$, and a time grid
\begin{equation}
	\label{eq:time_grid}
	t_{k} \defeq k \tstep \text{ for } k \in [N] \defeq \{0, 1, \dotsc, N \}.
\end{equation}
We denote by $z_{k} \defeq z(t_{k}) \equiv \odeflow^{\tstep}(z_{k - 1})$ the value of the exact solution to \eqref{eq:the_ivp} at time $t_{k}$.
We shall sometimes abuse notation and write $[N] = \{0, 1, \dotsc, N - 1 \}$ or $[N] = \{ 1, 2, \dotsc, N \}$.

To a single-step numerical integration method (e.g.\ a Runge--Kutta method of some order) we shall associate a numerical flow map $\numflow^{\tstep} \colon \Reals^{d} \to \Reals^{d}$.
The numerical flow map approximates the sequence $(z_{k})_{k\in [N]}$ by a sequence $(Z'_{k})_{k\in [N]}$, where $Z'_{k} \defeq \numflow^{\tstep}(Z'_{k - 1})$.
A fundamental task in numerical analysis is to determine sufficient conditions for convergence of the sequence $(Z'_{k})_{k\in [N]}$ to $(z_{k})_{k\in [N]}$.
The investigations of \citet{Conrad:2016} and \citet{LieStuartSullivan:2017} concern a similar task in the context of uncertainty quantification.
Given $\tstep>0$, consider a collection $(\xi_k)_{k\in[N]}$ of stochastic processes $\xi_k \colon \Omega\times [0,\tstep] \to \Reals^{d}$ having almost-surely continuous paths.
Define a stochastic process $(Z_t)_{t\in [0,T]}$ in terms of a new randomised integrator
\begin{equation}
	\label{eq:randomised_numerical_integrator}
	Z(t_{k+1};u) \defeq \numflow^{\tau}(Z(t_{k};u)) + \xi_{k}(\tau).
\end{equation}
The stochastic processes $(\xi_{k})_{k\in [N]}$ are intended to capture the effect of uncertainties, e.g.\ those that arise due to properties of the vector field that are not resolved by the time grid \eqref{eq:time_grid} associated to the time step $\tstep$.
We extend the definition \eqref{eq:randomised_numerical_integrator} to continuous time via
\begin{equation}
	\label{eq:randomised_numerical_integrator_continuous_time}
	Z(t;u)\defeq \numflow^{t-t_k}(Z(t_k;u))+\xi_k(t-t_k),\quad\text{for } t_k<t<t_{k+1}.
\end{equation}
We shall use the $(\xi_k)_{k\in[N]}$ to construct our random approximations to $\Misfit$.
Note therefore that, in order to be consistent with our assumption (see the third paragraph of Section~\ref{sec:general}) that the randomness in the approximation of $\Misfit$ is independent of the randomness in the parameter $u$ being inferred, we shall assume that the $(\xi_k)_{k\in [N]}$ do not depend on the parameter $u$.
% this is because, in order to be able to estimate the parameter $u\in \UU$, we need our model of the uncertainty to not depend on the parameter that we wish to estimate;
% cf.\ the discussion in Section~\ref{sec:general} in the paragraph preceding \eqref{eq:random_posterior}.
% From another point of view, since the distribution of the $(\xi_k)_{k\in[N]}$ plays the role of the measure $\nu_{N}$ from Section~\ref{sec:general}, and since $\nu_{N}$ does not depend on $u\in \UU$, it follows that the $(\xi_k)_{k\in[N]}$ cannot depend on $u$ either.
However, the map $\numflow^{\tstep}$ does depend on the parameter $u\in\UU$, because $\numflow^{\tstep}$ involves the vector field $f(\quark;u)$.

Define the random solution operator associated to the randomised integrator \eqref{eq:randomised_numerical_integrator_continuous_time}:
\begin{equation}
	\label{eq:random_solution_operator}
	 S_{N} \colon \UU \to C([0,T];\Reals^{d}),\quad u \mapsto S_{N}(u)\defeq (Z(t;u))_{t\in[0,T]},
\end{equation}
where $(Z(t;u))_{t\in[0,T]}$ satisfies \eqref{eq:randomised_numerical_integrator_continuous_time}, and is almost surely continuous.

Let $T_{J}\subset [0,T]$ be a strictly increasing sequence of time points, indexed by a finite, nonempty index set $J$ with cardinality $\absval{ J } \in \Naturals$.
Note that $T_{J}$ may coincide with the time grid defined in \eqref{eq:time_grid};
to increase the scope of the subsequent analysis however, we allow for $T_{J}$ to differ from \eqref{eq:time_grid}.
Let $\YY \defeq \Reals^{d \absval{ J }}$, and equip it with the topology induced by the standard Euclidean inner product.
Define the observation operator 
\begin{equation}
	\label{eq:observation_operator}
	O \colon C([0,T];\Reals^{d}) \to \YY,\quad \tilde{z}\mapsto O\left(\tilde{z}\right) \defeq (\tilde{z}(t_j))_{t_j\in T_J}, 
\end{equation}
which projects some $\tilde{z}\in C([0,T];\Reals^{d})$ to a finite-dimensional vector in $\YY$ constructed by stacking the $\Reals^d$-valued vectors that result from evaluating $\tilde{z}$ at the time points in $T_J$.
We take the norm on $\YY$ to be $\norm{\quark}_{\ell^{d\absval{ J }}_2}$.

Given the operators $S$, $O$, and $S_{N}$ defined in \eqref{eq:deterministic_solution_operator}, \eqref{eq:observation_operator}, and \eqref{eq:random_solution_operator}, we define the forward operators $G,G_{N} \colon \UU \to \YY$ by
\begin{equation}
	\label{eq:forward_operators}
	G\defeq O\circ S,\quad G_{N}\defeq O\circ S_{N}.
\end{equation}
The associated likelihoods are the quadratic misfits given by \eqref{eq:quadratic_misfits} with some fixed, positive-definite matrix $\Gamma$.

We define the continuous-time error process by 
\begin{equation}
	\label{eq:error_continuous_time}
	e(t;u)\defeq z(t;u)-Z(t;u),\quad 0\leq t\leq T.
\end{equation}
Since $T_J$ is a proper subset of $[0,T]$, it follows that
\begin{equation}
	\label{eq:error_in_forward_operators}
	\norm{ G_{N}(u) - G(u) } \leq \absval{J}\sup_{0\leq t\leq T} \norm{e(t;u)}_{\ell^d_2}.
\end{equation}
This completes our formulation of the probabilistic numerical integration of the ODE \eqref{eq:the_ivp} as a random likelihood model of the type considered in Section~\ref{sec:general}.

\subsection{Convergence in continuous time for Lipschitz flows}
\label{ssec:pnode-lipschitz}

In this section, we quote some assumptions and results from \citet{LieStuartSullivan:2017}.
The vector field $f$ in \eqref{eq:the_ivp} induces a flow $\odeflow^{\tstep} \colon \Reals^{d} \to \Reals^{d}$ by 
\begin{equation}
	\label{eq:flow_map}
	\odeflow^{\tstep}(a) = a + \int_0^{\tstep} f(\odeflow^t(a)) \, \rd t.
\end{equation} 

\begin{assumption}[Assumption~3.1, \citet{LieStuartSullivan:2017}]
	\label{ass:exact_flow}
	The vector field $f$ admits $0<\tstep^\ast\leq 1$ and $C_\odeflow\geq 1$, such that for $0 < \tstep < \tstep^\ast$, the flow $\odeflow^{\tstep} \colon \Reals^{d} \to \Reals^{d}$ defined by \eqref{eq:flow_map} is globally Lipschitz, with
	\begin{equation*}
		\norm{\odeflow^{\tstep}(z_0)-\odeflow^{\tstep}(v_0)} \leq (1+C_\odeflow \tstep) \norm{ z_0 - v_0 },\quad\text{for all $z_0, v_0\in\Reals^{d}$.}
	\end{equation*}
\end{assumption}

A globally Lipschitz vector field $f$ in \eqref{eq:the_ivp} yields a flow map $\odeflow^t$ that satisfies Assumption~\ref{ass:exact_flow}.
However, vector fields that satisfy a one-sided Lipschitz condition also have the same property.
Such vector fields have been studied in the numerical analysis literature for both ordinary and stochastic differential equations in the last four decades; see, e.g.\ \citep{Butcher:1975}, and the references cited in Section 3.1 of \citep{HighamStuartMao:2002}.

Recall that $\numflow^{\tstep} \colon \Reals^{d} \to \Reals^{d}$ represents the numerical method that we use to integrate 
\eqref{eq:the_ivp}.

\begin{assumption}[Assumption~3.2, \citet{LieStuartSullivan:2017}]
	\label{ass:numerical_flow}
	The numerical method $\numflow^{\tstep}$ has uniform local truncation error of order $q + 1$:
	for some constant $C_\numflow \geq 1$ that does not depend on $\tstep$,
	\[
		\sup_{v \in \Reals^{d}} \norm{ \numflow^{\tstep}(v) - \odeflow^{\tstep}(v) } \leq C_\numflow \tstep^{q + 1}.
	\]
\end{assumption}

The assumption above is satisfied for both single-step and multistep numerical methods that are obtained by considering vector fields in $C^q(\Reals^d)$, provided that the $q^{\text{th}}$ derivatives are bounded; see Section III.2 of \citep{HairerNorsettWanner:2009}.
We emphasise that the above assumption is made to simplify the analysis, and that the convergence results below extend to the case where the uniform bound does not hold; see Section 4 of \citep{LieStuartSullivan:2017}.

Now recall the collection $(\xi_k(\tstep))_{k\in[N]}$ of random variables, where $\xi_k(\tstep)$ is used in \eqref{eq:randomised_numerical_integrator}.

\begin{assumption}[Assumption~5.1, \citet{LieStuartSullivan:2017}]
	\label{ass:p-R_regularity_condition_on_noise}
	The stochastic processes $(\xi_{k})_{k\in\mathbb{N}}$ admit $p \geq 1$, $R\in \Naturals\cup\{+\infty\}$, and $C_{\xi,R}\geq 1$, independent of $k$ and $\tstep$, such that for all $1\leq r\leq R$ and all $k\in \Naturals$,
	\begin{equation*}
		\EE_{\nu_{N}} \left[\sup_{0 < t \leq T/N} \norm{\xi_{k}(t)}^{r} \right] \leq \left(C_{\xi,R}\left(\frac{T}{N}\right)^{p+1/2}\right)^r.
	\end{equation*}	
\end{assumption}
(In this section, $\nu_{N}$ plays the role of the distribution of the $\xi_{k}$'s.)
%\todo{This is the first time $\nu_N$ appears in this section, maybe specify that it's the distribution of the $\xi$'s?}
The assumption above quantifies the regularity of the $(\xi_k)_{k\in[N]}$ by specifying how many moments each $\xi_k(t)$ has and how quickly these decay with $\tstep=T/N$.
We do not require the $(\xi_k(t))_{k\in[N]}$ to have zero mean, to be independent, or to be identically distributed.
It is shown in Section 5.2 of \citep{LieStuartSullivan:2017} that, for example, the integrated Brownian motion process satisfies Assumption~\ref{ass:p-R_regularity_condition_on_noise}.
The integrated Brownian motion process has been used as a state-independent model of the uncertainty in the off-grid behaviour of solutions to ODEs in \citep{Conrad:2016, Schober:2014, Chkrebtii:2016}.

We now consider the following convergence theorem:
% \begin{theorem}[Theorem 3.5, \citet{LieStuartSullivan:2017}]	\label{thm:convergence_without_iid_mean_zero_noise}
% 	Let $n\in \Naturals$. Suppose that Assumptions~\ref{ass:exact_flow}, \ref{ass:numerical_flow}, and \ref{ass:p-R_regularity_condition_on_noise} hold with $\tstep^\ast\leq 1$, $C_\odeflow$, $C_\numflow$, $q$, $p$, and $R$, and that $z(0;u)=Z(0;u)=z_0(u)$, with $z_0(u)$ in \eqref{eq:the_ivp}. Then for $0 < \tstep < \tstep^\ast$,
% \begin{equation}\label{eq:convergence_without_iid_mean_zero_noise}
% \EE_{\nu_{N}}\left[\max_{\ell\in[K]}\norm{e_\ell(u)}^n\right]\leq \overline{C}\tstep^{n(q\wedge (p-1/2))},
% \end{equation}

\begin{theorem}[Theorem~5.2, \citet{LieStuartSullivan:2017}]
	\label{thm:strong_error_sup_inside_continuous}
	Suppose that $e_0=0$, and suppose that Assumptions~\ref{ass:exact_flow}, \ref{ass:numerical_flow}, and \ref{ass:p-R_regularity_condition_on_noise} hold with parameters $\tstep^\ast$, $C_\odeflow$, $C_\numflow$, $q$, $C_{\xi,R}$, $p$, and $R$.
	Let $n\in\Naturals$, with $n\leq R$.
	Then, for all $T/\tstep^\ast<N$,
	\begin{align}
		\label{eq:strong_error_sup_inside_continuous}
		\EE_{\nu_{N}}\left[\sup_{0\leq t\leq T}\norm{e(t;u)}^n\right]\leq 3^{n-1}\left(\left(1+C_\odeflow \tstep^\ast\right)^n\overline{C}+C_\numflow^n(\tstep^\ast)^n+TC^n_{\xi,R}\right)\left(\frac{T}{N}\right)^{n(q\wedge (p-1/2))},
	\end{align}
	where 
	\begin{align*}
		\overline{C}&\defeq 2T\max\{(4C_\numflow)^n,(2 C_{\xi,R})^n\}\exp\left(TC_\odeflow(n,\tstep^\ast)\right)
		%\label{eq:C_overline}
		\\
		C_\odeflow(n,\tstep^\ast)&\defeq \left[(1+\tstep^\ast 2^{n-1})^2(1+\tstep^\ast C_\odeflow)^n-1\right](\tstep^\ast)^{-1}.
		%\label{eq:C_odeflow_n}
	\end{align*} 
\end{theorem}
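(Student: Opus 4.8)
The plan is a discrete Gr\"onwall estimate at the grid points \eqref{eq:time_grid}, followed by a short interpolation argument to recover the continuous-time supremum. Write $z_{k}\defeq z(t_{k};u)$, $Z_{k}\defeq Z(t_{k};u)$, and $e_{k}\defeq e(t_{k};u)=z_{k}-Z_{k}$, and recall that $z_{k+1}=\odeflow^{\tstep}(z_{k})$ while $Z_{k+1}=\numflow^{\tstep}(Z_{k})+\xi_{k}(\tstep)$ by \eqref{eq:randomised_numerical_integrator}. First I would split the one-step error as $e_{k+1}=\bigl(\odeflow^{\tstep}(z_{k})-\odeflow^{\tstep}(Z_{k})\bigr)+\bigl(\odeflow^{\tstep}(Z_{k})-\numflow^{\tstep}(Z_{k})\bigr)-\xi_{k}(\tstep)$, bound the first bracket by $(1+C_\odeflow\tstep)\norm{e_{k}}$ via Assumption~\ref{ass:exact_flow} and the second by $C_\numflow\tstep^{q+1}$ via Assumption~\ref{ass:numerical_flow}, and conclude $\norm{e_{k+1}}\leq(1+C_\odeflow\tstep)\norm{e_{k}}+C_\numflow\tstep^{q+1}+\norm{\xi_{k}(\tstep)}$; since $1+C_\odeflow\tstep\geq1$, the same inequality holds with $\max_{0\leq j\leq k}\norm{e_{j}}$ replacing $\norm{e_{k}}$ on both sides.

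The central step is to raise this recursion to the $n$-th power while keeping the coefficient of $(\max_{j\leq k}\norm{e_{j}})^{n}$ equal to $1+O(\tstep)$, so that iterating $N=T/\tstep$ times produces a constant bounded uniformly in $N$ rather than growing exponentially in $N$. To that end I would apply the convexity inequality $(a+b)^{n}\leq(1+\varepsilon)^{n-1}a^{n}+(1+\varepsilon^{-1})^{n-1}b^{n}$ twice with $\varepsilon=\tstep$, peeling off $C_\numflow\tstep^{q+1}$ and then $\norm{\xi_{k}(\tstep)}$. Using $(1+\tstep)^{n-1}\leq1+2^{n-1}\tstep$ and $(1+\tstep^{-1})^{n-1}\leq(2/\tstep)^{n-1}$ for $\tstep\leq1$, taking $\EE_{\nu_{N}}[\,\cdot\,]$, and invoking Assumption~\ref{ass:p-R_regularity_condition_on_noise} with $r=n\leq R$ to get $\EE_{\nu_{N}}[\norm{\xi_{k}(\tstep)}^{n}]\leq(C_{\xi,R}\tstep^{p+1/2})^{n}$, I obtain $M_{k+1}\leq\rho M_{k}+\gamma$ for $M_{k}\defeq\EE_{\nu_{N}}[\max_{0\leq j\leq k}\norm{e_{j}}^{n}]$, with $\rho=(1+2^{n-1}\tstep)^{2}(1+C_\odeflow\tstep)^{n}$ and $\gamma\leq2\max\{(4C_\numflow)^{n},(2C_{\xi,R})^{n}\}\,\tstep^{\,1+n(q\wedge(p-1/2))}$, the exponent on $\tstep$ being reached by absorbing the surplus powers using $q+1\geq2$, $p+1/2\geq3/2$, $\tstep\leq\tstep^\ast\leq1$. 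Since $e_{0}=0$ gives $M_{0}=0$, iterating yields $M_{N}\leq N\rho^{N}\gamma$, and $N\gamma=(T/\tstep)\gamma\leq2T\max\{(4C_\numflow)^{n},(2C_{\xi,R})^{n}\}(T/N)^{n(q\wedge(p-1/2))}$. Writing $\rho=1+\tstep g(\tstep)$ with $g(\tstep)=[(1+2^{n-1}\tstep)^{2}(1+C_\odeflow\tstep)^{n}-1]/\tstep$ a polynomial in $\tstep$ with nonnegative coefficients, hence nondecreasing, gives $\rho^{N}=(1+\tstep g(\tstep))^{T/\tstep}\leq e^{Tg(\tstep)}\leq e^{Tg(\tstep^\ast)}=\exp(TC_\odeflow(n,\tstep^\ast))$ whenever $\tstep<\tstep^\ast$ (i.e.\ $T/\tstep^\ast<N$); thus $M_{N}\leq\overline{C}\,(T/N)^{n(q\wedge(p-1/2))}$ with $\overline{C}$ exactly as in the statement.

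Finally I would pass to the continuous-time supremum. For $t\in(t_{k},t_{k+1})$, the extension \eqref{eq:randomised_numerical_integrator_continuous_time} and the same three-way split (with the intermediate-time versions of Assumptions~\ref{ass:exact_flow} and \ref{ass:numerical_flow}, bounding $1+C_\odeflow s\leq1+C_\odeflow\tstep^\ast$ and $C_\numflow s^{q+1}\leq C_\numflow\tstep^{q+1}$ for $s=t-t_{k}<\tstep$) yield $\norm{e(t;u)}\leq(1+C_\odeflow\tstep^\ast)\norm{e_{k}}+C_\numflow\tstep^{q+1}+\sup_{0<s\leq\tstep}\norm{\xi_{k}(s)}$, hence $\sup_{0\leq t\leq T}\norm{e(t;u)}\leq(1+C_\odeflow\tstep^\ast)\max_{k}\norm{e_{k}}+C_\numflow\tstep^{q+1}+\max_{k}\sup_{0<s\leq\tstep}\norm{\xi_{k}(s)}$. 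Applying $(a+b+c)^{n}\leq3^{n-1}(a^{n}+b^{n}+c^{n})$, taking $\EE_{\nu_{N}}$, using the bound on $M_{N}$ from the previous step, $C_\numflow^{n}\tstep^{n(q+1)}\leq C_\numflow^{n}(\tstep^\ast)^{n}(T/N)^{n(q\wedge(p-1/2))}$, and $\EE_{\nu_{N}}[\max_{k}\sup_{0<s\leq\tstep}\norm{\xi_{k}(s)}^{n}]\leq\sum_{k=0}^{N-1}\EE_{\nu_{N}}[\sup_{0<s\leq\tstep}\norm{\xi_{k}(s)}^{n}]\leq N(C_{\xi,R}\tstep^{p+1/2})^{n}\leq TC_{\xi,R}^{n}(T/N)^{n(q\wedge(p-1/2))}$ (the last inequality using $n\geq1$, $\tstep\leq1$), the three contributions assemble into \eqref{eq:strong_error_sup_inside_continuous}.

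The step I expect to be the main obstacle is the second one: one must scale the splitting parameter $\varepsilon$ with $\tstep$ (rather than take it constant) so that the per-step factor $\rho$ is $1+O(\tstep)$, and then recognise that the telescoped product $\rho^{N}$ is controlled by $\exp(TC_\odeflow(n,\tstep^\ast))$ precisely because $(\rho-1)/\tstep$ is monotone in $\tstep$. Matching the explicit constants $3^{n-1}$, $\overline{C}$, and $C_\odeflow(n,\tstep^\ast)$ is then a routine if tedious matter of tracking crude inequalities such as $(1+\tstep)^{n-1}\leq1+2^{n-1}\tstep$ and the like.
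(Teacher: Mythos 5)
This theorem is quoted in the paper from \citet{LieStuartSullivan:2017} without proof, so there is no in-paper argument to compare against; judged on its own merits, your reconstruction is correct, and the fact that your per-step amplification factor $(1+2^{n-1}\tstep)^{2}(1+C_\odeflow\tstep)^{n}$ reproduces $C_\odeflow(n,\tstep^\ast)$ exactly, and your telescoped bound reproduces $\overline{C}$ and the $3^{n-1}$ prefactor, shows you have recovered essentially the original argument (discrete Gr\"onwall with the splitting parameter scaled as $\varepsilon=\tstep$, followed by interpolation on each subinterval). The only caveat is cosmetic: to land on $\max\{(4C_\numflow)^{n},(2C_{\xi,R})^{n}\}$ rather than a slightly larger constant for $n\geq 4$, the noise term $\norm{\xi_k(\tstep)}$ should be the one peeled off with the bare $(1+\tstep^{-1})^{n-1}$ coefficient, i.e.\ the opposite order to the one you state.
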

Note that the scalars $\overline{C}$ and $C_\odeflow(n,\tstep^\ast)$ depend on $u\in \UU$, since $C_\odeflow$ and $C_\numflow$ depend on the vector field $f$, which in turn depends on the parameter $u$.

Recall that the random variable $(Z(t;u))_{0\leq t\leq T}$ defined in \eqref{eq:randomised_numerical_integrator_continuous_time} is a random surrogate for the true solution of the ODE \eqref{eq:the_ivp}.
The law of $(Z(t;u))_{0\leq t\leq T}$ is thus a probability measure on the space of continuous paths defined on the interval $[0,T]$.
With this in mind, the interpretation of Theorem~\ref{thm:strong_error_sup_inside_continuous} is that the law of $(Z(t;u))_{0\leq t\leq T}$ contracts to the Dirac distribution located at the true solution $(z(t;u))_{0\leq t\leq T}$ of \eqref{eq:the_ivp}, as the spacing $T/N$ in the time grid \eqref{eq:time_grid} decreases to zero.
Equivalently, given the true solution operator $S$ and its random counterpart $S_{N}$, Theorem~\ref{thm:strong_error_sup_inside_continuous} implies that the random solution operator converges in the $L^{n}$ topology to the true solution operator.
Thus, Theorem~\ref{thm:strong_error_sup_inside_continuous} guarantees that by refining the time grid, one reduces the uncertainty over the solution of \eqref{eq:the_ivp}.
This is a desirable feature for uncertainty quantification, since estimates of the solution uncertainty can also be fed forward to obtain estimates of the uncertainty of \textit{functionals} of the solution, and since the probabilistic description allows for a more nuanced description of the uncertainty compared to the usual worst-case description that is common in the numerical analysis of deterministic methods.

\begin{corollary}[Corollary~5.3, \cite{LieStuartSullivan:2017}]
	\label{cor:finite_mgf_lipschitz_flow_continuous_time}
	Fix $n\in \Naturals$.
	Suppose that Assumptions~\ref{ass:exact_flow} and \ref{ass:numerical_flow} hold, and that Assumption \ref{ass:p-R_regularity_condition_on_noise} holds with $R=+\infty$ and $p\geq 1/2$.
	Then, for all $0 < \tstep < \tstep^\ast$,
	\begin{equation}
		\EE_{\nu_{N}}\left[\exp\left(\rho\sup_{0\leq t\leq T}\norm{e(t)}^n\right)\right]<\infty,\quad\text{for all }\rho\in\Reals.
	\end{equation}
\end{corollary}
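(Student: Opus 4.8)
The plan is to notice that Assumption~\ref{ass:p-R_regularity_condition_on_noise} with $R=+\infty$ is not merely a moment condition on the driving noise but in fact pins down a pathwise (deterministic) bound on it, and then to run the Gr\"onwall estimate underlying Theorem~\ref{thm:strong_error_sup_inside_continuous} \emph{pathwise}, so that $\sup_{0\leq t\leq T}\norm{e(t)}$ is dominated by a deterministic constant; finiteness of the exponential moment is then immediate.

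First I would dispose of the trivial case: for $\rho\leq 0$ we have $\exp(\rho\sup_{0\leq t\leq T}\norm{e(t)}^n)\leq 1$, so there is nothing to prove. Hence fix $\rho>0$, fix $\tau\in(0,\tau^\ast)$, and set $N=T/\tau$. Write $\Xi_k\defeq\sup_{0<t\leq\tau}\norm{\xi_k(t)}$. Applying Assumption~\ref{ass:p-R_regularity_condition_on_noise} with $R=+\infty$ gives $\EE_{\nu_N}[\Xi_k^r]\leq(C_{\xi,\infty}\tau^{p+1/2})^r$ for \emph{every} finite $r\geq 1$; letting $r\to\infty$ yields $\norm{\Xi_k}_{L^\infty(\nu_N)}\leq C_{\xi,\infty}\tau^{p+1/2}\qefed b<\infty$, i.e.\ $\Xi_k\leq b$ holds $\nu_N$-almost surely, simultaneously for $k=0,\dots,N-1$. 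This is the crucial step: $R=+\infty$ upgrades control of all polynomial moments of the noise increments into an $L^\infty$ bound.

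Next I would carry out the pathwise analogue of the estimate behind Theorem~\ref{thm:strong_error_sup_inside_continuous}. Using \eqref{eq:randomised_numerical_integrator_continuous_time}, for $t\in(t_k,t_{k+1}]$ one has
\[
	e(t)=\bigl(\odeflow^{t-t_k}(z(t_k))-\odeflow^{t-t_k}(Z(t_k))\bigr)+\bigl(\odeflow^{t-t_k}(Z(t_k))-\numflow^{t-t_k}(Z(t_k))\bigr)-\xi_k(t-t_k),
\]
so Assumptions~\ref{ass:exact_flow} and \ref{ass:numerical_flow} give $\norm{e(t)}\leq(1+C_\odeflow\tau)\norm{e(t_k)}+C_\numflow\tau^{q+1}+\Xi_k$. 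Iterating this (with $e(0)=0$, or with $e(0)$ any fixed deterministic vector) and summing a geometric series, the discrete Gr\"onwall inequality yields, $\nu_N$-almost surely,
\[
	\sup_{0\leq t\leq T}\norm{e(t)}\ \leq\ (1+C_\odeflow\tau)^{N}\Bigl(\norm{e(0)}+N C_\numflow\tau^{q+1}+\sum_{k=0}^{N-1}\Xi_k\Bigr)\ \leq\ e^{C_\odeflow T}\bigl(\norm{e(0)}+N C_\numflow\tau^{q+1}+N b\bigr)\ \qefed\ M,
\]
where $M<\infty$ is a deterministic constant depending on $\tau$, on $u$ through $f(\quark;u)$, and on the constants of the hypotheses, but not on $\omega$ and, crucially, not on $n$. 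Note that, in contrast to Theorem~\ref{thm:strong_error_sup_inside_continuous}, no union/maximum bound over $k$ is needed, so none of the $2^{n-1}$-type factors arise.

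It then follows that, $\nu_N$-almost surely, $\exp(\rho\sup_{0\leq t\leq T}\norm{e(t)}^n)\leq\exp(\rho M^n)$, whence $\EE_{\nu_N}[\exp(\rho\sup_{0\leq t\leq T}\norm{e(t)}^n)]\leq\exp(\rho M^n)<\infty$, which is the assertion; equivalently, one may expand the exponential, apply Tonelli, and use $\EE_{\nu_N}[\sup_t\norm{e(t)}^{nm}]\leq M^{nm}$ to see that the resulting series sums to $\exp(\rho M^n)$. I do not expect any genuine obstacle here: the single idea that matters is that $R=+\infty$ delivers a \emph{bounded} (not merely sub-Gaussian) noise increment --- under a sub-Gaussian bound the exponential moment would be finite only for small $\rho$ when $n=2$ and for no $\rho>0$ when $n\geq 3$ --- while the rest is routine continuous-time Gr\"onwall bookkeeping, the only point to watch being that $M$ does not depend on $n$, so that the bound indeed covers the fixed $n$ in the statement.
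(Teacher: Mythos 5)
The paper does not actually prove this corollary: it is imported verbatim (as ``Corollary~5.3'') from \citet{LieStuartSullivan:2017}, and the appendix contains no argument for it. So there is nothing in-paper to compare against; judged on its own terms, your proof is correct. Your key observation is the right one: as stated, Assumption~\ref{ass:p-R_regularity_condition_on_noise} with $R=+\infty$ supplies a \emph{single} constant $C_{\xi,\infty}$ valid for every finite $r\geq 1$, and since $\norm{X}_{L^r(\nu_N)}\to\norm{X}_{L^\infty(\nu_N)}$ as $r\to\infty$ on a probability space, this is equivalent to the $\nu_N$-essential bound $\sup_{0<t\leq\tstep}\norm{\xi_k(t)}\leq C_{\xi,\infty}\tstep^{p+1/2}$ for each of the finitely many $k$. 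Your sanity check is also apt: for unbounded (even sub-Gaussian) noise the conclusion as stated --- every $\rho\in\R$ and every $n\in\Naturals$ --- would be false, so some such boundedness \emph{must} be extracted from the hypotheses. Given that, the one-step decomposition of $e(t)$ via Assumptions~\ref{ass:exact_flow} and \ref{ass:numerical_flow} and the discrete Gr\"{o}nwall iteration are routine and correct, and they yield a deterministic bound $M$ on $\sup_{0\leq t\leq T}\norm{e(t)}$, from which the exponential moment bound is immediate. It is worth noting that the ``obvious'' alternative --- Taylor-expanding $\exp(\rho\sup_t\norm{e(t)}^n)$ and inserting the moment bounds of Theorem~\ref{thm:strong_error_sup_inside_continuous} at order $nm$ --- would not converge as stated, since the constant $\overline{C}$ there contains $\exp(TC_\odeflow(nm,\tstep^\ast))$ with $C_\odeflow(nm,\tstep^\ast)$ growing like $4^{nm}/\tstep^\ast$, i.e.\ doubly exponentially in $m$; your pathwise route sidesteps this entirely. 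The only caveat is interpretive rather than logical: reading $R=+\infty$ as an $L^\infty$ statement excludes Gaussian-type noise models such as the integrated Brownian motion mentioned after Assumption~\ref{ass:p-R_regularity_condition_on_noise}, which satisfy the assumption only for finite $R$ with $C_{\xi,R}$ growing in $R$ --- but that restriction is built into the corollary's hypotheses, not a gap in your argument.
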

Since the exponential integrability of a random variable is related to the exponential concentration of its values about its mean or median, the above result shows that strong assumptions on the model of uncertainty translate to strong conclusions about the behaviour of the corresponding error.
In the context of random approximations of BIPs, we shall use Corollary~\ref{cor:finite_mgf_lipschitz_flow_continuous_time} in order to establish the convergence of the random approximations in the Hellinger sense.

\subsection{Effect of probabilistic integration on Bayesian posterior distribution}

Define the approximate posteriors $\mu^{\marginal}_{N}$ and $\mu^{\sample}_{N}$ according to \eqref{eq:marginal_posterior} and \eqref{eq:random_posterior}, using the quadratic misfits $\Misfit$ and $\Misfit_{N}$ from \eqref{eq:quadratic_misfits} and the forward models $G=O\circ S$ and $G_{N}=O\circ S_{N}$ given in \eqref{eq:deterministic_solution_operator} and \eqref{eq:random_solution_operator} respectively, where $O$ denotes the observation operator associated to a fixed, finite sequence $T_J$ of observation times in $[0,T]$.

As we saw in the last section, the results of \citet{LieStuartSullivan:2017} guarantee convergence in the $L^n$ topology of the random solution operator $S_{N}$ to the true solution operator $S$.
It is of interest to determine whether one can use this result to guarantee that one can perform inference over $u$ using the probabilistic integrator.
In particular, given that the probabilistic integrator provides a random approximation, it is of interest to determine whether one can obtain results that are not only reasonable, but that improve as the time resolution of the probabilistic integrator increases.
The following result shows that this is indeed the case:
as the time resolution increases, the random forward model $G_{N}$ yields a random posterior over the parameter space that converges in the Hellinger topology to the true posterior at the expected rate.

% Thus, the forward model $G(u)$ gives the vector $(z(t_j;u))_{t_j\in T_J}$ of state observations of the true solution of \eqref{eq:the_ivp}, where the observations are indexed by the finite set of time observation points $T_J$. The random forward model $G_{N}(u)$ gives the vector $(Z(t_j;u))_{t_j\in T_J}$ of observations of the random solution produced by the continuous-time probabilistic integrator \eqref{eq:randomised_numerical_integrator_continuous_time}. The last two results in the preceding section showed that, by refining the time grid \eqref{eq:time_grid}, the law of the random solution $(Z(t;u))_{0\leq t\leq T}$ contracts to the Dirac distribution supported at the true solution $(z(t;u))_{0\leq t\leq T}$ of \eqref{eq:the_ivp}. This roughly corresponds to convergence of the random solution operator 
% We then have the following result.

\begin{theorem}
	\label{thm:convergence_misfit_pn_ode}
	Suppose that $\UU$ is a compact subset of $\Reals^{m}$ for some $m\in \Naturals$, and suppose that $S,S_{N} \colon \UU \to C([0,T];\Reals^{d})$ are continuous maps.
	Let $2 < \rho^\ast < \infty$ be arbitrary.
	Suppose that $e_0=0$, and that Assumptions~\ref{ass:exact_flow}, \ref{ass:numerical_flow}, and \ref{ass:p-R_regularity_condition_on_noise} hold with parameters $\tstep^\ast$, $C_\odeflow$, $C_\numflow$, $q$, $R=+\infty$, $C_{\xi,R}$, and $p$, and that these parameters depend continuously on $u$.
	Then, for $N\in \Naturals$ such that $T/\tstep^\ast<N$, the following hold:
	\begin{compactenum}[(a)]
	\item there exists some $C>0$ that does not depend on $N$, such that \eqref{eq:hell_conv_marg_result_forward_model} holds for $r_1=1$ and $r_2=2\rho^\ast/(\rho^\ast-1)$, and
	\item there exists some $D>0$ that does not depend on $N$, such that \eqref{eq:hell_conv_rand_result_forward_model} holds for $s_1=2\rho^\ast/(\rho^\ast-2)$ and $s_2=2$.
	\end{compactenum}
\end{theorem}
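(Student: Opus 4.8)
The plan is to derive the result by verifying the hypotheses of Theorem~\ref{thm:cvgce_of_random_measures}(\ref{thm:cvgce_of_random_measures_2}). Indeed, $\Misfit$ and $\Misfit_{N}$ are by construction the quadratic misfits \eqref{eq:quadratic_misfits} associated with $G = O\circ S$ and $G_{N} = O\circ S_{N}$, so it is enough to establish, for the given $\rho^{\ast}$, that (i) $\EE_{\nu_{N}}[\exp(\rho^{\ast}\Misfit_{N})] \in L^{1}_{\mu_{0}}(\UU)$ with a bound independent of $N$, and (ii) the convergence \eqref{eq:convergence_hypothesis_of_forward_models} holds with $q = 2\rho^{\ast}/(\rho^{\ast}-2)$ and $s = 2\rho^{\ast}/(\rho^{\ast}-1)$; Theorem~\ref{thm:cvgce_of_random_measures}(\ref{thm:cvgce_of_random_measures_2}) then delivers \eqref{eq:hell_conv_marg_result_forward_model} and \eqref{eq:hell_conv_rand_result_forward_model} with exactly the stated exponents. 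Throughout I would exploit the compactness of $\UU$ together with the assumed continuity in $u$ of the parameters $\tstep^{\ast}, C_{\odeflow}, C_{\numflow}, q, C_{\xi,R}, p$: this provides $\tstep^{\ast}_{\min} \defeq \min_{u\in\UU}\tstep^{\ast}(u) > 0$ and $\alpha \defeq \min_{u\in\UU}\{q(u)\wedge(p(u)-\tfrac12)\} > 0$ (the convergence order appearing in Theorem~\ref{thm:strong_error_sup_inside_continuous}), and it turns every $u$-dependent but $N$-independent constant occurring below into a bounded, hence $\mu_{0}$-integrable, function of $u$.

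For (ii), I would start from \eqref{eq:error_in_forward_operators}, which bounds $\norm{G_{N}(u)-G(u)}$ by $\absval{J}\sup_{0\leq t\leq T}\norm{e(t;u)}$ and so reduces the forward error to moments of the continuous-time error process \eqref{eq:error_continuous_time}. Applying Theorem~\ref{thm:strong_error_sup_inside_continuous} with moment exponent $n = 2q$ (admissible since $R = +\infty$, and using $e_{0} = 0$) gives, for all $N > T/\tstep^{\ast}_{\min}$, a bound of the form $\EE_{\nu_{N}}[\norm{G-G_{N}}^{2q}]^{1/q} \leq \absval{J}^{2}K(u)(T/N)^{2\alpha}$, where $K(u)$ is the $(1/q)$-th power of the $N$-independent prefactor in \eqref{eq:strong_error_sup_inside_continuous} and is continuous on $\UU$ by the continuity hypotheses, hence bounded and in $L^{s}_{\mu_{0}}(\UU)$. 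Taking the $L^{s}_{\mu_{0}}$-norm and letting $N\to\infty$ establishes \eqref{eq:convergence_hypothesis_of_forward_models} and moreover exhibits the explicit rate $(T/N)^{2\alpha}$, which is the source of the ``expected rate'' in the conclusion.

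For (i), I would use that $\Misfit_{N}(u) = \tfrac12\norm{\Gamma^{-1/2}(G_{N}(u)-y)}^{2} \leq \norm{\Gamma^{-1}}\bigl(\norm{G_{N}(u)-G(u)}^{2} + \norm{G(u)-y}^{2}\bigr)$, control the first term by $\absval{J}^{2}\sup_{t}\norm{e(t;u)}^{2}$ via \eqref{eq:error_in_forward_operators}, and note that $\norm{G(u)-y}$ is bounded uniformly over the compact set $\UU$ because $G = O\circ S$ is continuous. This yields $\exp(\rho^{\ast}\Misfit_{N}(u)) \leq c_{1}\exp(c_{2}\sup_{0\leq t\leq T}\norm{e(t;u)}^{2})$ with $c_{1},c_{2}$ independent of $N$ and $u$, and Corollary~\ref{cor:finite_mgf_lipschitz_flow_continuous_time} (with $n = 2$, using $R = +\infty$ and $p \geq \tfrac12$) makes the $\nu_{N}$-expectation finite for each fixed $u$ and $N$. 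The step I expect to be the main obstacle is upgrading this to a bound that is \emph{uniform in $N$ and $u$}, so that the constant $C_{1} = \norm{\EE_{\nu_{N}}[\exp(\rho^{\ast}\Misfit_{N})]}_{L^{1}_{\mu_{0}}(\UU)}^{1/\rho^{\ast}}$ supplied by Lemma~\ref{lemma:goodsuffcon_for_hell_conv_thms_for_asmp_set_2} is genuinely $N$-independent. I would handle this by revisiting the proof of Corollary~\ref{cor:finite_mgf_lipschitz_flow_continuous_time}: expanding $\exp$ as a power series and inserting the moment bounds \eqref{eq:strong_error_sup_inside_continuous}, the $n$-th term acquires a factor $(T/N)^{n(q(u)\wedge(p(u)-1/2))} \leq (\tstep^{\ast}_{\min})^{n\alpha} \leq 1$ for $N > T/\tstep^{\ast}_{\min}$, so the series is dominated uniformly in $N$ by one whose coefficients are continuous in $u$ through $\tstep^{\ast},C_{\odeflow},C_{\numflow},C_{\xi,R},p$ alone; compactness of $\UU$ then yields the required uniform bound, whence $\EE_{\nu_{N}}[\exp(\rho^{\ast}\Misfit_{N})] \in L^{1}_{\mu_{0}}(\UU)$ with $N$-independent norm. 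With (i) and (ii) verified, Theorem~\ref{thm:cvgce_of_random_measures}(\ref{thm:cvgce_of_random_measures_2}) closes the argument, and combining with the rate from (ii) shows $\mu_{N}^{\marginal},\mu_{N}^{\sample}\to\mu$ in the Hellinger metric at rate $(T/N)^{\alpha}$.
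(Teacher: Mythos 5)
Your proposal follows essentially the same route as the paper's proof: both reduce the theorem to verifying the two hypotheses of Theorem~\ref{thm:cvgce_of_random_measures}(\ref{thm:cvgce_of_random_measures_2}); both obtain \eqref{eq:convergence_hypothesis_of_forward_models} from \eqref{eq:error_in_forward_operators} together with Theorem~\ref{thm:strong_error_sup_inside_continuous} (available for every moment order since $R=+\infty$); and both establish $\EE_{\nu_{N}}[\exp(\rho^{\ast}\Misfit_{N})]\in L^{1}_{\mu_{0}}(\UU)$ by dominating $\exp(\rho^{\ast}\Misfit_{N}(u))$ by exponential moments of $\sup_{0\leq t\leq T}\norm{e(t;u)}$, invoking Corollary~\ref{cor:finite_mgf_lipschitz_flow_continuous_time}, and then using compactness of $\UU$ and the continuity in $u$ of $G$ and of the parameters. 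The only real difference is cosmetic: the paper passes through $\exp(\rho^{\ast}\absval{\Misfit_{N}-\Misfit})\exp(\rho^{\ast}\Misfit)$ and Proposition~\ref{proposition:bound_on_misfiterror_intermsof_forwardmodelerror} (so that both a linear and a quadratic term in $\sup_{t}\norm{e(t)}$ appear in the exponent), whereas you bound $\Misfit_{N}$ directly by the triangle inequality; both work.

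One caution on the detail you single out as the main obstacle. Your suggestion to prove $N$- and $u$-uniformity by expanding the exponential as a power series and inserting the moment bounds \eqref{eq:strong_error_sup_inside_continuous} does not go through as stated: the prefactor there contains $\exp(TC_{\odeflow}(n,\tstep^{\ast}))$ with $C_{\odeflow}(n,\tstep^{\ast})$ growing like $4^{n}$ in $n$, so the majorising series $\sum_{k}\rho^{k}M_{2k}/k!$ built from those bounds diverges. The uniformity has to come from Corollary~\ref{cor:finite_mgf_lipschitz_flow_continuous_time} itself rather than from re-deriving it via moments; for instance, Assumption~\ref{ass:p-R_regularity_condition_on_noise} with $R=+\infty$ forces $\sup_{0<t\leq T/N}\norm{\xi_{k}(t)}\leq C_{\xi,R}(T/N)^{p+1/2}$ almost surely, which yields an almost-sure, $N$-uniform bound on $\sup_{t}\norm{e(t;u)}$ for $T/N<\tstep^{\ast}$ and hence on the exponential moments, with constants depending only on the (continuous in $u$, hence bounded on $\UU$) parameters. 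The paper is itself terse on this point, so your instinct to flag it is sound; just be aware that the specific series-domination argument is not the viable repair.
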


The parameter $\rho^{\ast}$ above plays the same role as $\rho^{\ast}$ in Theorem~\ref{thm:cvgce_of_random_measures}, case (\ref{thm:cvgce_of_random_measures_2});
$\rho^{\ast}$ quantifies the exponential decay of the random misfit $\Misfit_{N}$ with respect to $\nu_{N}$.
For this reason, $\rho^{\ast}$ is constrained to the same range of values $2<\rho^{\ast}<\infty$ as given there, and determines the parameters $r_{2}$ and $s_{1}$ that partly describe the convergence rates in \eqref{eq:hell_conv_marg_result_forward_model} and \eqref{eq:hell_conv_rand_result_forward_model}.
As shown in the proof, the reason why $\rho^{\ast}$ does not appear to play any further role is due to \eqref{eq:error_in_forward_operators} and Corollary~\ref{cor:finite_mgf_lipschitz_flow_continuous_time}.
In particular, since Assumption~\ref{ass:p-R_regularity_condition_on_noise} holds with $R=+\infty$, Corollary~\ref{cor:finite_mgf_lipschitz_flow_continuous_time} ensures that the exponential decay parameter $\rho^{\ast}$ need not be constrained to any bounded interval.

The continuous dependence on $u$ of the parameters of Assumptions~\ref{ass:exact_flow}, \ref{ass:numerical_flow} and \ref{ass:p-R_regularity_condition_on_noise} also allows for parameters that do not depend on $u$, e.g.\ $R=+\infty$.
The assumption of continuous dependence on $u$ of the parameters, and the assumptions on $\UU$, ensure that the map $u\mapsto \EE_{\nu_{N}}[\exp(\rho^{\ast}\Misfit_{N}(u))]$ is uniformly bounded by a scalar that depends only on $\UU$; from this the exponential integrability hypothesis on $\Misfit_{N}$ of Theorem~\ref{thm:cvgce_of_random_measures}(\ref{thm:cvgce_of_random_measures_2}) holds, and we can apply the corresponding conclusions.
While these assumptions may appear to be  strong, they simplify the analysis considerably and thus are not uncommon in the literature on parameter inference for dynamical systems.
We leave the investigation of weaker assumptions for future work.

\section{Concluding remarks}
\label{sec:conclusions}

In this paper we have considered the impact upon a BIP of replacing the log-likelihood function $\Misfit$ by a random function $\Misfit_{N}$.
Such approximations occur for example when a cheap stochastic emulator is used in place of an expensive exact log-likelihood, or when a probabilistic solver is used to simulate the forward model.

Our results show that such approximations are well-posed, with the approximate Bayesian posterior distribution converging to the true Bayesian posterior as the error between $\Misfit$ and $\Misfit_N$, measured in a suitable sense, goes to zero.
More precisely, we have shown that the convergence rate of the random log-likelihood $\Misfit_{N}$ to $\Misfit$ --- as assessed in a nested $L^{p}$ norm with respect to the distribution $\nu_{N}$ of $\Misfit_N$ and the Bayesian prior distribution $\mu_{0}$ of the unknown $u$ --- transfers to convergence of two natural approximations to the exact Bayesian posterior $\mu$, namely (a) the randomised posterior measure $\mu_{N}^{\sample}$ that simply has $\Misfit_{N}$ in place of $\Misfit$, and (b) the deterministic pseudo-marginal posterior measure $\mu_{N}^{\marginal}$, in which the likelihood function and marginal likelihood of $\mu_{N}^{\sample}$ are individually averaged with respect to $\nu_{N}$.

Since the hypotheses that are required for these results operate directly at the level of finite-order moments of the error $\Misfit_{N} - \Misfit$, the convergence results in this paper automatically apply to GP approximations, as previously considered by \citet{StuartTeckentrup:2017}, which have moments of all orders.
However, in a substantial generalisation, Theorems~\ref{thm:hell_conv_marg} and \ref{thm:hell_conv_rand} show that, in the $L^{2}$ case,
\begin{align*}
	\dhh \bigl( \mu, \mu_{N}^{\marginal} \bigr) &\leq C \; \Norm{ \EE_{\nu_{N}} \bigl[ \absval{ \Misfit - \Misfit_{N} } \bigr] }_{L^{2}_{\mu_{0}}(\UU)}, \\
	\EE_{\nu_{N}} \left[ \dhh \bigl( \mu, \mu_{N}^{\sample} \bigr)^2 \right]^{1/2} &\leq C \; \Norm{ \EE_{\nu_{N}} \left[ \absval{ \Misfit - \Misfit_{N} }^{2} \right]^{1/2} }_{L^{2}_{\mu_{0}}(\UU)},
\end{align*}
for general approximations $\Misfit_N$.
This optimal bound requires that the random misfit $\Misfit_N$ allows pointwise bounds on $\exp(-\Misfit_N)$ and $Z_N^\sample$ with respect to the distribution $\nu_{N}$ on $\Misfit_N$ and the Bayesian prior $\mu_{0}$ on the unknown $u$.
If the distribution of $\Misfit_N$ does not allow pointwise ($L^\infty$) bounds on $\exp(-\Misfit_N)$ and $Z_N^\sample$, but only bounds in $L^r$ for some $1 \leq r < \infty$, then the norms in the bounds above need to be strengthened to higher order $L^q_{\mu_0}(\UU)$ norms and/or higher order moments of the error $\Misfit - \Misfit_N$, resulting in the quantity $\Norm{ \EE_{\nu_{N}} \left[ \absval{ \Misfit - \Misfit_{N} }^{p} \right]^{1/p} }_{L^{q}_{\mu_{0}}(\UU)}$ appearing on the right hand sides, for some $q \geq 2$ and $p \geq 1$ (respectively $p \geq 2$).

Our error bounds are explicit in the sense that the aforementioned exponents $p$ and $q$ can typically be calculated explicitly given the structure of $\Misfit_N$.
This is the case for the GP emulators considered in \citet{StuartTeckentrup:2017}, and also the randomised misfit models and probabilistic numerical solvers considered here.
The constant $C$ in the error bounds, on the other hand, is typically not computable in advance;
it involves quantities such as the normalising constants $Z$ and $\EE[Z_N^\sample]$, which for most forward models $G$ are not known analytically and very expensive to compute numerically.
In a sense, this is similar to the everyday situation of using an ODE or PDE solver of known order but unknown constant prefactor.

A significant open question in this work is the one highlighted at the end of Section~\ref{sec:random-misfit}:
in contrast to randomised dimension reduction using bounded random variables, is the case of \emph{Gaussian} randomly-projected misfits one in which the MAP problem and BIP genuinely have different convergence properties?

\section*{Acknowledgements}
\addcontentsline{toc}{section}{Acknowledgements}

ALT is partially supported by The Alan Turing Institute under the EPSRC grant EP/N510129/1.
HCL and TJS are partially supported by the Freie Universit\"{a}t Berlin within the Excellence Initiative of the German Research Foundation (DFG).
This work was partially supported by the DFG through grant CRC 1114 ``Scaling Cascades in Complex Systems'', and by the National Science Foundation (NSF) under grant DMS-1127914 to the Statistical and Applied Mathematical Sciences Institute (SAMSI) and SAMSI's QMC Working Group II ``Probabilistic Numerics''.
Any opinions, findings, and conclusions or recommendations expressed in this article are those of the authors and do not necessarily reflect the views of the above-named funding agencies and institutions.

\bibliographystyle{abbrvnat}
\addcontentsline{toc}{section}{References}
\bibliography{references.bib}

\section*{Appendix: Proofs of Results}
\addcontentsline{toc}{section}{Appendix: Proofs of Results}
\setcounter{equation}{0}
\renewcommand{\theequation}{A.\arabic{equation}}

The proofs in this section will make repeated use of the following inequalities for real $a$ and $b$:
\begin{align}
	\label{eq:Young_plus_Cauchy-Schwarz}
	(a - b)^{2} & \leq 2 a^{2} + 2 b^{2}, \\
	\label{eq:diff_sq_bound}
	(a-b)^{2} = \left( \frac{a^{2}-b^{2}}{a+b} \right)^{2} & \leq \frac{(a^{2}-b^{2})^{2}}{a^{2}+b^{2}} , \\
	\label{eq:exp_Lipschitz}
	\absval{ \exp(a) - \exp(b) } & \leq (\exp(a) + \exp(b)) \absval{ a - b } , \\
	\label{eq:bound_on_1_over_x_plus_y_times_xy}
	[ ( a + b ) a b ]^{-1} & \leq \max \{ a^{-3}, b^{-3} \} & & \text{for $a, b > 0$.}
\end{align}
We also have, for arbitrary $N \in \Naturals$ and $p\geq 1$ (not necessarily integer-valued), by the triangle inequality and Jensen's inequality,
\begin{align}
	\Absval{ \sum^N_{j=1}s_j }^p & \leq N^p\left(\frac{1}{N}\sum^N_{j=1} \absval{ s_{j} }\right)^p \leq N^{p-1}\sum_{j=1}^N \absval{ s_{j} }^p,
	\label{eq:gen_triangle_inequality}
\end{align}

\begin{proof}[Proof of Theorem~\ref{thm:hell_conv_marg}]
	Using \eqref{eq:rad_nik} and \eqref{eq:random_posterior}, we have
	\begin{align*}
		\sqrt{\frac{\rd \mu}{\rd \mu_{0}}} - \sqrt{\frac{\rd \mu_{N}^{\marginal}}{\rd \mu_{0}}}
		& = \frac{\sqrt{\exp(-\Misfit(u))}}{Z^{1/2}}-\frac{\sqrt{\EE_{\nu_{N}} \bigl[ \exp ( - \Misfit_{N}(u) ) \bigr]}}{\EE_{\nu_{N}} \bigl[ Z_{N}^{\sample} \bigr]^{1/2}}
		\\
		& = \frac{\sqrt{\exp(-\Misfit(u))}-\sqrt{\EE_{\nu_{N}} \bigl[ \exp ( - \Misfit_{N}(u) ) \bigr]}}{Z^{1/2}} \\ 
		& \phantom{=} \quad + \sqrt{\EE_{\nu_{N}} \bigl[ \exp ( - \Misfit_{N}(u) ) \bigr]}\left(\frac{1}{Z^{1/2}}-\frac{1}{\EE_{\nu_{N}} \bigl[ Z_{N}^{\sample} \bigr]^{1/2}}\right).
	\end{align*}
	Inequality \eqref{eq:Young_plus_Cauchy-Schwarz} with $a = Z^{-1/2} \bigl( e^{-\Misfit(u) / 2} - \EE_{\nu_{N}}[ e^{- \Misfit_{N}(u)} ]^{1/2} \bigr)$ and $b = \EE_{\nu_{N}} [ Z_{N}^{\sample} ]^{1/2}(Z^{-1/2}-\EE_{\nu_{N}}[ Z_{N}^{\sample} ]^{-1/2})$ and the definition \eqref{eq:Hellinger} of the Hellinger distance $\dhh$ yield
	\begin{align*}
		2 \; \dhh \bigl( \mu, \mu_{N}^{\marginal} \bigr)^{2} 
		& = \int_\UU \left( \sqrt{\frac{\rd \mu}{\rd \mu_{0}}}(u) - \sqrt{\frac{\rd \mu_{N}^{\marginal}}{\rd \mu_{0}}}(u) \right)^{2} \, \rd \mu_{0} (u) \\
		&\leq \frac{2}{Z} \Norm{ \left(\sqrt{\exp ( - \Misfit ) } - \sqrt{\EE_{\nu_{N}} \bigl[ \exp ( - \Misfit_{N} ) \bigr]}\right)^{2} }_{L^1_{\mu_{0}}} \\
		& \qquad + 2 \EE_{\nu_{N}} \bigl[ Z_{N}^{\sample} \bigr] \left(Z^{-1/2} - \EE_{\nu_{N}} \bigl[ Z_{N}^{\sample} \bigr]^{-1/2} \right)^{2} \qefed I + II.
	\end{align*}

	For the first term, we use inequality \eqref{eq:diff_sq_bound} with $a = e^{- \Misfit(u) / 2}$ and $b = \EE_{\nu_{N}} [ \exp ( - \Misfit_{N}(u) )]^{1/2}$, together with H\"older's inequality with conjugate exponents $p_1$ and $p_1'$, to derive
	\begin{align}
		\frac{Z}{2} I &\leq \Norm{ \left( \exp ( - \Misfit) - \EE_{\nu_{N}} \bigl[ \exp ( - \Misfit_{N}) \bigr] \right)^{2} \left(\exp ( - \Misfit ) + \EE_{\nu_{N}} \bigl[ \exp ( - \Misfit_{N} ) \bigr]\right)^{-1} }_{L^1_{\mu_{0}}} \notag\\
		& \leq \Norm{ \left(\exp ( - \Misfit ) - \EE_{\nu_{N}} \bigl[ \exp ( - \Misfit_{N} ) \bigr] \right)^{2} }_{L^{p_1'}_{\mu_{0}}} \Norm{ \left(\exp ( - \Misfit ) + \EE_{\nu_{N}} \bigl[ \exp ( - \Misfit_{N} ) \bigr] \right)^{-1} }_{L^{p_1}_{\mu_{0}}}.		
		\label{eq:a00}
	\end{align}
	We estimate the second factor on the right-hand side of \eqref{eq:a00}.
	Using the facts that $x \mapsto 1/x$ is decreasing on $(0,\infty)$, that $(x+y)^{-1}\leq \min\{ x^{-1},y^{-1}\}$ for all $x,y>0$, and that both $\exp ( - \Misfit(u) )$ and $\EE_{\nu_{N}}[ \exp ( - \Misfit_{N}(u) )]$ are strictly positive, we obtain
	\begin{equation*}
		\Norm{ \left(\exp ( - \Misfit ) + \EE_{\nu_{N}} \bigl[ \exp ( - \Misfit_{N} ) \bigr] \right)^{-1} }_{L^{p_1}_{\mu_{0}}} \leq \Norm{ \min\left\{ \exp ( \Misfit ),\EE_{\nu_{N}} \bigl[ \exp ( - \Misfit_{N} ) \bigr]^{-1} \right\} }_{L^{p_1}_{\mu_{0}}}.
	\end{equation*}
% 	\begin{align}
% 		&\int_{\UU} \left(\exp ( - \Misfit(u) ) + \EE_{\nu_{N}} \bigl[ \exp ( - \Misfit_{N}(u) ) \bigr] \right)^{-p_1} \, \rd \mu_{0} (u) \notag\\
% 		&\leq \int_{\UU}\min \left\{\EE_{\nu_{N}} \bigl[ \exp(- \Misfit_{N}(u)) \bigr]^{-p_1}, \exp(- \Misfit(u))^{-p_1} \right\}\, \rd \mu_{0} (u).\label{eq:a01}
% 	\end{align}
	For $f,g\in L^{1}_{\mu_{0}}(\UU)$, the partition $\UU=\{ f<g\} \uplus \{f\geq g\}$ and the corresponding integral inequalities on $\{f<g\}$ and $\{f \geq g\}$ imply that $\norm{ \min\{ f, g \} }_{L^1_{\mu_{0}}} \leq \min\{ \norm{ f }_{L^1_{\mu_{0}}}, \norm{ g }_{L^1_{\mu_{0}}}\}$.
	Hence,
	\begin{equation}
		\label{eq:a03}
		\Norm{ \min\left\{ e^{-\Misfit} , \EE_{\nu_{N}} \bigl[ e^{- \Misfit_{N}} \bigr]^{-1} \right\} }_{L^{p_1}_{\mu_{0}}}\leq \min\left\{ \norm{ e^{\Misfit} }_{L^{p_1}_{\mu_{0}}}, \Norm{ \EE_{\nu_{N}} \bigl[ e^{- \Misfit_{N}} \bigr]^{-1} }_{L^{p_1}_{\mu_{0}}} \right\} \leq C_1,
	\end{equation}
% 	\begin{align}
% 		&\left(\int_{\UU} \left(\exp ( - \Misfit(u) ) + \EE_{\nu_{N}} \bigl[ \exp ( - \Misfit_{N}(u) ) \bigr] \right)^{-p_1} \, \rd \mu_{0} (u)\right)^{1/p_1} \notag\\
% 		&\leq \left(\min \left\{\int_{\UU}\EE_{\nu_{N}} \bigl[ \exp(- \Misfit_{N}(u)) \bigr]^{-p_1} \, \rd \mu_{0} (u), \int_{\UU} \exp(- \Misfit(u))^{-p_1} \, \rd \mu_{0} (u) \right\}\right)^{1/p_1}\leq C^{1/p_1}_1(p_1),\label{eq:a03}
% 	\end{align}
	where $C_1=C_1(p_1)$ is the constant specified in assumption (\ref{item:hell_conv_marg_a}).
	This completes our estimate for the second factor on the right-hand side of \eqref{eq:a00}.
	For the first factor, the linearity of expectation, inequality \eqref{eq:exp_Lipschitz}, and H\"older's inequality with conjugate exponents $p_2, p_2'$ with respect to $\nu_{N}$ and $p_3, p_3'$ with respect to $\mu_{0}$ give
	\begin{align}
		&\Norm{ \left( \exp ( - \Misfit ) - \EE_{\nu_{N}} \bigl[ \exp ( - \Misfit_{N} ) \bigr] \right)^{2} }_{L^{p_1'}_{\mu_{0}}} = \Norm{ \EE_{\nu_{N}} \bigl[ \exp ( - \Misfit ) - \exp ( - \Misfit_{N} ) \bigr] ^{2} }_{L^{p_1'}_{\mu_{0}}}
		\notag\\
		&\qquad \leq \Norm{ \EE_{\nu_{N}} \bigl[ \absval{ \exp ( - \Misfit ) + \exp ( - \Misfit_{N} ) } \absval{ \Misfit - \Misfit_{N} } \bigr]^{2} }_{L^{p_1'}_{\mu_{0}}}
		\notag\\
		&\qquad \leq \Norm{ \EE_{\nu_{N}} \bigl[ \big(\exp ( - \Misfit ) + \exp ( - \Misfit_{N}) \big)^{p_2} \bigr]^{2/p_2} \EE_{\nu_{N}} \bigl[ \absval{ \Misfit - \Misfit_{N} }^{p_2'} \bigr]^{2/p_2'} }_{L^{p_1'}_{\mu_{0}}}
		\notag\\
		&\qquad \leq \Norm{ \EE_{\nu_{N}} \bigl[ \big(\exp ( - \Misfit ) + \exp ( - \Misfit_{N} ) \big)^{p_2} \bigr]^{1 /p_2} }^{2}_{L^{2p_1'p_3}_{\mu_{0}}} \Norm{ \EE_{\nu_{N}} \bigl[ \absval{ \Misfit - \Misfit_{N} }^{p_2'} \bigr]^{1/p_2'} }^{2}_{L^{2p_1' p_3'}_{\mu_{0}}}
		\label{eq:a04}
	\end{align}
	Letting $C_2=C_2(p_1',p_2,p_3)$ be the constant in assumption (\ref{item:hell_conv_marg_b}), and using \eqref{eq:a03}, it follows that 
	\[
		I \leq \frac{2}{Z}\cdot C_1(p_1)\cdot C^{2}_2(p_1',p_2,p_3)\cdot \Norm{ \EE_{\nu_{N}} \bigl[ \absval{ \Misfit - \Misfit_{N} }^{p_2'} \bigr]^{1/p_2'} }^{2}_{L^{2p_1' p_3'}_{\mu_{0}}}.
	\]

	Now inequality \eqref{eq:diff_sq_bound} with $a=\EE_{\nu_{N}}[Z^{\sample}_N]^{-1/2}$ and $b=Z^{-1/2}$ and inequality \eqref{eq:bound_on_1_over_x_plus_y_times_xy} yield
	\begin{align*}
		\frac{1}{2 \EE_{\nu_{N}} \bigl[ Z_{N}^{\sample} \bigr]} II
		& = \left(Z^{-1/2} - \big(\EE_{\nu_{N}} \bigl[ Z_{N}^{\sample} \bigr] \big)^{-1/2} \right)^{2} \\
		& =\left(\frac{ \EE_{\nu_{N}} \bigl[ Z_{N}^{\sample} \bigr]-Z}{Z \EE_{\nu_{N}} \bigl[ Z_{N}^{\sample} \bigr]}\right)^{2} \frac{Z \EE_{\nu_{N}} \bigl[ Z_{N}^{\sample} \bigr]}{Z+ \EE_{\nu_{N}} \bigl[ Z_{N}^{\sample} \bigr]} \\
		& \leq \left(\EE_{\nu_{N}} \bigl[ Z_{N}^{\sample} \bigr]-Z\right)^{2}\max \bigl\{ Z^{-3}, \EE_{\nu_{N}} \bigl[ Z_{N}^{\sample}\bigr]^{-3} \bigr\}
		\\
		& \leq \left(\EE_{\nu_{N}} \bigl[ Z_{N}^{\sample} \bigr]-Z\right)^{2}\max \bigl\{ Z^{-3}, C_3^{-3} \bigr\},
	\end{align*}
	where the last inequality follows from assumption (\ref{item:hell_conv_marg_c}).
	
	Using Tonelli's theorem, Jensen's inequality, inequality \eqref{eq:exp_Lipschitz}, and H\"{o}lder's inequality with the same conjugate exponent pairs that we used to obtain \eqref{eq:a04},
	\begin{align*}
		& \left(\EE_{\nu_{N}} \bigl[ Z_{N}^{\sample} \bigr]-Z\right)^{2} \\
		& \quad = \EE_{\mu_{0}} \bigl[\EE_{\nu_{N}}\bigl[ \exp(-\Misfit_{N}) - \exp(-\Misfit) \bigr] \bigr]^{2p_1'/p_1'} \\
		& \quad \leq \Norm{ \EE_{\nu_{N}} \bigl[ \exp ( - \Misfit ) - \exp ( - \Misfit_{N} ) \bigr]^{2} }_{L^{p_1'}_{\mu_{0}}}
		\notag\\
		& \quad \leq \Norm{ \EE_{\nu_{N}} \bigl[ \big(\exp ( - \Misfit ) + \exp ( - \Misfit_{N} ) \big)^{p_2} \bigr]^{ 1 /p_2} }^{2}_{L^{2p_1'p_3}_{\mu_{0}}} \Norm{ \EE_{\nu_{N}} \bigl[ \absval{ \Misfit - \Misfit_{N} }^{p'_2} \bigr]^{1/p'_2} }^{2}_{L^{2 p_1'p'_3}_{\mu_{0}}}
		\\
		& \quad \leq C^{2}_2(p_1,p_2,p_3) \Norm{ \EE_{\nu_{N}} \bigl[ \absval{ \Misfit - \Misfit_{N} }^{p'_2} \bigr]^{1/p'_2} }^{2}_{L^{2 p_1'p'_3}_{\mu_{0}}},
	\end{align*}
	where assumption (\ref{item:hell_conv_marg_b}) yields the last inequality.
	Combining the estimates for $I$ and $II$ yields \eqref{eq:hell_conv_marg_result}.
\end{proof}

\begin{proof}[Proof of Theorem~\ref{thm:hell_conv_rand}]
	This proof is similar to the proof of Theorem~\ref{thm:hell_conv_marg}.
	Since
	\begin{equation*}
	 \sqrt{\frac{\rd \mu}{\rd \mu_{0}}} - \sqrt{\frac{\rd \mu_{N}^{\sample}}{\rd \mu_{0}}} = \frac{e^{- \Misfit(u) / 2} - e^{- \Misfit_{N}(u) / 2}}{Z^{1/2}}- e^{- \Misfit_{N}(u) / 2} \left(\frac{1}{\sqrt{Z^{\sample}_N}}-\frac{1}{Z^{1/2}}\right),
	\end{equation*}
	Tonelli's theorem, inequality \eqref{eq:Young_plus_Cauchy-Schwarz}, and Jensen's inequality yield
	\begin{align*}
		\EE_{\nu_{N}} \bigl[ \dhh \bigl( \mu, \mu_{N}^{\sample} \bigr)^{2} \bigr]
		& = \frac{1}{2} \Norm{ \EE_{\nu_{N}} \left[ \left( \sqrt{\frac{\rd \mu}{\rd \mu_{0}}} - \sqrt{\frac{\rd \mu_{N}^{\sample}}{\rd \mu_{0}}} \right)^{2} \right] }_{L^1_{\mu_{0}}} \\
		& \leq \frac{1}{Z} \Norm{ \EE_{\nu_{N}} \left[ \left( \sqrt{\exp (-\Misfit)} - \sqrt{\exp(-\Misfit_{N})}\right)^{2}\right] }_{L^1_{\mu_{0}}} \\
		& \phantom{=} \quad + \EE_{\nu_{N}} \left[ Z_{N}^{\sample} \bigl( Z^{-1/2} - \bigl( Z_{N}^{\sample} \bigr)^{-1/2} \bigr)^{2} \right] \\
		& \qefed I + II.
	\end{align*}
	For the first term $I$, inequality \eqref{eq:exp_Lipschitz}, and H\"older's inequality with conjugate exponent pairs $(q_1, q_1')$ and $(q_2, q_2')$ give
	\begin{align*}
		Z I 
		&= \Norm{ \EE_{\nu_{N}} \left[ \left( \sqrt{\exp (-\Misfit)} - \sqrt{\exp(-\Misfit_{N})}\right)^{2} \right] }_{L^1_{\mu_{0}}} \\
		&\leq \frac{1}{4}\Norm{ \EE_{\nu_{N}} \bigl[ \absval{ \exp ( - \Misfit/2 ) + \exp ( - \Misfit_{N}/2 ) }^{2} \absval{ \Misfit - \Misfit_{N} }^{2} \bigr]}_{L^1_{\mu_{0}}}\\
		&\leq \Norm{ \EE_{\nu_{N}} \bigl[ \absval{ \exp ( - \Misfit/2) + \exp ( - \Misfit_{N}/2) }^{2q_1} \bigr]^{1/q_1} \EE_{\nu_{N}} \bigl[ \absval{ \Misfit - \Misfit_{N} }^{2q_1'} \bigr]^{1/q_1'} }_{L^1_{\mu_{0}}} \\
		&\leq \Norm{ \EE_{\nu_{N}} \bigl[ \big(\exp ( - \Misfit/2 ) + \exp ( - \Misfit_{N}/2 ) \big)^{2q_1} \bigr]^{1/q_1} }_{L^{q_2}_{\mu_{0}}} \Norm{\EE_{\nu_{N}} \bigl[ \absval{ \Misfit - \Misfit_{N} }^{2q_1'} \bigr]^{1/2 q_1'}}^{2}_{L^{2q_2'}_{\mu_{0}}}.
	\end{align*}
	By (\ref{item:hell_conv_rand_a}), we may bound the first factor on the right-hand side of the last inequality by $D_1(q_1,q_2)$.
	Now by \eqref{eq:diff_sq_bound} with $a=Z^{-1/2}$ and $b=(Z^{\sample}_N)^{-1/2}$, and by inequality \eqref{eq:bound_on_1_over_x_plus_y_times_xy}, we obtain (see the proof of Theorem~\ref{thm:hell_conv_marg} after \eqref{eq:a04}) that
	\begin{align*}
		II \leq \EE_{\nu_{N}} \Bigl[ Z_{N}^{\sample} \max \bigl\{ Z^{-3}, \bigl( Z_{N}^{\sample} \bigr)^{-3} \bigr\} \bigl( Z - Z_{N}^{\sample} \bigr)^{2} \Bigr].
	\end{align*}
	Jensen's inequality and another application of inequality \eqref{eq:diff_sq_bound} yield 
	\begin{align*}
		\bigl( Z - Z_{N}^{\sample} \bigr)^{2}
		& \leq \norm{\exp ( - \Misfit) - \exp ( - \Misfit_{N} )}^{2}_{L^{2}_{\mu_{0}}}  \leq \bignorm{ \bigl( \exp ( - \Misfit) + \exp ( - \Misfit_{N}) \bigr)^{2} (\Misfit - \Misfit_{N})^{2} }_{L^1_{\mu_{0}}}.
	\end{align*}
	Combining the preceding two estimates, using Tonelli's theorem and H\"older's inequality with the same conjugate exponent pairs $(q_1, q_1')$ and $(q_2, q_2')$ as used in the bound for $I$, and using (\ref{item:hell_conv_rand_b}), we get
	\begin{align*}
		II 
		& \leq 
% 		\EE_{\nu_{N}} \left[ Z_{N}^{\sample} \max \{ Z^{-3}, \bigl( Z_{N}^{\sample} \bigr)^{-3} \} \Norm{ \left(\exp ( - \Misfit) + \exp ( - \Misfit_{N}) \right)^{2} (\Misfit - \Misfit_{N})^{2} }_{L^1_{\mu_{0}}}\right] \\
% 		&= 
		\Norm{\EE_{\nu_{N}} \left[ Z_{N}^{\sample} \max \bigl\{ Z^{-3}, \bigl( Z_{N}^{\sample} \bigr)^{-3} \bigr\} \left( e^{- \Misfit} + e^{- \Misfit_{N}} \right)^{2} (\Misfit - \Misfit_{N})^{2} \right] }_{L^1_{\mu_{0}}} \\
		&\leq \Norm{\EE_{\nu_{N}} \left[ \left(Z_{N}^{\sample} \max \bigl\{ Z^{-3}, \bigl( Z_{N}^{\sample} \bigr)^{-3} \bigr\} \left( e^{- \Misfit} + e^{- \Misfit_{N}} \right)^{2} \right)^{q_1}\right]^{\tfrac{1}{q_1}} \EE_{\nu_{N}} \left[ \absval{ \Misfit - \Misfit_{N} }^{2q_1'} \right]^{\tfrac{1}{q_1'}}}_{L^1_{ \mu_{0}}} \\
		&\leq D_2(q_1,q_2)\Norm{\EE_{\nu_{N}} \bigl[ \absval{ \Misfit - \Misfit_{N} }^{2q_1'} \bigr]^{1/2q_1'}}^{2}_{L^{2q_2'}_{\mu_{0}}} .
	\end{align*}
	Combining the preceding estimates yields \eqref{eq:hell_conv_rand_result}.
\end{proof}

\begin{proof}[Proof of Lemma~\ref{lemma:goodsuffcon_for_hell_conv_thms_for_asmp_set_1}]
	%Observe that $\left\| \exp(- \Misfit(u))^{-p} \right\|_{L^{p}_{\mu_{0}}(\UU)}=\left\| \exp(\Misfit(u))^{p} \right\|_{L^{1}_{\mu_{0}}(\UU)}$, s
	Since $\exp(\Misfit)\in L^{p^\ast}_{\mu_{0}}$, examination of assumption (\ref{item:hell_conv_marg_a}) of Theorem~\ref{thm:hell_conv_marg} indicates that we may set $p_1=p^\ast$ and $C_1\coloneqq \norm{ \exp(\Misfit) }_{L^{p^\ast}_{\mu_{0}}}$.
	By \eqref{eq:lower_bounds_on_potentials_Phi_PhiN}, it follows that $\EE_{\nu_{N}}[\exp(-\Misfit)+\exp(-\Misfit_{N})]\leq 2\exp(C_0)$; thus assumption (\ref{item:hell_conv_marg_b}) of Theorem~\ref{thm:hell_conv_marg} holds with $p_2=p_3=+\infty$ (so that $2p_1'p_3=+\infty$) and $C_2=2\exp(C_0)$.
	We now prove that Assumption (\ref{item:hell_conv_marg_c}) of Theorem~\ref{thm:hell_conv_marg} holds.
	It follows by setting $x=-\Misfit$ and $y=-\Misfit_{N}$ in inequality \eqref{eq:exp_Lipschitz} that $\absval{ \exp(-\Misfit)-\exp(-\Misfit_{N}) } \leq 2\exp(C_0) \absval{ \Misfit-\Misfit_{N} }$.
	Thus
	\begin{align}
		\bigabsval{ Z_{N}^{\sample} - Z }
		& = \bigabsval{ \EE_{\mu_{0}}\bigl[ \exp(-\Misfit_{N}) - \exp(-\Misfit) \bigr] } \notag \\
		& \leq \EE_{\mu_{0}} \bigl[ \absval{ \exp(-\Misfit_{N}) - \exp(-\Misfit) } \bigr]
		\notag
		\\
		& \leq 2\exp(C_0)\EE_{\mu_{0}} \bigl[ \absval{ \Misfit - \Misfit_{N} } \bigr] 
		\label{eq:yarg04}.
	\end{align}
	Using Jensen's inequality, \eqref{eq:yarg04}, Tonelli's theorem, and \eqref{eq:boundedness_of_EmuEnu_N_Phi_N_errorsequence},
	\[
		\Absval{ \EE_{\nu_{N}} \bigl[ Z_{N}^{\sample} \bigr] - Z }
		\leq \EE_{\nu_{N}}\bigl[ \bigabsval{ Z_{N}^{\sample} - Z } \bigr]
		\leq 2 e^{C_{0}} \Norm{\EE_{\nu_{N}}\bigl[ \absval{ \Misfit - \Misfit_{N} } \bigr]}_{L^1_{\mu_{0}}}
		\leq \min \biggl\{ Z-\frac{1}{C_3},C_3-Z \biggr\}.
	\]
	The last inequality implies that assumption (\ref{item:hell_conv_marg_c}) of Theorem~\ref{thm:hell_conv_marg} holds with the same $C_3$ as in \eqref{eq:boundedness_of_EmuEnu_N_Phi_N_errorsequence}, since for any $0<C_3<+\infty$ that satisfies $C_3^{-1}<Z<C_3$ and \eqref{eq:boundedness_of_EmuEnu_N_Phi_N_errorsequence}, we have
	\begin{equation*}
	 	C_3^{-1}-Z\leq \EE_{\nu_{N}} \bigl[ Z_{N}^{\sample} \bigr] - Z \leq Z-C_3^{-1} \implies C_3^{-1}\leq \EE_{\nu_{N}} \bigl[ Z_{N}^{\sample} \bigr]
	\end{equation*}
	and 
	\begin{equation*}
		Z-C_3\leq \EE_{\nu_{N}} \bigl[ Z_{N}^{\sample} \bigr] - Z \leq C_3-Z \implies \EE_{\nu_{N}} \bigl[ Z_{N}^{\sample} \bigr] \leq C_3,
	\end{equation*}
	and combining both the implied statements yields assumption (\ref{item:hell_conv_marg_c}) of Theorem~\ref{thm:hell_conv_marg}; thus \eqref{eq:hell_conv_marg_result} holds, as desired.
	
	Now note that \eqref{eq:lower_bounds_on_potentials_Phi_PhiN} implies that assumption (\ref{item:hell_conv_rand_a}) of Theorem~\ref{thm:hell_conv_rand} holds with $q_1=q_2=+\infty$ and $D_1=4\exp(C_0)$.
	Furthermore, \eqref{eq:lower_bounds_on_potentials_Phi_PhiN} also implies that $Z_{N}^{\sample}=\EE_{\mu_{0}}[\exp(-\Misfit_{N})]\leq \exp(C_0)$ for all $\Misfit_{N}$.
	Thus, given that $Z$ is $\nu_{N}$-a.s.\ constant, and given that there exists some $0<C_3<\infty$ such that $C_3^{-1}<Z<C_3$,
	\begin{align}
		&\EE_{\nu_{N}} \left[ \bigl( Z_{N}^{\sample} \bigr)^{q_1} \max \bigl\{ Z^{-3}, \bigl( Z_{N}^{\sample} \bigr)^{-3} \bigr\}^{q_1} \big(\exp( -\Misfit(u) ) + \exp ( - \Misfit_{N}(u) ) \big)^{2q_1} \right]^{1/q_1}
		\notag
		\\
		& \quad \leq 4\exp(3C_0)\EE_{\nu_{N}} \left[ \max \bigl\{ C_3^{-3}, \bigl( Z_{N}^{\sample} \bigr)^{-3} \bigr\}^{q_1}\right]^{1/q_1}.
		\label{eq:prelim02}
	\end{align}
	A necessary and sufficient condition for setting $q_1=+\infty$ above (and therefore also in assumption (\ref{item:hell_conv_rand_b}) of Theorem~\ref{thm:hell_conv_rand}) is that $Z_{N}^{\sample}$ is $\nu_{N}$-a.s.\ bounded away from zero by a constant that does not depend on $N$.
	By the convexity and monotonicity of $x \mapsto \exp(x)$,
	\begin{equation*}
		Z_{N}^{\sample} = \EE_{\mu_{0}}\left[\exp(-\Misfit_{N})\right]\geq \exp \left(\EE_{\mu_{0}}\left[-\Misfit_{N}\right]\right)\geq \exp(-C_4),
	\end{equation*}
	for $C_4$ as in \eqref{eq:means_of_PhiN_uniformly_bounded}.
	In particular, if \eqref{eq:means_of_PhiN_uniformly_bounded} holds, then so does assumption (\ref{item:hell_conv_rand_b}) of Theorem~\ref{thm:hell_conv_rand}, with $q_1=q_2=+\infty$ and $D_2=4\exp(3C_0)\max\{C_3^{-3},\exp(3C_4)\}$, by inequality \eqref{eq:prelim02}.
\end{proof}

\begin{proof}[Proof of Lemma~\ref{lemma:goodsuffcon_for_hell_conv_thms_for_asmp_set_2}]	
	The proof proceeds in the same way as the proof of Lemma~\ref{lemma:goodsuffcon_for_hell_conv_thms_for_asmp_set_1}, with the exception that we need to prove that the assumption that $\EE_{\nu_{N}}[\exp(\rho^{\ast}\Misfit_{N})]\in L^1_{\mu_{0}}$ for some $\rho^{\ast}> 2$ implies that assumption (\ref{item:hell_conv_marg_a}) of Theorem~\ref{thm:hell_conv_marg} and assumption (\ref{item:hell_conv_rand_b}) of Theorem~\ref{thm:hell_conv_rand} hold with the stated parameters.
	Therefore, the proof will only concern these two assertions.
	Since $x\mapsto x^{-t}$ is strictly convex on $\mathbb{R}_{>0}$ for any $t>0$, Jensen's inequality yields that $\norm{ \EE_{\nu_{N}}[\exp(-\Misfit_{N})]^{-1} }_{L^t_{\mu_{0}}} \leq \norm{ \EE_{\nu_{N}}[\exp(t\Misfit_{N})] }^{1/t}_{L^1_{\mu_{0}}}$.
	Therefore, setting $t=\rho^{\ast}$, we find that assumption (\ref{item:hell_conv_marg_a}) of Theorem~\ref{thm:hell_conv_marg} holds, with $p_1 = \rho^{\ast}$ and $C_1 = \norm{ \EE_{\nu_{N}}[\exp(\rho^{\ast}\Misfit_{N})] }^{1/\rho^{\ast}}_{L^1_{\mu_{0}}}$.
	The inequality $\max\{x,y\}\leq x+y$ for $x,y\geq 0$ implies that
	\begin{align*}
		&\EE_{\nu_{N}} \left[ \max \bigl\{ Z_{N}^{\sample} Z^{-3}, \bigl( Z_{N}^{\sample} \bigr)^{-2} \bigr\}^{q_1} \big(\exp( -\Misfit(u) ) + \exp ( - \Misfit_{N}(u) ) \big)^{2q_1} \right]^{1/q_1}
		\\
		&\leq 4\exp(2C_0)\left(C_3^{-3}\exp(C_0)+\EE_{\nu_{N}}\left[\left(Z^{\sample}_N\right)^{-2q_1}\right]^{1/q_1}\right),
	\end{align*}
	while Jensen's inequality, Tonelli's theorem, and the definition of the $L^1_{\mu_{0}}$-norm yield that 
	\begin{align*}
		\EE_{\nu_{N}}[(Z^{\sample}_N)^{-2q_1}]
		& \leq \EE_{\nu_{N}} \left[ \EE_{\mu_{0}}\left[\exp(2q_1\Misfit_{N}) \right] \right] \\
		& = \EE_{\mu_{0}} \left[ \EE_{\nu_{N}} \left[ \exp(2q_1\Misfit_{N}) \right] \right] \\
		& = \Norm{\EE_{\nu_{N}} \left[ \exp(2q_1\Misfit_{N}) \right]}_{L^1_{\mu_{0}}}.
	\end{align*}
	Since the last term is finite for $q_1\leq \rho^\ast/2$ by the hypothesis that $\EE_{\nu_{N}}[\exp(\rho^{\ast}\Misfit_{N})]\in L^1_{\mu_{0}}$, it follows that
	assumption (\ref{item:hell_conv_rand_b}) of Theorem~\ref{thm:hell_conv_rand} holds with the parameters $q_1= \rho^{\ast}/2$, $q_2=+\infty$, and the scalar $D_2=4\exp(2C_0)(C_3^{-3}\exp(C_0)+\norm{ \EE_{\nu_{N}}[\exp(\rho^{\ast} \Misfit_{N})] }^{2/\rho^{\ast}}_{L^1_{\mu_{0}}})$.
\end{proof}

\begin{proof}[Proof of Proposition~\ref{proposition:bound_on_misfiterror_intermsof_forwardmodelerror}]
	Recall \eqref{eq:quadratic_misfits}, and fix an arbitrary $u\in\UU$.
	We have
	\begin{align}
		\bigabsval{ \Misfit(u) - \Misfit_{N}(u) }
		%\notag\\
		%&= \frac{1}{2} \Absval{ \Innerprod{\Gamma^{-1/2}(G(u)-y)}{\Gamma^{-1/2}(G(u)-y)}-\Innerprod{\Gamma^{-1/2}(G_{N}(u)-y)}{\Gamma^{-1/2}(G_{N}(u)-y)} }
		%\notag
		%\\
		&= \frac{1}{2} \Absval{ \Innerprod{G(u)-y)}{\Gamma^{-1}(G(u)-y)}-\Innerprod{G_{N}(u)-y}{\Gamma^{-1}(G_{N}(u)-y)} }\notag.
	\end{align}
	Adding and subtracting $\Innerprod{G_{N}(u)-y)}{\Gamma^{-1}(G(u)-y)}$ inside the absolute value, rearranging terms, applying the Cauchy--Schwarz inequality, and letting $C_\Gamma$ be the largest eigenvalue of $\Gamma^{-1}$ yields
	\begin{align}
		\bigabsval{ \Misfit(u) - \Misfit_{N}(u) }
		%&= \frac{1}{2}\left\vert\Innerprod{\Gamma^{-1}(G(u)-y)}{(G(u)-y)-(G_{N}(u)-y)}+\Innerprod{\Gamma^{-1}(G_{N}(u)-y)}{(G(u)-y)-(G_{N}(u)-y)}\right\vert\notag
		%\\
		&= \frac{1}{2} \Absval{ \Innerprod{\Gamma^{-1}(G(u)-y)}{G(u)-G_{N}(u)}
		+\Innerprod{\Gamma^{-1}(G_{N}(u)-y)}{G(u)-G_{N}(u)} } \notag
		\\
		& = \frac{1}{2} \Absval{ \Innerprod{ G(u)-y + G_{N}(u) - y }{\Gamma^{-1} (G(u) - G_{N}(u)) } } \notag
		\\
		&\leq C_\Gamma \norm{G(u)+G_{N}(u)-2y} \norm{G(u)-G_{N}(u)}.
		\label{eq:prelim01}
	% 	& \leq C \max \{ \Misfit(u)^{1/2} , \Misfit_{N}(u)^{1/2} \} \norm{ G(u) - G_{N}(u) } %& & \text{by Cauchy--Schwarz} \\
	% 	& \leq C \bigl( \Misfit(u)^{1/2} + \norm{ G(u) - G_{N}(u) } \bigr) \norm{ G(u) - G_{N}(u) } %& & \text{by the triangle inequality.}
	\end{align}
	%where the last inequality follows by the Cauchy--Schwarz inequality and $C_\Gamma$ is the largest eigenvalue (by magnitude) of $\Gamma^{-1}$.
	By the triangle inequality,
	\begin{align*}
		\norm{G(u)+G_{N}(u)-2y} \leq 2\max\{\norm{G(u)-y},\norm{G_{N}(u)-y}\} = 2\max\{\Misfit(u)^{1/2},\Misfit_{N}(u)^{1/2}\},
	\end{align*}
	and the triangle inequality and \eqref{eq:quadratic_misfits} yield
	\begin{align*}
		\Misfit_{N}(u)^{1/2}
		& =2^{-1/2} \norm{G_{N}(u)-y} \\
		& =2^{-1/2} \norm{G(u)-y+G_{N}(u)-G(u)} \\
		& \leq 2^{-1/2}(2^{1/2}\Misfit(u)^{1/2}+\norm{ G_{N}(u) - G(u) }) \\
		& =\Misfit(u)^{1/2}+2^{-1/2} \norm{ G_{N}(u) - G(u) } .
	\end{align*}
	Together, these inequalities yield
	\begin{align*}
		\norm{G(u)-y+G_{N}(u)-y}&\leq 2(\Misfit(u)^{1/2}+2^{-1/2} \norm{ G_{N}(u) - G(u) }),
	\end{align*}
	and substituting the above into \eqref{eq:prelim01} yields
	\begin{align*}
		\bigabsval{ \Misfit(u) - \Misfit_{N}(u) }\leq 2C_{\Gamma}\left(\Misfit(u)^{1/2} \norm{ G_{N}(u) - G(u) }+\norm{G(u)-G_{N}(u)}^{2}\right),
	\end{align*}
	thus proving  \eqref{eq:ae_in_u_bound_absval_error_potentials_01}.
	Using \eqref{eq:gen_triangle_inequality} yields
	\begin{equation*}
		\bigabsval{\Misfit(u)-\Misfit_{N}(u)}^q\leq 2^{q-1}(2C_\Gamma)^q\left(\Misfit(u)^{q/2} \norm{ G_{N}(u) - G(u) }^q+\norm{G(u)-G_{N}(u)}^{2q}\right).
	\end{equation*}
	Now take expectations with respect to $\nu_{N}$:
	since $G$ and $\Misfit$ are constant with respect to $\nu_{N}$,
	\[
		\EE_{\nu_{N}} \bigl[ \bigabsval{\Misfit(u)-\Misfit_{N}(u)}^q \bigr]
		\leq (4C_\Gamma)^q \Bigl( \Misfit(u)^{q/2}\EE_{\nu_{N}} \bigl[ \norm{ G_{N}(u) - G(u) }^q \bigr] + \EE_{\nu_{N}} \bigl[\norm{G(u)-G_{N}(u)}^{2q} \bigr] \Bigr) ,
	\]
	and taking the $q$\textsuperscript{th} root of both sides proves \eqref{eq:ae_in_u_bound_absval_error_potentials_01b}.
\end{proof}

\begin{proof}[Proof of Corollary~\ref{corollary:bound_on_misfiterror_intermsof_forwardmodelerror}]
	%Taking the $s$\textsuperscript{th} power of both sides of the second inequality of Proposition~\ref{proposition:bound_on_misfiterror_intermsof_forwardmodelerror}, taking the expectation with respect to $\mu_{0}$, and using \eqref{eq:gen_triangle_inequality} with $N=2$ and $p=s\geq 1$, we obtain
	Taking the $L^s_{\mu_{0}}$ norm of both sides of the second inequality in Proposition~\ref{proposition:bound_on_misfiterror_intermsof_forwardmodelerror}, and applying \eqref{eq:gen_triangle_inequality} with $s/q\geq 1$, we obtain
	\begin{align*}
		&\bignorm{ \EE_{\nu_{N}}\left[\absval{ \Misfit - \Misfit_{N} }^q\right]^{1/q} }_{L^s_{\mu_{0}}}
		\\
		&\leq (4C_\Gamma)\EE_{\mu_{0}}\left[\left(\Misfit^{q/2}\EE_{\nu_{N}}\left[\norm{ G_{N} - G }^q\right]+\EE_{\nu_{N}}\left[\norm{ G_{N} - G }^{2q}\right]\right)^{s/q}\right]^{1/s}
		\\
		&\leq (4C_\Gamma)2^{1/q-1/s}\left(\EE_{\mu_{0}}\left[\Misfit(u)^{s/2}\EE_{\nu_{N}}\left[\norm{ G_{N} - G }^q\right]^{s/q}\right]+\EE_{\mu_{0}}\left[\EE_{\nu_{N}}\left[\norm{ G_{N} - G }^{2q}\right]^{s/q}\right]\right)^{1/s}.
% 		&\leq (4C_\Gamma)^s\EE_{\mu_{0}}\left[\left(\Misfit^{q/2}\EE_{\nu_{N}}\left[\norm{ G_{N} - G }^q\right]+\EE_{\nu_{N}}\left[\norm{ G_{N} - G }^{2q}\right]\right)^{s/q}\right]
% 		\\
% 		&\leq (4C_\Gamma)^s2^{(s/q)-1}\EE_{\mu_{0}}\left[\Misfit(u)^{s/2}\EE_{\nu_{N}}\left[\norm{ G_{N} - G }^q\right]^{s/q}+\EE_{\nu_{N}}\left[\norm{ G_{N} - G }^{2q}\right]^{s/q}\right].
	\end{align*}
	By the Cauchy--Schwarz inequality and Jensen's inequality,
	\begin{align*}
		\EE_{\mu_{0}}\left[\Misfit^{s/2}\EE_{\nu_{N}}\left[\norm{ G_{N} - G }^{q}\right]^{s/q}\right]&\leq \left(\EE_{\mu_{0}}\left[\Misfit^{s}\right]\EE_{\mu_{0}}\left[\EE_{\nu_{N}}\left[\norm{ G_{N} - G }^{q}\right]^{2s/q}\right]\right)^{1/2}
		\\
		&\leq\left(\EE_{\mu_{0}}\left[\Misfit^{s}\right]\EE_{\mu_{0}}\left[\EE_{\nu_{N}}\left[\norm{ G_{N} - G }^{2q}\right]^{s/q}\right]\right)^{1/2}.
	\end{align*}
	Since $0\leq a\leq 1 \implies a\leq a^{1/2}$, the hypotheses of the corollary and the preceding imply that
	\begin{align*}
		\bignorm{ \EE_{\nu_{N}}\left[\absval{ \Misfit - \Misfit_{N} }^q\right]^{1/q} }_{L^s_{\mu_{0}}} \leq (4C_\Gamma)2^{1/q - 1/s}\left(\EE_{\mu_{0}}\left[\Misfit^{s}\right]^{1/2}+1\right)^{1/s}
		\bignorm{ \EE_{\nu_{N}}\left[ \norm{ G_{N}-G }^{2q} \right]^{1/q} }_{L^s_{\mu_{0}}}^{1/2}.
	\end{align*}
	Since $2^{1/q-1/s}\leq 2^{1/q}\leq 2$, the proof is complete.
	%then it follows that for all $N\geq N^\ast$,
	%\begin{align*}
	% \EE_{\mu_{0}}\left[\EE_{\nu_{N}}\left[\vert \Misfit(u)-\Misfit_{N}(u)\vert^q\right]^s\right]&\leq (2C(\Gamma,q))^{s}\left( \EE_{\mu_{0}}\left[\Misfit^{qs}\right]^{1/2}+1\right)\EE_{\mu_{0}}\left[\EE_{\nu_{N}}\left[\Vert G_{N}(u)-G(u)\Vert^{2q}\right]^s\right]^{1/2}.
	%\end{align*}
	%Taking the $s$-th root of both sides yields
	%\begin{align*}
	% \left\Vert \EE_{\nu_{N}}\left[\vert \Misfit(u)-\Misfit_{N}(u)\vert^q\right]^{1/q}\right\Vert_{L^s_{\mu_{0}}}&\leq (2C(\Gamma,q))^{1/q}\left( \EE_{\mu_{0}}\left[\Misfit^{qs}\right]^{1/2}+1\right)^{1/s}\left\Vert\EE_{\nu_{N}}\left[\Vert G_{N}(u)-G(u)\Vert^{2q}\right]^{1/q}\right\Vert^{1/2}_{L^s_{\mu_{0}}}\qedhere.
	%\end{align*}
\end{proof}

\begin{proof}[Proof of Lemma~\ref{lemma:necessary_condition_of_convergence_of_forward_models}]
	Given \eqref{eq:quadratic_misfits}, we may choose the parameter $C_0$ in \eqref{eq:lower_bounds_on_potentials_Phi_PhiN} to be $C_0=0$.
	By Jensen's inequality, \eqref{eq:convergence_hypothesis_of_forward_models} implies \eqref{eq:boundedness_of_EmuEnu_N_Phi_N_errorsequence}.
\end{proof}

\begin{proof}[Proof of Theorem~\ref{thm:cvgce_of_random_measures}]
	We first verify that Assumption~\ref{asmp:goodsuffcon_for_hellconvthms} holds.
	Since $\Misfit$ and $\Misfit_{N}$ satisfy \eqref{eq:quadratic_misfits}, it follows that we may set $C_0=0$ in \eqref{eq:lower_bounds_on_potentials_Phi_PhiN}.
	Since we assume throughout that $0<Z=\EE_{\mu_{0}}[\exp(-\Misfit)]<\infty$, it follows that $\Misfit$ has moments of all orders, and hence belongs to $L^s_{\mu_{0}}$ for all $s\in\Naturals$.
	Therefore, given that \eqref{eq:convergence_hypothesis_of_forward_models} holds for $q,s\geq 1$, it follows from Jensen's inequality and Corollary~\ref{corollary:bound_on_misfiterror_intermsof_forwardmodelerror} that we can make $\norm{\EE_{\nu_{N}} [ \absval{\Misfit_{N}-\Misfit} ] }_{L^1_{\mu_{0}}}$ as small as desired.
	In particular, for any $0<C_3<+\infty$ that satisfies $C_3^{-1}<Z<C_3$, there exists a $N^\ast(C_3) \in \Naturals$ such that, for all $N \geq N^\ast(C_3)$, \eqref{eq:boundedness_of_EmuEnu_N_Phi_N_errorsequence} holds.

	The rest of the proof consists of applying Lemma~\ref{lemma:goodsuffcon_for_hell_conv_thms_for_asmp_set_1} or \ref{lemma:goodsuffcon_for_hell_conv_thms_for_asmp_set_2}, Corollary~\ref{corollary:bound_on_misfiterror_intermsof_forwardmodelerror} and Lemma~\ref{lemma:necessary_condition_of_convergence_of_forward_models}.

	\emph{Case (\ref{thm:cvgce_of_random_measures_1})}.
	The hypotheses in this case ensure that we may apply Lemma~\ref{lemma:goodsuffcon_for_hell_conv_thms_for_asmp_set_1}.
	Set $p_1=p^\ast$ and $p_2=p_3=+\infty$, so that $p_1'=(p^\ast)'=p^\ast/(p^\ast-1)$ and $p_2'=p_3'=1$.
	Substituting these exponents into \eqref{eq:hell_conv_marg_result_inequality} and applying Corollary~\ref{corollary:bound_on_misfiterror_intermsof_forwardmodelerror} with $s=2p_1'p_3'=2p^\ast/(p^\ast-1)$ and $q=p_2'=1$ (note that $s\geq q\geq 1$), we obtain
	\begin{align*}
		\dhh \bigl( \mu, \mu_{N}^{\marginal} \bigr) &\leq C \Norm{ \EE_{\nu_{N}} \bigl[ \absval{ \Misfit - \Misfit_{N} }\bigr] }_{L^{2p^\ast/(p^\ast-1)}_{\mu_{0}}(\UU)}\leq C \Norm{ \EE_{\nu_{N}} \left[ \norm{ G - G_{N} }^{2} \right] }_{L^{2p^\ast/(p^\ast-1)}_{\mu_{0}}(\UU)}^{1/2},
	\end{align*}
	where $C>0$ changes value between inequalities.
	Thus we have shown that \eqref{eq:hell_conv_marg_result_forward_model} holds with $r_1=1$ and $r_2=2p^\ast/(p^\ast-1)$.
	% Note that we cannot set $p^\ast=1$ unless we assume that $\EE_{\nu_{N}}[\norm{G_{N}-G}^{2}]\in L^\infty_{\mu_{0}}$ for every sufficiently large $N$.

	To prove that \eqref{eq:hell_conv_rand_result_forward_model} holds with the desired exponents, we again use Lemma~\ref{lemma:goodsuffcon_for_hell_conv_thms_for_asmp_set_1} to set $q_1=q_2=+\infty$, so that $q_1'=q_2'=1$.
	Substituting these exponents into \eqref{eq:hell_conv_rand_result}, and applying Corollary~\ref{corollary:bound_on_misfiterror_intermsof_forwardmodelerror} with $s=2q_2'=2$ and $q=2q_1'=2$, we obtain
	\begin{align*}
		\EE_{\nu_{N}} \left[ \dhh \bigl( \mu, \mu_{N}^{\sample} \bigr)^{2} \right]^{1/2} \leq D \Norm{ \EE_{\nu_{N}} \bigl[ \absval{ \Misfit - \Misfit_{N} }^{2} \bigr]^{1/2}}_{L^{2}_{\mu_{0}}}\leq D \Norm{ \EE_{\nu_{N}} \left[ \norm{ G - G_{N} }^{4} \right]^{1/2} }_{L^{2}_{\mu_{0}}(\UU)}^{1/2},
	\end{align*}
	where $D>0$ changes value between inequalities.
	Thus we have shown that \eqref{eq:hell_conv_rand_result_forward_model} holds with $s_1=s_2=2$.

	It remains to ensure that both the rightmost terms above converge to zero.
	Since \eqref{eq:convergence_hypothesis_of_forward_models} holds with $q=2$ and $s=2p^\ast/(p^\ast-1)$, the desired convergence follows from the nesting property of finite-measure $L^p$-spaces.
	Therefore, both $\mu^{\marginal}_N$ and $\mu^{\sample}_N$ converge to $\mu$ as claimed.

	\emph{Case (\ref{thm:cvgce_of_random_measures_2})}.
	Since the arguments in this case are the same as in the previous case, we only record the different material.

	The hypotheses ensure that we may apply Lemma~\ref{lemma:goodsuffcon_for_hell_conv_thms_for_asmp_set_2}.
	Set $p_1=\rho^{\ast}$ and $p_2=p_3=+\infty$, so that $(p_1)'=\rho^{\ast}/(\rho^{\ast}-1)$ and $p_2'=p_3'=1$.
	Substituting these exponents into \eqref{eq:hell_conv_marg_result_inequality} and applying Corollary~\ref{corollary:bound_on_misfiterror_intermsof_forwardmodelerror} with $s=2p_1'p_3'=2\rho^{\ast}/(\rho^{\ast}-1)$ and $q=p_2'=1$, we obtain 
	\begin{align*}
		\dhh \bigl( \mu, \mu_{N}^{\marginal} \bigr) &\leq C \Norm{ \EE_{\nu_{N}} \bigl[ \absval{ \Misfit - \Misfit_{N} }\bigr] }_{L^{2\rho^{\ast}/(\rho^{\ast}-1)}_{\mu_{0}}(\UU)}\leq C \Norm{ \EE_{\nu_{N}} \left[ \norm{ G - G_{N} }^{2} \right] }_{L^{2p^\ast/(p^\ast-1)}_{\mu_{0}}(\UU)}^{1/2},
	\end{align*}
	where $C>0$ changes value between inequalities.
	Thus we have shown that \eqref{eq:hell_conv_marg_result_forward_model} holds with $r_1=1$ and $r_2=2\rho^{\ast}/(\rho^{\ast}-1)$.

	To prove that \eqref{eq:hell_conv_rand_result_forward_model} holds with the desired exponents, we again use Lemma~\ref{lemma:goodsuffcon_for_hell_conv_thms_for_asmp_set_2} to set $q_1=\tfrac{\rho^{\ast}}{2}$ and $q_2=+\infty$, so that $q_1'=\rho^{\ast}/(\rho^{\ast}-2)$ and $q_2'=1$.
	Substituting these exponents into \eqref{eq:hell_conv_rand_result}, and applying Corollary~\ref{corollary:bound_on_misfiterror_intermsof_forwardmodelerror} with $s=2q_2'=2$ and $q=2q_1'=2\rho^{\ast}/(\rho^{\ast}-2)$, we obtain
	\begin{align*}
		\EE_{\nu_{N}} \left[ \dhh \bigl( \mu, \mu_{N}^{\sample} \bigr)^{2} \right]^{1/2} &\leq D \Norm{ \EE_{\nu_{N}} \bigl[ \absval{ \Misfit - \Misfit_{N} }^{2\rho^{\ast}/(\rho^{\ast}-2)} \bigr]^{(\rho^{\ast}-2)/(2\rho^{\ast})}}_{L^{2}_{\mu_{0}}}
		\\
		&\leq D\Norm{ \EE_{\nu_{N}} \bigl[ \norm{ G - G_{N} }^{4\rho^{\ast}/(\rho^{\ast}-2)} \bigr]^{(\rho^{\ast}-2)/(2\rho^{\ast})} }_{L^{2}_{\mu_{0}}(\UU)}^{1/2},
	\end{align*}
	where $D>0$ changes value between inequalities.
	Thus \eqref{eq:hell_conv_rand_result_forward_model} holds with $s_1=2\rho^{\ast}/(\rho^{\ast}-2)$ and $s_2=2$.
	Since \eqref{eq:convergence_hypothesis_of_forward_models} holds with $q=2\rho^{\ast}/(\rho^{\ast}-2)$ and $s=2\rho^{\ast}/(\rho^{\ast}-1)$, it follows from the nesting property of $L^p$-spaces defined on finite measure spaces that both
	\[
		\Norm{ \EE_{\nu_{N}} \left[ \norm{ G - G_{N} }^{2} \right] }_{L^{2p^\ast/(p^\ast-1)}_{\mu_{0}}(\UU)}^{1/2}
		\text{ and }
		\Norm{ \EE_{\nu_{N}} \left[ \norm{ G - G_{N} }^{4\rho^{\ast}/(\rho^{\ast}-2)} \right]^{(\rho^{\ast}-2)/(2\rho^{\ast})} }_{L^{2}_{\mu_{0}}(\UU)}^{1/2}
	\]
	converge to zero.
\end{proof}

\begin{proof}[Proof of Proposition~\ref{prop:ell_sparse_conv}] %We first consider the case where $\sigma$ follows the $\ell$-sparse distribution, and 
	We start by verifying the assumptions of Theorem~\ref{thm:hell_conv_rand}.
	First, since $\Misfit(u) \geq 0$ for all $u \in \UU$, and $\Misfit_{N}(u) \geq 0$ for all $u \in \UU$ and all $\{\sigma^{(i)}\}_{i=1}^N$, assumption (\ref{item:hell_conv_rand_a}) is satisfied for $q_1=q_2=\infty$.
	For assumption (\ref{item:hell_conv_rand_b}), we then have, for any $q_2\in [1,\infty]$,
	\begin{align*}
		&\Norm{ \Big(\EE_{\sigma} \left[ \bigl( Z_{N}^{\sample} \bigr)^{q_1} \max \{ Z^{-3}, \bigl( Z_{N}^{\sample} \bigr)^{-3} \}^{q_1} \big(\exp \big(-\Misfit(u)\big) + \exp \big(-\Misfit_{N}(u)\big)\big)^{2q_1} \right]^{1/q_1} }_{L^{q_2}_{\mu_{0}}(\UU)} \\
		& \quad \leq 4 \, \EE_{\sigma} \left[ \bigl( Z_{N}^{\sample} \bigr)^{q_1} \max \{ Z^{-3}, \bigl( Z_{N}^{\sample} \bigr)^{-3} \}^{q_1} \right]^{1/q_1} \\
		& \quad \leq 4 \left( Z^{-3q_1} \EE_{\sigma} \bigl[ \bigl( Z_{N}^{\sample} \bigr)^{q_1} \bigr] + \EE_{\sigma} \bigl[ \bigl( Z_{N}^{\sample} \bigr)^{-2q_1} \bigr] \right)^{1/q_1}.
	\end{align*}
	Since $\Misfit_{N}(u) \geq 0$ for all $u \in \UU$ and all $\{\sigma^{(i)}\}_{i=1}^N$, we have for any $q_1\in [1,\infty]$
	\begin{equation}\label{eq:proof1}
		\EE_{\sigma} \left[ \bigl( Z_{N}^{\sample} \bigr)^{q_1} \right]^{1/q_1} = \EE_{\sigma} \left[ \left(\int_{\UU} \exp(-\Misfit_{N}(u)) \, \rd \mu_{0} (u) \right)^{q_1} \right]^{1/q_1} \leq 1.
	\end{equation}
	Using the $\ell$-sparse distribution of $\sigma$, we further have $\absval{ \sigma^{(i)}_j } \leq \sqrt{s}$ and
	\begin{align*}
		\Misfit_{N}(u) = \frac{1}{2 N} \sum_{i = 1}^{N} \bigabsval{ {\sigma^{(i)}}^{\transpose} \bigl( \Gamma^{-1/2} ( y - G(u) ) \bigr) }^{2} 
		\leq \frac{s}{2} \bignorm{ \bigl( \Gamma^{-1/2} ( y - G(u) ) \bigr) }^{2} 
		= s \Misfit(u),
	\end{align*}
	which implies that $Z_{N}^{\sample} \geq Z_s = \int_\UU \exp(-s \Misfit(u)) \, \rd \mu_{0} (u)$.
	It follows that, for any $q_1\in [1,\infty]$,
	\[
		\EE_{\sigma} \bigl[ \bigl( Z_{N}^{\sample} \bigr)^{-2q_1} \bigr]^{1/q_1} \leq \EE_{\sigma} \left[ Z_s^{-2q_1} \right]^{1/q_1} = Z_s^{-2},
	\]
	and assumption (\ref{item:hell_conv_rand_b}) is hence also satisfied for $q_1=q_2=\infty$.
	Hence, by Theorem~\ref{thm:hell_conv_rand},
	\[
		\Bigl( \EE_{\sigma} \bigl[ \dhh \bigl( \mu, \mu_{N}^{\sample} \bigr)^{2} \bigr] \Bigr)^{1/2} \leq C \Norm{ \bigl( \EE_{\sigma} \bigl[ \absval{ \Misfit(u) - \Misfit_{N}(u) }^{2} \bigr] \bigr)^{1/2} }_{L^{2}_{\mu_{0}}(\UU)}.
	\]
	Using standard properties of Monte Carlo estimators (see e.g.\ \citet{RobertCasella:1999}), we have 
	\[
		\Bigl( \EE_{\sigma} \bigl[ \absval{ \Misfit(u) - \Misfit_{N}(u) }^{2} \bigr] \Bigr)^{1/2} = \sqrt{\frac{\Var_\sigma \bigr[ \frac{1}{2} \bigabsval{ \sigma^{\transpose} \Gamma^{-1/2} ( y - G(u) ) }^{2} \bigr]}{N}}.
	\]
	Now, using $\Var[X] = \EE[X^{2}] - \EE[X]^{2}$, $\left( \sum_{j=1}^{J} x_j \right)^4 \leq J^3 \sum_{j=1}^{J} x_j^4$, the linearity of expectation, the $\ell$-sparse distribution of $\sigma$, and $\norm{ x }_4 \leq \norm{ x }_2$, we have
	\begin{align*}
		0 
		& \leq \Var_\sigma \left[\frac{1}{2} \bigabsval{ \sigma^{\transpose} \Gamma^{-1/2} ( y - G(u) ) }^{2} \right] \\
		&= \EE_{\sigma} \left[ \frac{1}{4} \bigabsval{ \sigma^{\transpose} \Gamma^{-1/2} ( y - G(u) ) }^4 \right] - \EE_{\sigma} \left[ \frac{1}{4} \bigabsval{ \sigma^{\transpose} \Gamma^{-1/2} ( y - G(u) ) }^{2} \right]^{2} \\
		&= \EE_{\sigma} \left[ \frac{1}{4} \Absval{ \sum_{j=1}^{J} \sigma_j \bigl( \Gamma^{-1/2} ( y - G(u) ) \bigr)_j }^{4} \right] - \frac{1}{4} \bignorm{ \Gamma^{-1/2} ( y - G(u) ) }^{4} \\
		&\leq \frac{1}{4} J^3 \sum_{j=1}^{J} \EE_{\sigma} [\sigma_j^4] \bigl( \Gamma^{-1/2} ( y - G(u) ) \bigr)_j^4- \frac{1}{4} \bignorm{ \Gamma^{-1/2} ( y - G(u) ) }^{4} \\
		&= \frac{1}{4} J^3 \EE_{\sigma} [\sigma_j^4] \bignorm{ \Gamma^{-1/2} ( y - G(u) ) }_4^{4}- \frac{1}{4} \bignorm{ \Gamma^{-1/2} ( y - G(u) ) }^{4} \\
		&\leq \left( J^3 \EE_{\sigma} [\sigma_j^4] - 1\right) \Misfit(u)^{2}.
	\end{align*}
%	&= \EE_{\sigma} \left[ \frac{1}{2} | \sigma^{\transpose} \bigl( \Gamma^{-1/2} ( y - G(u) ) \bigr) |^{4} \right] - \frac{1}{2} \Norm{ \Gamma^{-1/2} ( y - G(u) ) }^{4} \\
	The claim \eqref{eq:ell_sparse_conv_1} now follows, with the choice of constant as in \eqref{eq:ell_sparse_conv_2}.
%	The proof for the case when $\sigma$ follows a Gaussian distribution is similar. By the same arguments, we again have that assumption (\ref{item:hell_conv_rand_a}) of Theorem~\ref{thm:hell_conv_rand} is satisfied for $q_1=q_2=\infty$, and \eqref{eq:proof1} holds also. We then have
%	\begin{align*}
%	\Misfit_{N}(u) &= \frac{1}{2 N} \sum_{i = 1}^{N} |{\sigma^{(i)}}^{\transpose} \bigl( \Gamma^{-1/2} ( y - G(u) ) \bigr) |^{2} \\
%		&\leq \frac{1}{2N} \sum_{i = 1}^{N} \Norm{{\sigma^{(i)}}}^{2} \Norm{\Gamma^{-1/2} ( y - G(u) ) }^{2} \\
%		&= \frac{\Misfit(u)}{N} \sum_{i=1}^N \Norm{{\sigma^{(i)}}}^{2}.
%	\end{align*}
%	Since the random variables $\sigma_j^{(i)}$ are i.i.d. $\mathcal N(0,1)$, we have $X \sim \sum_{i=1}^N \Norm{{\sigma^{(i)}}}^{2}\sim \text{Gamma}\left(\frac{NJ}{2}, \frac{2}{N}\right)$, and so $\mathbb E\left[ \exp(t X) \right] = \frac{1}{1-\frac{2t}{N}}\right)^{}$. Then
\end{proof}

%In the proof of Theorem \ref{thm:convergence_misfit_pn_ode}, we shall use Young's inequality for real numbers $a, b \geq 0$, H\"{o}lder conjugate exponents $r, s > 1$, and $\delta > 0$:
%\begin{equation}
%	\label{eq:Young}
%	ab \leq \frac{\delta}{r} a^r + \frac{1}{s \delta^{s/r}} b^s.
%\end{equation}

\begin{proof}[Proof of Theorem~\ref{thm:convergence_misfit_pn_ode}]
	Recall that $T_J$ is a set of time points in $[0,T]$, indexed by an index set $J$ with cardinality $\absval{ J } \in \Naturals$.
	In \eqref{eq:error_in_forward_operators}, we observed that
	\begin{equation*}
		\norm{ G_{N}(u) - G(u) } \leq \absval{ J } \sup_{0\leq t\leq T} \norm{ e(t; u) }_{\ell^d_2}.
	\end{equation*}
	Fix $\rho^{\ast}>2$.
	Omitting the argument $u$ of $\Misfit_{N}$, $\Misfit$, $G_{N}$ and $G$, we have
	\begin{align*}
		\exp \bigl(\rho^{\ast}\Misfit_{N}\bigr)
		& = \exp \bigl(\rho^{\ast}\bigl(\Misfit_{N}-\Misfit+\Misfit\bigr)\bigr) \\
		& \leq \exp \bigl(\rho^{\ast}\absval{\Misfit_{N}-\Misfit}+\rho^{\ast}\Misfit\bigr) \\
		& = \exp \bigl(\rho^{\ast}\absval{\Misfit_{N}-\Misfit}\bigr)\exp(\rho^{\ast}\Misfit)
		\\
		&\leq \exp \bigl(2\rho^{\ast} C_\Gamma\bigl(\Misfit^{1/2} \norm{ G_{N} - G }+\norm{G - G_{N}}^{2}\bigr)\bigr)\exp(\rho^{\ast}\Misfit)
		\\
		&\leq \frac{\exp(\rho^{\ast}\Misfit)}{2}\bigl[\exp \bigl(4\rho^{\ast} C_\Gamma \Misfit^{1/2}\norm{ G_{N} - G }\bigr)+\exp \bigl(4\rho^{\ast} C_\Gamma\norm{ G - G_{N} }^{2}\bigr)\bigr] ,
	\end{align*}
	where the last two inequalities follow from \eqref{eq:ae_in_u_bound_absval_error_potentials_01} and Young's inequality $a b \leq (a^{2} + b^{2}) / 2$ for $a, b \geq 0$.
	Using \eqref{eq:error_in_forward_operators}, we therefore obtain
	\begin{align*}
		\exp(\rho^{\ast}\Misfit_{N})
		& \leq 
		\frac{\exp(\rho^{\ast}\Misfit)}{2}\left[\exp \left(4\rho^{\ast} C_\Gamma \Misfit^{1/2} \absval{J}\sup_{0\leq t\leq T} \norm{e(t)}_{\ell^d_2}\right) \right. \\
		& \phantom{=} \quad \left. + \exp \left(4 \rho^{\ast} C_\Gamma\absval{J}^2\sup_{0\leq t\leq T} \norm{e(t)}^{2}_{\ell^d_2}\right)\right],
	\end{align*}
	where we note that we have suppressed the $u$-dependence of $e(t;u)$ and simply written $e(t)$.
	Since $\UU$ is compact and $S$ is continuous, it follows that $G$ and hence $\Misfit$ are continuous on $\UU$; by the extreme value theorem, $\Misfit$ is bounded on $\UU$, i.e.\ $\norm{ \Misfit }_{L^\infty_{\mu_{0}}(\UU)}$ is finite.
	Using this fact and taking expectations with respect to $\nu_{N}$ we obtain
	\begin{align*}
		\EE_{\nu_{N}}\left[\exp(\rho^{\ast}\Misfit_{N}(u))\right] \leq 
		\frac{\exp(\rho^{\ast}\norm{ \Misfit }_{L^\infty_{\mu_{0}}(\UU)})}{2} & \left(\EE_{\nu_{N}}\left[\exp \left(4\rho^{\ast} C_\Gamma \norm{ \Misfit }_{L^\infty_{\mu_{0}}(\UU)}^{1/2} \absval{J}\sup_{0\leq t\leq T} \norm{ e(t; u) }_{\ell^d_2}\right)\right]\right.
		\\
		&\quad\quad+\left.\EE_{\nu_{N}}\left[\exp \left(4 \rho^{\ast} C_\Gamma\absval{J}^2\sup_{0\leq t\leq T} \norm{ e(t; u) }^{2}_{\ell^d_2}\right)\right]\right).
	\end{align*}
	By Corollary~\ref{cor:finite_mgf_lipschitz_flow_continuous_time}, the two terms on the right-hand side are finite for every $u\in \UU$.
	Given the continuous dependence of the parameters of Assumptions~\ref{ass:exact_flow}, \ref{ass:numerical_flow}, and \ref{ass:p-R_regularity_condition_on_noise} on $u$, and given that $\UU$ is a compact subset of a finite-dimensional Euclidean space, it follows that the right-hand side can be bounded by a scalar that does not depend on any $u$.
	Hence, the function $u\mapsto\EE_{\nu_{N}}[\exp(\rho^{\ast}\Misfit_{N}(u))]$ belongs to $ L^\infty_{\mu_{0}}(\UU) \subset L^1_{\mu_0}(\UU)$, so that the first hypothesis of Theorem~\ref{thm:cvgce_of_random_measures}(\ref{thm:cvgce_of_random_measures_2}) holds.
	For the second hypothesis, observe that, since Assumption~\ref{ass:p-R_regularity_condition_on_noise} holds for $R=+\infty$, it follows that \eqref{eq:strong_error_sup_inside_continuous} holds for any $n\in\Naturals$, and thus \eqref{eq:convergence_hypothesis_of_forward_models} holds for any $q,s\geq 1$.
	Therefore the hypotheses of Theorem~\ref{thm:cvgce_of_random_measures}(\ref{thm:cvgce_of_random_measures_2}) are satisfied, and the desired conclusion follows from Theorem~\ref{thm:cvgce_of_random_measures}.
\end{proof}

\end{document}